\newcommand{\NN}{\ensuremath{\mathbb{N}}}
\newcommand{\PP}{\ensuremath{\mathbb{P}}}
\newcommand{\RR}{\ensuremath{\mathbb{R}}}
\renewcommand{\phi}{\varphi}
\newcommand{\eps}{\ensuremath{\epsilon}}
\newcommand{\bB}{\ensuremath{\mathcal{B}}}
\newcommand{\dD}{\ensuremath{\mathcal{D}}}
\newcommand{\fF}{\ensuremath{\mathcal{F}}}
\newcommand{\oO}{\ensuremath{\mathcal{O}}}
\newcommand{\tT}{\ensuremath{\mathcal{T}}}
\newtheorem{theorem}{Theorem}
\newtheorem{corollary}[theorem]{Corollary}
\newtheorem{definition}[theorem]{Definition}
\newtheorem{lemma}[theorem]{Lemma}
\newtheorem{remark}[theorem]{Remark}
\title[RDS for SEEs with fractional noise]
{Random dynamical systems for stochastic evolution equations driven by multiplicative fractional Brownian noise with Hurst parameters $H\in (1/3,1/2]$}
\subjclass[2000]{Primary: 60H15; Secondary: 60H05, 60G22, 26A33, 26A42.}
\keywords{Stochastic PDEs, fractional Brownian motion, pathwise solutions, rough path theory. \\
This work was partially supported by MTM2011-22411, FEDER founding (M.J.
Garrido-Atienza and B. Schmalfu{\ss}), and by NSF0909400 (K. Lu). }
\author{Mar\'{\i}a J. Garrido-Atienza}
\address[Mar\'{\i}a J. Garrido-Atienza]{Dpto. Ecuaciones Diferenciales y An\'alisis Num\'erico\\
Universidad de Sevilla, Apdo. de Correos 1160, 41080-Sevilla,
Spain} \email[Mar\'{\i}a J. Garrido-Atienza]{mgarrido@us.es}
\author{Kening Lu}
\address[Kening Lu]{346 TMCB\\
Brigham Young University, Provo, UT 84602, USA} \email[Kening
Lu]{klu@math.byu.edu}
\author{Bj{\"o}rn Schmalfu{\ss }}
\address[Bj{\"o}rn Schmalfu{\ss }]{Institut f\"{u}r Stochastik\\
Friedrich Schiller Universit{\"a}t Jena, Ernst Abbe Platz 2, 77043\\
Jena,
Germany\\
 }
\email[Bj{\"o}rn Schmalfu{\ss }]{bjoern.schmalfuss@uni-jena.de}
\subjclass[2000]{Primary: 60H15; Secondary: 60H05, 60G22, 26A33, 26A42.}
\keywords{Stochastic PDEs, Hilbert-valued fractional Brownian motion, pathwise solutions. \\
This work was partially supported by MTM2011-22411, FEDER founding (M.J.
Garrido-Atienza and B. Schmalfu\ss), and by NSF0909400 (K. Lu). }
\begin{document}

\begin{abstract}
We consider the stochastic evolution equation
\begin{equation*}
  du=Audt+G(u)d\omega,\quad u(0)=u_0
\end{equation*}
in a separable Hilbert--space $V$. Here $G$ is supposed to be three times Fr\'echet--differentiable and $\omega$ is a trace class fractional Brownian--motion with Hurst parameter $H\in (1/3,1/2]$. We prove the existence of a global solution where exceptional sets are independent of the initial state $u_0\in V$.
In addition, we show that the above equation generates a random dynamical system.

\end{abstract}

\maketitle

\section*{\today}
\section{Introduction}\label{s0}
This paper can be seen as the second part of an outgoing project whose main aim is to prove that stochastic evolution equations (SEEs)
\begin{equation}\label{grds_eq01}
  du=Audt+G(u)\,d\omega,\qquad u(0)=u_0
\end{equation}
 on a separable Hilbert--space $V$ generate random dynamical systems. Our interpretation of the solution will be given in a mild form
\begin{equation}\label{grds_eq011}
  u(t)=S(t)u_0+\int_0^tS(t-r)G(u(r))d\omega(r)
\end{equation}
assuming that $A$ is a negative operator that generates a semigroup $S$ and $G$ is sufficiently regular.
Our purpose is to study this problem if $\omega$ is H{\"o}lder--continuous on compact intervals $[0,T]$. If $\omega$ is a regular trajectory, say
$\omega^\prime \in L_\infty(0,T,V)$, then a classical formulation of the above problem is given by
\begin{equation}\label{grds_eq10}
   u(t)=S(t)u_0+\int_0^tS(t-r)G(u(r))\omega^\prime(r)dr.
\end{equation}
If $S$ is an analytic semigroup, under typical assumptions on $G$ there exists a unique solution $u\in C_{\gamma}([0,T];V)$ if $u_0\in D((-A))$, where $\gamma\in (0,1)$, whereas $u\in C_{\gamma,\sim}([0,T];V)$ if $u_0\in V$. This latter space is a suitable modification of the space of $\gamma$--H\"older--continuous functions and will be introduced in Section 2.

\smallskip

So far, the existence of a {\it local} mild solution to (\ref{grds_eq011}) has been obtained in \cite{GaLuSch}, assuming that $u_0\in V$. To be more precise, this solution is derived as the path component of a path--area solution pair $(u,v)$ of a system that considers not only the original equation but also a second equation, a natural extension of the area object $(u\otimes \omega)$. This pair $(u,v)$ is related to the noisy path $\omega$ by the well-known Chen property. The method presented in \cite{GaLuSch} can be considered as a generalization of the one developed by Hu and Nualart \cite{HuNu09} in the finite--dimensional setting, but with some important differences as that in the infinite-dimensional framework it is necessary to construct an area element, denoted by $(\omega\otimes_S \omega)$, depending on $\omega$ as well as the semigroup $S$, that satisfies nice properties. The considered (pathwise) integral in \cite{GaLuSch} was also previously introduced in \cite{HuNu09} for ordinary differential equation with H{\"o}lder--continuous noise term. The reasons for which we have chosen such a pathwise integral will be detailed below.  However, another definition to integrate against the fractional Brownian motion would be to consider the Rough Path Theory, see for instance \cite{DGT}, \cite{DeNeTi10} and the references therein.
\smallskip

In the current paper we want to go one step further and establish the existence of a global solution to (\ref{grds_eq011}). In comparison to \cite{GaLuSch}, we here replace the regular initial condition of that paper by a less regular one, namely $u_0\in D((-A)^\kappa)$ for suitable $\kappa>0$, and make a slightly modification of the phase spaces, but that still ensures the additivity of the pathwise integral. Then, under additional regularity conditions on $G$ we prove the existence of a global solution to (\ref{grds_eq011}).  Let us mention that the estimates of the different integrals defining the path--area formulation of our problem are quadratic, and therefore, in a first step we are only able to establish the existence of a local solution $(u^1,v^1)$ such that the path component $u^1$ is defined on a time interval $[T_0,T_1]$. However, since in particular $u^1(T_1)$ also belongs to $D((-A)^\kappa)$, we can pick it as a new initial condition, and hence we get a new solution $(u^2,v^2)$ such that $u^2$ is now defined on a time interval $[T_1,T_2]$. Proceeding in a similar way, after a finite number of these steps one finally derives the existence of a global solution on $[T_0,T]$ for a given $T>0$ provided that $u_0\in D((-A)^\kappa)$. The way in which we finally can establish the existence of a global solution is based in a concatenation procedure. Furthermore, this method also provides the existence of a global solution when $u_0 \in V$, since roughly speaking what one has to do is to concatenate the local solution obtained in \cite{GaLuSch} with the aforementioned global one.

\smallskip

Once the existence of a global solution is solved, it is quite natural to study whether that solution generates a random dynamical, which turns out to be a powerful property to analyze the asymptotic behavior of the solution by using all the machinery of the random dynamical systems theory. The reader is referred to \cite{Arn98} for a complete description of that theory.

The fact that an ordinary It\^o--equation
\begin{equation*}
  du=G(u)d\omega,\,u(0)=u_0\in \RR^d
\end{equation*}
where $\omega$ here is a finite dimensional Brownian motion,
generates a random dynamical system is due to the flow property,  see Kunita \cite{Kun90}.
To obtain this flow property one needs to apply Kolmogorov's theorem about the existence of a continuous random field
with finite--dimensional parameter range. Scheutzow in \cite{Sche96} derived the existence of a random dynamical system from this flow property.

\smallskip
Results on the existence and uniqueness for It\^o--SEEs having as a state space an infinite dimensional separable Banach--space are established in Da Prato and Zabczyk \cite{DaPrato}. The point is that, unfortunately, it is not known how to obtain stochastic flows for these It\^o--SEEs, and the main reason is because it is not known how to extend Kolmogorov's theorem to an infinite dimensional parameter range, which would be appropriate for dealing with SEEs. More precisely, solutions of SEEs are defined almost surely where the exceptional sets depend on the initial condition. But it is complicated to generate a random dynamical system if more than countable many exceptional sets may appear. Nevertheless, there are results getting the random dynamical system for SEEs with very special noise terms, either SEEs driven by additive noise or linear multiplicative noise. These special noises make it possible to transform such  It\^o--SEE into a pathwise evolution equation which is appropriate to generate a random dynamical system.

\smallskip

The ansatz in this paper is quite different. Instead of considering the usual stochastic integral, for instance in the It\^o sense, as we have previously mentioned we consider a pathwise integral
which is well defined for {\em any} $\beta^\prime$--H{\"o}lder--continuous integrator ($1/3<\beta^\prime$) if the integrand is sufficiently regular. As we will explain below, this regularity can be described in terms of a modified space of H{\"o}lder--continuous functions. Furthermore, by the choice of that integral the unique solution has pathwise character, which means we can avoid exceptional sets depending on the initial states and making possible to investigate whether it generates a random dynamical system.

\smallskip
The article is organized as follows. In Section 2 we collect tools from functional analysis that will be applied later. In particular, we introduce analytic semigroups, special non-linear operators, tensor products and function spaces given by modification of H{\"o}lder--continuous functions. In Section 3 we at first define so called fractional derivatives allowing us define an integral with H{\"o}lder--continuous integrator. In addition we give the main properties of such an integral. In Section 4  we define the mild--path area solution of an SEE. In particular, we formulate an operator equation whose fixed point
presents this kind of (local) solution, and mention an approximation result. In Section 5 we investigate how the local solution can be extended to any interval $[0,T]$ by means of the concatenation method. For this type of solution we can avoid state dependent exceptional sets which then is the key to prove in Section 6 the existence of a random dynamical system generated by an SEE. Finally in Section 7 a stochastic partial differential equation is discussed as an example for an SEE generating a mild path--area solution.

\section{Preliminaries}\label{s1}
In the following we denote by $V,\,\hat V$ or $\tilde V$ separable Hilbert--spaces. Then as usual $L(V,\hat V)$ denotes the Banach--space of linear operators from $V$ to $\hat V$ and $L_2(V,\hat V)$ is the separable Hilbert--space of Hilbert--Schmidt--operators from $V$ to $\hat V$.

\smallskip

Let $S$ be an analytic semigroup on $V$ with generator $A$. $-A$ is supposed to be symmetric, positive and have an inverse which is compact. Then $-A$ has a discrete spectrum  $0<\lambda_1\le\lambda_2\le\cdots$
of finite multiplicity tending to $+\infty$. The associated eigenelements $(e_i)_{i\in\NN}$ are written such that they form a complete orthonormal system of $V$.
In addition, we can define the associated fractional powers of $-A$ denoted by $(-A)^\delta,\,\delta\in\RR$
with norm  $|x|_{D((-A)^{\delta})}:=|(-A)^{\delta}x|$, see Pazy \cite{Pazy} Section 2.6. We denote $V_\delta=D((-A)^\delta)$.

Collecting some properties of $S$ we have that
\begin{lemma}\label{grds_l1}
If $T>0$, $0\le \delta \leq \gamma$, $\sigma-\theta\in [0,1]$, then there exists a $c>0$ such that for $t\in [0,T]$
\begin{align}\label{eq1}
  &\|S(t)\|_{L(V_\delta, V_\gamma)}=  \|(-A)^\gamma S(t)\|_{L(V_\delta,V)}\le ct^{\delta-\gamma},
\end{align}
\begin{align}\label{eq2}
 &\|S(t)-{\rm
id}\|_{L(V_{\sigma},V_{\theta})} \le c
t^{\sigma-\theta}.
\end{align}

\smallskip

From these two properties, one can easily deduce that for any $\nu, \mu \in [0,1]$, $0\le\delta \leq \gamma+\nu$, $0\le \rho$ and $T>0$ there exists a constant $c>0$ such that for $0 \leq q<r< s<t\le T$ we have that
\begin{align*}
\|S(&t-r)-S(t-q)\|_{L(V_{\delta},V_{\gamma})}\le c(r-q)^\nu(t-r)^{-\nu-\gamma+\delta},\\
\|S(&t-r)- S(s-r)-S(t-q)+S(s-q)\|_{L(V_{\rho})}\\&\leq
c(t-s)^{\mu}(r-q)^{\nu}(s-r)^{-(\nu+\mu)}.
\end{align*}
\end{lemma}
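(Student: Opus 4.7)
The proof rests on three standard facts about an analytic semigroup whose generator has selfadjoint positive compact resolvent: (a) the fractional powers $(-A)^\alpha$ commute with $S(t)$ on their natural domains; (b) the classical decay estimate $\|(-A)^\alpha S(t)\|_{L(V)}\le C_\alpha\, t^{-\alpha}$ for $\alpha\ge 0$ and $t\in (0,T]$ (Pazy, Theorem 2.6.13); and (c) the spectral representation $S(t)e_i=e^{-\lambda_i t}e_i$ against the orthonormal eigenbasis.

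For (\ref{eq1}), the equality of norms is a change-of-variables: for $x\in V_\delta$ setting $y=(-A)^\delta x$ gives $|x|_{V_\delta}=|y|_V$, and by commutativity $(-A)^\gamma S(t)x=(-A)^{\gamma-\delta}S(t)y$. Thus $\|S(t)\|_{L(V_\delta,V_\gamma)}=\|(-A)^{\gamma-\delta}S(t)\|_{L(V)}$, and (b) applied with the exponent $\gamma-\delta\ge 0$ yields the bound $c\,t^{\delta-\gamma}$. For (\ref{eq2}), expand $x=\sum_i c_i e_i$ and compute
\[
|(S(t)-\mathrm{id})x|_{V_\theta}^2=\sum_i \lambda_i^{-2(\sigma-\theta)}\bigl(1-e^{-\lambda_i t}\bigr)^2\lambda_i^{2\sigma}c_i^2.
\]
The elementary inequality $1-e^{-s}\le s^{\sigma-\theta}$ for $s\ge 0$ and $\sigma-\theta\in[0,1]$ (verified by splitting $s\le 1$, where $1-e^{-s}\le s\le s^{\sigma-\theta}$, and $s>1$, where $1-e^{-s}\le 1\le s^{\sigma-\theta}$) gives $\lambda_i^{-(\sigma-\theta)}(1-e^{-\lambda_i t})\le t^{\sigma-\theta}$, so the sum is bounded by $t^{2(\sigma-\theta)}|x|_{V_\sigma}^2$.

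The two derived estimates are obtained by factoring the differences with the semigroup law and then chaining (\ref{eq1}) and (\ref{eq2}) through intermediate spaces. For the first difference, write
\[
S(t-r)-S(t-q)=\bigl(\mathrm{id}-S(r-q)\bigr)S(t-r)
\]
(using $t-q=(r-q)+(t-r)$) and bound it through $V_\delta\to V_{\gamma+\nu}\to V_\gamma$: (\ref{eq1}) applied with exponents $\delta\le\gamma+\nu$ yields $\|S(t-r)\|_{L(V_\delta,V_{\gamma+\nu})}\le c(t-r)^{\delta-\gamma-\nu}$, and (\ref{eq2}) with $\nu\in[0,1]$ yields $\|\mathrm{id}-S(r-q)\|_{L(V_{\gamma+\nu},V_\gamma)}\le c(r-q)^\nu$; multiplying proves the claim. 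For the second (double) difference, successive factorizations give
\[
S(t-r)-S(s-r)-S(t-q)+S(s-q)=\bigl(S(t-s)-\mathrm{id}\bigr)\bigl(\mathrm{id}-S(r-q)\bigr)S(s-r),
\]
which is estimated through the chain $V_\rho\to V_{\rho+\mu+\nu}\to V_{\rho+\mu}\to V_\rho$: (\ref{eq1}) produces $(s-r)^{-(\mu+\nu)}$, and two applications of (\ref{eq2}) produce $(r-q)^\nu$ and $(t-s)^\mu$.

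There is no deep obstacle here; the content is bookkeeping. The only point of care is maintaining compatible index constraints along each composition chain, in particular verifying that the hypothesis $\delta\le\gamma+\nu$ is precisely what is needed so that the $V_\delta\to V_{\gamma+\nu}$ step of the first chain is admissible under (\ref{eq1}), and that the intermediate indices $\rho+\mu,\rho+\mu+\nu$ are $\ge\rho\ge 0$ in the second chain.
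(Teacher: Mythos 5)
Your proof is correct and complete; the paper itself states Lemma \ref{grds_l1} without proof (referring only to Pazy for the fractional powers), and your derivation is exactly the standard argument the authors have in mind: the smoothing estimate $\|(-A)^{\gamma-\delta}S(t)\|_{L(V)}\le ct^{\delta-\gamma}$ plus the spectral bound $1-e^{-s}\le s^{\sigma-\theta}$ for the two basic inequalities, and the factorizations $S(t-r)-S(t-q)=(\mathrm{id}-S(r-q))S(t-r)$ and $(S(t-s)-\mathrm{id})(\mathrm{id}-S(r-q))S(s-r)$ chained through the intermediate spaces $V_{\gamma+\nu}$ and $V_{\rho+\mu+\nu}\to V_{\rho+\mu}\to V_\rho$ for the two consequences. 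All index constraints are verified correctly, so there is nothing to add.
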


Denote by $V\otimes V$ the separable Hilbert--space of tensor products of $V$, see Kadinson and Ringrose \cite{KadRin97}.
Moreover, for $x,\,y\in V$ the element $(x\otimes_V y)$ denotes the rank--one tensor product. In particular,
$((e_i\otimes_V e_j))_{i,j\in\NN}$ is a complete orthonormal system for $V\otimes V$
if $(e_i)_{i\in\NN}$ is a complete  orthonormal system  of $V$.

Consider a bilinear operator $B\in L_2(V\times V,\hat V)$ such that
\begin{equation}\label{grds_eq1}
  \sum_{i,j}|B(e_i,e_j)|_{\hat V}^2<\infty.
\end{equation}
Then it is possible to extend $B$ to a linear continuous operator $\hat B\in L_2(V\otimes V,\hat V)$.
More precisely, there exists a weak Hilbert--Schmidt--mapping $p:V\times V\to V\otimes V$ such that
$p(e_i,e_j)=(e_i\otimes_V e_j)$ and $\hat B$ is determined by factorization: $B=\hat B p$, and
\begin{equation*}
   \|\hat B\|_{L_2(V\otimes V,\hat V)}^2:= \sum_{i,j}|\hat B(e_i\otimes_V e_j)|_{\hat V}^2=\sum_{i,j}|B(e_i,e_j)|_{\hat V}^2= \|B\|_{L_2(V\times V,\hat V)}^2.
\end{equation*}
For these properties we refer to \cite{KadRin97}.
In the following we will write for  $\hat B$  the symbol of the original $B$.

\smallskip

Let $\hat V$ be a subspace of $V$. Consider now $G$ to be a Fre\'chet-differentiable mapping
\begin{equation*}
  V\ni u\mapsto G(u)\in L_2(V,\hat V)
\end{equation*}
with derivative
\begin{equation*}
  V\ni u\mapsto DG(u)\in L_2(V,L_2(V,\hat V))\cong L_2(V\times V,\hat V),
\end{equation*}
and therefore $DG(u)$ can be interpreted as an element of $L_2(V\otimes V,\hat V)$.\\

In what follows $|\cdot|$ represents the norm of $V$.
\smallskip

Let us formulate some  $G$.

\begin{lemma}\label{locall2}
Assume that the mapping $G: V\to L_2(V,\hat V )$ is bounded and three times continuously
Fr\'echet--differentiable with bounded first, second and third
derivatives $DG(u)$, $D^2G(u)$ and $D^3G(u)$, for $u\in V$. Let us denote, respectively, by $c_G$, $c_{DG},\, c_{D^2G}$ and $c_{D^3G}$
the bounds for $G$, $DG$, $D^2G$ and $D^3G$.
Then, for $u_1,\,u_2,\,v_1,\,v_2\in V$, we have
\begin{itemize}
\item $\|G(u_1)\|_{L_2(V,\hat V)}\le c_G$,\\
\item $\|G(u_1)-G(v_1)\|_{L_2(V, \hat V)}\le c_{DG}|u_1-v_1|$,\\
\item  $\|DG(u_1)-DG(v_1)\|_{L_2{(V\times V,\hat V)}}\le c_{D^2G}|u_1-v_1|$,\\
\item  $\|G(u_1)-G(u_2)-DG(u_2)(u_1-u_2)\|_{L_2(V, \hat V)}\le c_{D^2G}|u_1-u_2|^2$,\\
\item  $\|G(u_1)-G(v_1)-(G(u_2)-G(v_2))\|_{L_2(V, \hat V)}\le c_{DG}|u_1-v_1-(u_2-v_2)|\\  \qquad +c_{D^2G} |u_1-u_2|(|u_1-v_1|+|u_2-v_2|)$,\\
 \item $\|DG(u_1)-DG(v_1)-(DG(u_2)-DG(v_2))\|_{L_2(V\times V,\hat V)}\\
 \qquad  \le  c_{D^2G}|u_1-v_1-(u_2-v_2)|+c_{D^3G} |u_1-u_2|(|u_1-v_1|+|u_2-v_2|),$\\
\item   $\|G(u_1)-G(u_2)-DG(u_2)(u_1-u_2)-(G(v_1)-G(v_2)-DG(v_2)(v_1-v_2))\|_{L_2(V, \hat V)}\\
  \qquad \le c_{D^2G}
    (|u_1-u_2|+|v_1-v_2|)|u_1-v_1-(u_2-v_2)|\\
    +c_{D^3G}|v_1-v_2||u_2-v_2|(|u_1-u_2|+|u_1-v_1-(u_2-v_2)|).$
\end{itemize}
\end{lemma}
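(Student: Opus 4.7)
The seven estimates are standard consequences of the fundamental theorem of calculus and Taylor's formula with integral remainder, applied in the Banach-space-valued setting, together with the uniform bounds $c_G, c_{DG}, c_{D^2G}, c_{D^3G}$. I would prove them in the order stated, since each refines the previous.

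Items 1--4 are immediate. Item 1 is the hypothesis. For item 2, the fundamental theorem of calculus gives
\begin{equation*}
G(u_1) - G(v_1) = \int_0^1 DG(v_1 + t(u_1-v_1))(u_1-v_1)\, dt,
\end{equation*}
and the uniform $c_{DG}$-bound on $DG$ yields the estimate. Item 3 is the same argument for $DG$ using the $c_{D^2G}$-bound on $D^2G$. Item 4 is the standard first-order Taylor estimate: subtracting $DG(u_2)(u_1-u_2)$ from the identity above (applied between $u_2$ and $u_1$) produces
\begin{equation*}
G(u_1)-G(u_2)-DG(u_2)(u_1-u_2) = \int_0^1 [DG(u_2 + t(u_1-u_2)) - DG(u_2)](u_1-u_2)\, dt,
\end{equation*}
and the $c_{D^2G}$-Lipschitz property of $DG$ makes the integrand bounded by $c_{D^2G}\,t\,|u_1-u_2|^2$; integrating in $t$ completes the proof.

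For items 5 and 6 I would use the ``rectangle'' decomposition. Writing both $G(u_1)-G(u_2)$ and $G(v_1)-G(v_2)$ as line integrals of $DG$, subtracting, and splitting the resulting integrand as
\begin{equation*}
DG(\xi_t)\bigl[(u_1-u_2)-(v_1-v_2)\bigr] + \bigl[DG(\eta_t) - DG(\xi_t)\bigr](u_1-u_2),
\end{equation*}
with $\xi_t = v_2 + t(v_1-v_2)$ and $\eta_t = u_2 + t(u_1-u_2)$, the first summand contributes the $c_{DG}|u_1-v_1-(u_2-v_2)|$ term (using $(u_1-u_2)-(v_1-v_2) = (u_1-v_1)-(u_2-v_2)$), while the second, via $\|DG(\eta_t)-DG(\xi_t)\| \le c_{D^2G}|\eta_t-\xi_t| \le c_{D^2G}(|u_1-v_1|+|u_2-v_2|)$, yields the desired cross-term. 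Item 6 is the same argument with $G$ replaced by $DG$ and one higher derivative bound.

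The main obstacle is item 7. I would apply the second-order Taylor formula
\begin{equation*}
R(x,y) := G(x) - G(y) - DG(y)(x-y) = \int_0^1 (1-t)\, D^2G(y + t(x-y))(x-y, x-y)\, dt
\end{equation*}
to both $R(u_1,u_2)$ and $R(v_1,v_2)$, with interpolated base points $U_t = u_2 + ta$, $V_t = v_2 + tb$ and displacements $a = u_1-u_2$, $b = v_1-v_2$. The difference of the trilinear integrands then splits as
\begin{equation*}
D^2G(U_t)\bigl[(a,a)-(b,b)\bigr] + \bigl[D^2G(U_t) - D^2G(V_t)\bigr](b,b),
\end{equation*}
and the symmetry of $D^2G$ lets us write $(a,a)-(b,b) = (a-b,a) + (b,a-b)$, producing the $c_{D^2G}(|a|+|b|)|a-b|$ piece from the first summand. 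For the second summand I would use the $c_{D^3G}$-Lipschitz bound on $D^2G$ combined with the decomposition $U_t - V_t = (u_2-v_2) + t(a-b)$, isolating the factors $|u_2-v_2|$ and $|a-b|$ separately; the bound $|b| \le |a| + |a-b|$ is then applied to convert the resulting $|b|^2$-dependence into $|b|(|a|+|a-b|)$, producing the stated $c_{D^3G}|v_1-v_2||u_2-v_2|(|a|+|a-b|)$ term after integration in $t$ against $(1-t)$. The only delicate point is the bookkeeping needed to consolidate all sub-contributions into exactly the form stated, with numerical factors absorbed into the generic symbols $c_{D^2G}, c_{D^3G}$.
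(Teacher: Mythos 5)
Your treatment of items 1--6 is correct and is the standard argument; the paper itself gives no proof of this lemma, deferring to \cite{MasNua03} and, for the last item, to \cite{HuNu09}, p.~2716, so the only substantive question is whether your sketch actually closes item 7 --- and there it has a genuine gap. With $a=u_1-u_2$, $b=v_1-v_2$, $U_t=u_2+ta$, $V_t=v_2+tb$, your second summand is controlled by
\begin{equation*}
\|D^2G(U_t)-D^2G(V_t)\|\,|b|^2\le c_{D^3G}\bigl(|u_2-v_2|+t\,|a-b|\bigr)|b|^2 .
\end{equation*}
The $|u_2-v_2|\,|b|^2$ part does convert, via $|b|\le|a|+|a-b|$, into the stated $c_{D^3G}$ term, but the $t\,|a-b|\,|b|^2$ part does not, and this is not a matter of absorbing numerical factors: that contribution carries the constant $c_{D^3G}$ without the factor $|u_2-v_2|$ and is cubic in the increments, while the only cubic term permitted on the right-hand side is $c_{D^3G}|b|\,|u_2-v_2|(\cdots)$. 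The failure is visible already for $u_2=v_2$: the claimed bound then reduces to $c_{D^2G}(|a|+|b|)|a-b|$ with no third-derivative contribution at all, whereas your decomposition still leaves a term of order $c_{D^3G}|a-b|\,|b|^2$, and $|a-b|\,|b|^2$ is not dominated by $(|a|+|b|)|a-b|$.

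The repair (and, in substance, the argument on p.~2716 of \cite{HuNu09}) is to telescope through the intermediate point $u_2+b$ instead of differencing the two Taylor remainders along their own segments. Writing $\Phi(x,y)=G(x)-G(y)-DG(y)(x-y)$ and using $u_1=u_2+a$, $v_1=v_2+b$,
\begin{equation*}
\Phi(u_1,u_2)-\Phi(v_1,v_2)=\bigl[\Phi(u_2+a,u_2)-\Phi(u_2+b,u_2)\bigr]+\bigl[\Phi(u_2+b,u_2)-\Phi(v_2+b,v_2)\bigr].
\end{equation*}
The first bracket equals $\int_0^1\bigl[DG\bigl(u_2+(1-s)b+sa\bigr)-DG(u_2)\bigr](a-b)\,ds$ and is bounded by $c_{D^2G}(|a|+|b|)\,|a-b|$, which is exactly the first stated term. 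The second bracket is $\psi(u_2)-\psi(v_2)$ for $\psi(y)=G(y+b)-G(y)-DG(y)b$, whose derivative $D\psi(y)=DG(y+b)-DG(y)-D^2G(y)(b,\cdot)$ has norm at most $\tfrac12 c_{D^3G}|b|^2$ by Taylor's formula applied to $DG$; hence the second bracket is bounded by $\tfrac12 c_{D^3G}|b|^2\,|u_2-v_2|\le \tfrac12 c_{D^3G}|b|\,|u_2-v_2|\,(|a|+|a-b|)$, exactly the second stated term. The structural point your sketch misses is that the third derivative must only ever be paired with the constant base-point displacement $u_2-v_2$, never with $a-b$; keeping the base point fixed in the step where increments change, and the increment fixed in the step where base points change, achieves precisely that.
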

The proof of these properties is standard. We refer partially to \cite{MasNua03}, and for the last inequality to \cite{HuNu09}, page 2716.

\smallskip

Let us introduce some function spaces. For $\beta\in (0,1)$, denote by $C_\beta([0,T];V)$ the space of $\beta$-H{\"o}lder--continuous functions
with seminorm
\begin{equation*}
|||u|||_\beta=\sup_{0\leq s< t\leq  T}\frac{|u(t)-u(s)|}{(t-s)^\beta}.
\end{equation*}

In this space we could consider the usual norm
\begin{equation*}
\|u\|_{\beta}=\|u\|_C+|||u|||_\beta,\;  \text { with } \|u\|_C=\sup_{0\le s\le T}|u(s)|.
\end{equation*}
However, the above norm is  to be equivalent to the norm given by $|u(0)|+|||u|||_\beta$.
\\

A modification of the  space of H{\"o}lder--continuous functions is given by $C_{\beta,\sim}([0,T];V)$ with norm
\begin{equation*}
    ||u||_{\beta,\sim}=\|u\|_C+\sup_{0< s< t\leq  T}s^\beta\frac{|u(t)-u(s)|}{(t-s)^\beta}.
\end{equation*}

\begin{lemma}\label{grds_l2}
$C_{\beta,\sim}([0,T];V)$ is a Banach--space.
\end{lemma}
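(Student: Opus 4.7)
The plan is the standard completion argument for weighted Hölder spaces. First I would observe that $\|\cdot\|_{\beta,\sim}$ is indeed a norm: homogeneity and the triangle inequality are inherited from the defining sup, and $\|u\|_{\beta,\sim}=0$ forces $\|u\|_C=0$, hence $u\equiv0$. So the only content is completeness.

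Let $(u_n)$ be Cauchy in $C_{\beta,\sim}([0,T];V)$. Since $\|u\|_C\leq \|u\|_{\beta,\sim}$, the sequence is Cauchy in $C([0,T];V)$ and converges uniformly to some continuous $u:[0,T]\to V$. The key step is to transfer Cauchyness into the weighted Hölder seminorm. Fix $\varepsilon>0$ and choose $N$ such that $\|u_n-u_m\|_{\beta,\sim}<\varepsilon$ for all $n,m\ge N$. For any $0<s<t\le T$ and $n,m\ge N$,
\begin{equation*}
s^\beta\frac{\bigl|(u_n(t)-u_m(t))-(u_n(s)-u_m(s))\bigr|}{(t-s)^\beta}\le \|u_n-u_m\|_{\beta,\sim}<\varepsilon.
\end{equation*}
Letting $m\to\infty$ and using uniform (hence pointwise) convergence $u_m\to u$, the continuity of $|\cdot|$ in $V$ gives
\begin{equation*}
s^\beta\frac{\bigl|(u_n(t)-u(t))-(u_n(s)-u(s))\bigr|}{(t-s)^\beta}\le \varepsilon
\end{equation*}
uniformly in $0<s<t\le T$, for every $n\ge N$.

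Taking the supremum in $(s,t)$ and combining with $\|u_n-u\|_C\to 0$ (which follows from uniform convergence) yields $\|u_n-u\|_{\beta,\sim}\to 0$. Finally, writing $u=(u-u_N)+u_N$ and using that both summands have finite $\|\cdot\|_{\beta,\sim}$-norm, we conclude $u\in C_{\beta,\sim}([0,T];V)$. The only subtle point, which I expect to be the main place one must be careful, is the passage $m\to\infty$ inside the weighted seminorm: because the weight $s^\beta/(t-s)^\beta$ does not blow up for fixed $s,t>0$, the pointwise convergence $u_m\to u$ is sufficient and no Fatou-type argument on the supremum is needed beyond this pointwise step.
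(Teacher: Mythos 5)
Your proof is correct. The paper itself does not prove this lemma but defers to Chen \emph{et al.}\ \cite{ChGGSch12}; your argument is the standard one and works here precisely because the norm $\|u\|_{\beta,\sim}=\|u\|_C+\sup_{0<s<t\le T}s^\beta|u(t)-u(s)|/(t-s)^\beta$ dominates the sup-norm, so a Cauchy sequence already converges uniformly on all of $[0,T]$ and you can pass to the limit $m\to\infty$ pointwise in $(s,t)$ inside the weighted difference quotient before taking the supremum. It is worth contrasting this with the proof the paper \emph{does} give for the closely analogous Lemma \ref{grdsl2} on $C_{\beta+\beta^\prime,\sim}(\Delta_{0,T};V\otimes V)$: there the norm has no uniform-norm component and elements may be singular as $s\to 0$, so the authors must first extract a limit via uniform convergence on the compact exhaustion $K_n=\bar\Delta_{n^{-1},T}$ and only then verify membership and convergence in the weighted norm. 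Your direct route is the simpler one and is fully justified for the space in the statement; the only thing you might make explicit is that the limit $u$ is continuous as a uniform limit of continuous functions, which you implicitly use when asserting $u\in C_{\beta,\sim}([0,T];V)$.
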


The proof can be found in Chen {\it et al.} \cite{ChGGSch12}.\\

\smallskip
For the following we suppose that $0<\beta<\beta^\prime<1/2$.
Let $\Delta_{0,T}$ be the triangle $\{(s,t)\in \RR^2:0< s\le t\le T\}$ and $\bar \Delta_{0,T}$ its closure in $\RR^2$.
Denote by
$C_{\beta+\beta^\prime}(\bar \Delta_{0,T};V\otimes V)$ the Banach--space of continuous functions defined on $\bar\Delta_{0,T}$, that are zero for $s=t$ and with norm
\begin{equation*}
   \|v\|_{\beta+\beta^\prime}=\sup_{0 \le s< t\leq  T} \frac{\|v(s,t)\|}{(t-s)^{\beta+\beta^\prime}}<\infty.
\end{equation*}

A modification of this space is given by
$C_{\beta+\beta^\prime,\sim}(\Delta_{0,T};V\otimes V)$,  consisting of continuous
functions $v$ defined on $\Delta_{0,T}$, that are zero for $0<s=t$, such that
\begin{equation*}
   \|v\|_{\beta+\beta^\prime,\sim}=\sup_{0< s< t\leq  T} s^{\beta}\frac{\|v(s,t)\|}{(t-s)^{\beta+\beta^\prime}}<\infty.
   \end{equation*}
These functions may not be defined for $s=0$ and may have a singularity for $(s,t),\,s=0$.
Replacing $V\otimes V$ by $\RR$ an example for an element of this space is $(s,t)\mapsto s^{-\beta}(t-s)^{\beta+\beta^\prime}$.
\begin{lemma}\label{grdsl2}
The spaces $C_{\beta+\beta^\prime}(\bar \Delta_{0,T};V\otimes V)$, $C_{\beta+\beta^\prime,\sim}(\Delta_{0,T};V\otimes V)$ are Banach--spaces.
\end{lemma}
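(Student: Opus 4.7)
The plan is to handle both spaces with the same standard template for completeness of H\"older-type spaces, keeping careful track of the weight $s^\beta$ in the second case since it allows for a singularity at $s=0$. First I would verify that $\|\cdot\|_{\beta+\beta^\prime}$ and $\|\cdot\|_{\beta+\beta^\prime,\sim}$ are genuine norms, not just seminorms: homogeneity and the triangle inequality are immediate from their definition as a supremum. Positive definiteness uses the vanishing condition on the diagonal together with continuity. Indeed, if $\|v\|_{\beta+\beta^\prime}=0$ (resp.\ $\|v\|_{\beta+\beta^\prime,\sim}=0$), then $v(s,t)=0$ for every $(s,t)$ with $s<t$ (resp.\ $0<s<t$); continuity and the condition $v(s,s)=0$ (for $s\ge 0$, resp.\ $s>0$) then force $v\equiv 0$ on the domain of definition.

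For $C_{\beta+\beta^\prime}(\bar\Delta_{0,T};V\otimes V)$ the argument is essentially classical. Given a Cauchy sequence $(v_n)$, the pointwise bound
\begin{equation*}
\|v_n(s,t)-v_m(s,t)\|\le (t-s)^{\beta+\beta^\prime}\,\|v_n-v_m\|_{\beta+\beta^\prime}\le T^{\beta+\beta^\prime}\|v_n-v_m\|_{\beta+\beta^\prime}
\end{equation*}
shows that $(v_n)$ is uniformly Cauchy on $\bar\Delta_{0,T}$, hence converges uniformly to a continuous limit $v$ with $v(s,s)=0$. Passing to the pointwise limit in the defining Cauchy estimate yields $\|v_n-v\|_{\beta+\beta^\prime}\to 0$ and, in particular, $v\in C_{\beta+\beta^\prime}$.

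For $C_{\beta+\beta^\prime,\sim}(\Delta_{0,T};V\otimes V)$ the same strategy works, but the weight $s^\beta$ only gives the weaker pointwise estimate
\begin{equation*}
\|v_n(s,t)-v_m(s,t)\|\le s^{-\beta}(t-s)^{\beta+\beta^\prime}\|v_n-v_m\|_{\beta+\beta^\prime,\sim},
\end{equation*}
which is uniformly Cauchy only on each slab $\{(s,t)\in\Delta_{0,T}:s\ge\varepsilon\}$ with $\varepsilon>0$. I would therefore build the candidate limit $v$ pointwise on $\Delta_{0,T}$ (using completeness of $V\otimes V$), note that the convergence is uniform on every such slab so that $v$ is continuous on $\Delta_{0,T}$, and recover the boundary condition $v(s,s)=0$ for $s>0$ from the weighted estimate, which gives $\|v(s,t)\|\le s^{-\beta}(t-s)^{\beta+\beta^\prime}\sup_n\|v_n\|_{\beta+\beta^\prime,\sim}\to 0$ as $t\to s$. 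The bound $\|v\|_{\beta+\beta^\prime,\sim}<\infty$ follows from taking the pointwise limit in $m$ of $s^\beta\|v_n(s,t)-v_m(s,t)\|/(t-s)^{\beta+\beta^\prime}$ and combining with the uniform bound on $\|v_n\|_{\beta+\beta^\prime,\sim}$; the same limiting procedure shows $\|v_n-v\|_{\beta+\beta^\prime,\sim}\to 0$.

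I expect the one step that requires genuine care, rather than being pure routine, to be showing that the limit function $v$ actually belongs to the space despite the possible singularity at $s=0$. The key observation, which I would emphasize, is that the weighted norm gives no direct control of $v$ near $s=0$ but that this is not needed: the space only requires continuity on the \emph{open} triangle $\Delta_{0,T}$ together with vanishing on the open diagonal $\{0<s=t\}$, and both properties transfer cleanly from $(v_n)$ to $v$ through slab-by-slab uniform convergence.
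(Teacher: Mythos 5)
Your proposal is correct and follows essentially the same route as the paper: the paper also treats only the weighted space in detail, obtains uniform Cauchyness on the compact sets $\bar\Delta_{n^{-1},T}$ (your slabs $\{s\ge\varepsilon\}$), deduces a continuous limit on the open triangle vanishing on the diagonal, and recovers both the finite weighted norm and the norm convergence by passing to the pointwise limit in the uniform bound and in the Cauchy estimate. The only differences are cosmetic: you additionally record the (routine) norm axioms and spell out the unweighted case, which the paper omits.
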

\begin{proof}
We consider only the second space.
Let $(K_n)_{n\in\NN}$, $K_n=\bar\Delta_{n^{-1},T}$ be an increasing sequence of compact subsets of $\Delta_{0,T}$ such that $\bigcup_n K_n=\Delta_{0,T}$ and let $(v_m)_{m\in\NN}$ be a Cauchy--sequence on
$C_{\beta+\beta^\prime,\sim}(\Delta_{0,T};V\otimes V)$. We then have
\begin{equation*}
  \sup_{\footnotesize
  \begin{array}{c}(s,t)\in K_n\\s<t\end{array}}s^{\beta}\frac{\|v_m(s,t)-v_{m^\prime}(s,t)\|}{(t-s)^{\beta+\beta^\prime}}\ge \frac{1}{T^{\beta+\beta^\prime}n^\beta} \sup_{\footnotesize
  \begin{array}{c}(s,t)\in K_n\\s<t\end{array}}\|v_m(s,t)-v_{m^\prime}(s,t)\|.
\end{equation*}
Hence for any $n\in\NN$ we have that $(v_m)_{m\in\NN}$ is a  Cauchy--sequence in  $K_n$ such that there is a uniform limit $v$  which is continuous on any $K_n$ and thus continuous on $\Delta_{0,T}$. In addition,
$v(s,s)=0$ for $0<s\le T$.

Furthermore, there exists a $c>0$ such that for all $m\in\NN$ and $(s,t)\in \Delta_{0,T},\,s<t$,
\begin{equation*}
  s^{\beta}\frac{\|v_m(s,t)\|}{(t-s)^{\beta+\beta^\prime}}\le c
\end{equation*}
and by the convergence of $v_m(s,t)$ to $v(s,t)$ we obtain
\begin{equation*}
  s^{\beta}\frac{\|v(s,t)\|}{(t-s)^{\beta+\beta^\prime}}\le c
\end{equation*}
such that $v\in C_{\beta+\beta^\prime,\sim}(\Delta_{0,T};V\otimes V)$.
Now for any  $\eps>0$ and for $m^\prime>m\ge N(\eps)$ and $(s,t)\in \Delta_{0,T}, s<t$, we get
\begin{equation*}
s^{\beta}\frac{\|v_m(s,t)-v_{m^\prime}(s,t)\|}{(t-s)^{\beta+\beta^\prime}}<\eps
\end{equation*}
and thus for $m^\prime\to\infty$
\begin{equation*}
s^{\beta}\frac{\|v_m(s,t)-v(s,t)\|}{(t-s)^{\beta+\beta^\prime}}\le\eps
\end{equation*}
which shows the convergence of $(v_m)_{m\in\NN}$ to $v$ in $C_{\beta+\beta^\prime,\sim}(\Delta_{0,T};V\otimes V)$.

\end{proof}

\section{Fractional pathwise integrals}\label{s2}
In this section we introduce a $V$-valued integral where the integrator is  not of bounded variation but H{\"o}lder--continuous. This guides us an infinite--dimensional version of the Young--integral. For this purpose we introduce $V$--valued fractional derivatives.

\smallskip

Let $F,\,\xi$ be sufficiently regular functions on $[s,t]$.
For $\alpha\in (0,1)$ we define the right--sided fractional derivative of order $\alpha$ of $F$ and the left--sided fractional derivative of order $1-\alpha$ of $\xi_{t-}(\cdot):=\xi(\cdot)-\xi(t)$, given for $0\le s\le r\le t$ by the expressions
 \begin{align*}
 %\label{fractder}
 \begin{split}
    D_{{s}+}^\alpha F[r]=&\frac{1}{\Gamma(1-\alpha)}\bigg(\frac{F(r)}{(r-s)^\alpha}+\alpha\int_{s}^r\frac{F(r)-F(q)}{(r-q)^{1+\alpha}}dq\bigg),\\
    D_{{t}-}^{1-\alpha} \xi_{{t}-}[r]=&\frac{(-1)^{1-\alpha}}{\Gamma(\alpha)}
    \bigg(\frac{\xi(r)-\xi(t)}{(t-r)^{1-\alpha}}+(1-\alpha)\int_r^{t}\frac{\xi(r)-\xi(q)}{(q-r)^{2-\alpha}}dq\bigg).
\end{split}
\end{align*}
Here $\Gamma(\cdot)$ denotes the Gamma function.

The following theorem states regularity conditions on $F,\,\xi$ such that the Young--integral exists.

\begin{theorem}\label{grds_t1}
Assume $\beta>\alpha$ and $\alpha+\beta^\prime >1$. Let $\tilde V,\,\hat V$ be two separable Hilbert spaces with complete orthonormal bases  $(\tilde e_i)_{i\in\NN}$ and $(\hat e_j)_{j\in\NN}$, respectively. Let
 $F\in C_{\beta,\sim}([0,T]; L_2(\tilde V,\hat V))$, $\xi \in C_{\beta^\prime} ([0,T]; \tilde V)$ and assume
\begin{equation*}
  r\mapsto\|D_{s+}^\alpha F[r]\|_{L_2(\tilde V,\hat V)}|D_{t-}^{1-\alpha}\xi_{t-}[r]|_{\tilde V}
\end{equation*}
is Lebesgue-integrable. Then for $0\le s\le r\le t \le T$ we  define
\begin{equation*}
  \int_s^t F(r)d\xi(r):=(-1)^\alpha\sum_{j}\bigg(\sum_i\int_s^t D_{s+}^\alpha(\hat e_j,F(\cdot)\tilde e_i)_{\hat V}[r]D_{t-}^{1-\alpha}(\tilde e_i,\xi_{t-}(\cdot))_{\tilde V}[r]dr\bigg)\hat e_j.
\end{equation*}
As a consequence,
\begin{equation*}
   \bigg|\int_s^t F(r)d\xi(r)\bigg|_{\hat V}\le\int_s^t \|D_{s+}^\alpha F[r]\|_{L_2(\tilde V,\hat V)}|D_{t-}^{1-\alpha}\xi_{t-}[r]|_{\tilde V}dr<\infty.
\end{equation*}
\end{theorem}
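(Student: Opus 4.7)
The plan is to interpret the right-hand side of the definition of $\int_s^t F(r)\,d\xi(r)$ as a Bochner integral in $\hat V$, and then obtain the announced bound by a pointwise operator inequality together with the triangle inequality for Bochner integrals. The identity to target is
$$\int_s^t F(r)\,d\xi(r)\;=\;(-1)^\alpha\int_s^t D_{s+}^\alpha F[r]\,D_{t-}^{1-\alpha}\xi_{t-}[r]\,dr,$$
in which the right-hand side is the Bochner integral of a $\hat V$-valued function.

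First I would verify that the two fractional derivatives are well defined at each $r\in(s,t)$. For $F\in C_{\beta,\sim}([0,T];L_2(\tilde V,\hat V))$ and $0<s\le q<r\le T$ one has $\|F(r)-F(q)\|_{L_2(\tilde V,\hat V)}\le s^{-\beta}\|F\|_{\beta,\sim}(r-q)^\beta$, and because $\beta>\alpha$ the integrand occurring in $D_{s+}^\alpha F[r]$ is absolutely Bochner-integrable in $L_2(\tilde V,\hat V)$; the analogous computation with $\xi\in C_{\beta^\prime}([0,T];\tilde V)$ and $\beta^\prime>1-\alpha$ yields $D_{t-}^{1-\alpha}\xi_{t-}[r]\in\tilde V$. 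Next, the linearity of the fractional derivative operators in the integrand gives
$$D_{s+}^\alpha(\hat e_j,F(\cdot)\tilde e_i)_{\hat V}[r]=(\hat e_j,D_{s+}^\alpha F[r]\tilde e_i)_{\hat V},\qquad D_{t-}^{1-\alpha}(\tilde e_i,\xi_{t-}(\cdot))_{\tilde V}[r]=(\tilde e_i,D_{t-}^{1-\alpha}\xi_{t-}[r])_{\tilde V},$$
so for fixed $r$ and $j$ the inner sum over $i$ collapses, by Parseval in $\tilde V$ applied to the vector $D_{t-}^{1-\alpha}\xi_{t-}[r]$ and the bounded operator $D_{s+}^\alpha F[r]$, to $(\hat e_j,\,D_{s+}^\alpha F[r]\,D_{t-}^{1-\alpha}\xi_{t-}[r])_{\hat V}$, and the outer sum over $j$ then reconstructs the vector $D_{s+}^\alpha F[r]\,D_{t-}^{1-\alpha}\xi_{t-}[r]\in\hat V$. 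A Fubini/Tonelli argument, whose absolute-convergence assumption is exactly the Lebesgue-integrability hypothesis of the theorem, permits the $r$-integral to be interchanged with the summations in $i$ and $j$; this yields the claimed Bochner-integral representation and, as a by-product, the basis independence of the definition.

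The stated bound is then immediate: for each $r$ the inequality $|D_{s+}^\alpha F[r]\,D_{t-}^{1-\alpha}\xi_{t-}[r]|_{\hat V}\le\|D_{s+}^\alpha F[r]\|_{L_2(\tilde V,\hat V)}|D_{t-}^{1-\alpha}\xi_{t-}[r]|_{\tilde V}$ holds because the Hilbert--Schmidt norm dominates the operator norm, and the triangle inequality for Bochner integrals together with $|(-1)^\alpha|=1$ finishes the estimate. The main technical obstacle is the Fubini interchange, which is where the integrability hypothesis plays the decisive role; without it the object could at best be constructed as a double limit of scalar Z\"ahle integrals rather than packaged as a single Bochner integral in $\hat V$, and linearity properties such as $\int_s^t(F_1+F_2)\,d\xi=\int_s^tF_1\,d\xi+\int_s^tF_2\,d\xi$ used in later sections would become significantly more delicate to verify.
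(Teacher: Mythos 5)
Your argument is correct, but note that the paper itself does not prove Theorem \ref{grds_t1}: it defers entirely to \cite{GaLuSch} and only remarks afterwards that the definition ``reduces the Hilbert--space valued integral to an infinite sum of one--dimensional Young--integrals.'' That phrasing suggests the cited proof works term by term with the scalar Z\"ahle integrals $\int_s^t D_{s+}^\alpha(\hat e_j,F(\cdot)\tilde e_i)_{\hat V}[r]\,D_{t-}^{1-\alpha}(\tilde e_i,\xi_{t-}(\cdot))_{\tilde V}[r]\,dr$ and then sums, whereas you go the other way: you first build the $\hat V$--valued Bochner integral $(-1)^\alpha\int_s^t D_{s+}^\alpha F[r]\,D_{t-}^{1-\alpha}\xi_{t-}[r]\,dr$ and then identify the double series with it via Parseval and Fubini--Tonelli. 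Your route buys a cleaner statement (the integral is a single Bochner integral, the bound is the triangle inequality, and basis independence is automatic), at the cost of having to justify the interchange of $\sum_i$, $\sum_j$ and $\int_s^t$; the term-by-term route avoids vector-valued integration but must separately prove convergence of the double series in $\hat V$ and then the norm estimate. Two details you should spell out to make the interchange airtight: (i) the Tonelli hypothesis for the sum over $i$ follows from Cauchy--Schwarz in $i$, namely $\sum_i|(\hat e_j,D_{s+}^\alpha F[r]\tilde e_i)_{\hat V}|\,|(\tilde e_i,D_{t-}^{1-\alpha}\xi_{t-}[r])_{\tilde V}|\le |(D_{s+}^\alpha F[r])^*\hat e_j|_{\tilde V}\,|D_{t-}^{1-\alpha}\xi_{t-}[r]|_{\tilde V}\le \|D_{s+}^\alpha F[r]\|_{L_2(\tilde V,\hat V)}|D_{t-}^{1-\alpha}\xi_{t-}[r]|_{\tilde V}$, which is exactly the assumed integrable majorant; and (ii) the strong measurability of $r\mapsto D_{s+}^\alpha F[r]\,D_{t-}^{1-\alpha}\xi_{t-}[r]$ (immediate from the continuity of $F$ and $\xi$ and separability), without which the Bochner integral is not defined. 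With those two remarks added, the proof is complete; the square-summability over $j$ is then free, since the coefficients are the Fourier coefficients of the already-constructed vector.
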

For the proof we refer to \cite{GaLuSch}. In particular, we have reduced the definition of this Hilbert--space valued integral to an infinite  sum of one dimensional Young--integrals.
\begin{corollary}\label{grds_c1}
Under the assumptions of Theorem \ref{grds_t1}, for any $T>0$ there exists a $c>0$ such that for any
$F\in C_{\beta,\sim}([0,T]; L_2(\tilde V,\hat V))$, $\xi \in C_{\beta^\prime} ([0,T]; \tilde V)$ and $0\le s<t\le T$
\begin{equation*}
   \bigg|\int_s^t  F(r)\xi(r)dr\bigg|_{\hat V}\le c(t-s)^{\beta^\prime}|||\xi|||_{\beta^\prime}||F||_{\beta,\sim}<\infty.
\end{equation*}
Similarly, if $F\in C_{\beta}([0,T]; L_2(\tilde V,\hat V))$, $\xi \in C_{\beta^\prime} ([0,T]; \tilde V)$, then we get
\begin{equation*}
   \bigg|\int_s^t  F(r)\xi(r)dr\bigg|_{\hat V}\le c(t-s)^{\beta^\prime}|||\xi|||_{\beta^\prime}(\|F\|_C+(t-s)^\beta|||F|||_\beta).
\end{equation*}
\end{corollary}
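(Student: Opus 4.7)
The plan is to apply Theorem \ref{grds_t1} directly and then estimate the two fractional-derivative factors separately, choosing $\alpha \in (1-\beta^\prime, \beta)$, which is nonempty by the assumption $\alpha + \beta^\prime > 1$ and $\beta > \alpha$. By Theorem \ref{grds_t1}, it suffices to bound
\begin{equation*}
   I(s,t):=\int_s^t \|D_{s+}^\alpha F[r]\|_{L_2(\tilde V,\hat V)} \, |D_{t-}^{1-\alpha}\xi_{t-}[r]|_{\tilde V}\, dr.
\end{equation*}

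First I would treat the $\xi$-factor, which depends only on the standard H\"older regularity. Using $|\xi(r)-\xi(q)|\le (q-r)^{\beta^\prime}|||\xi|||_{\beta^\prime}$ in both the boundary and integral terms of the definition of $D_{t-}^{1-\alpha}\xi_{t-}[r]$, and noting that the integral $\int_r^t(q-r)^{\beta^\prime+\alpha-2}dq$ converges precisely because $\alpha+\beta^\prime>1$, one gets
\begin{equation*}
  |D_{t-}^{1-\alpha}\xi_{t-}[r]|_{\tilde V}\le c\, (t-r)^{\beta^\prime+\alpha-1}\,|||\xi|||_{\beta^\prime}.
\end{equation*}

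Next I would estimate $\|D_{s+}^\alpha F[r]\|$ in the two cases. The boundary term $\|F(r)\|/(r-s)^\alpha$ is bounded by $\|F\|_C/(r-s)^\alpha$ in both cases. For the integral term in the $C_{\beta,\sim}$--case, I use $\|F(r)-F(q)\|\le q^{-\beta}(r-q)^\beta\|F\|_{\beta,\sim}$, substitute $q=s+v(r-s)$, and use the elementary bound $(s+v(r-s))^{-\beta}\le(v(r-s))^{-\beta}$ to absorb the unwanted $s$-dependence; this yields
\begin{equation*}
  \int_s^r\frac{\|F(r)-F(q)\|}{(r-q)^{1+\alpha}}dq \le c\,(r-s)^{-\alpha} B(1-\beta,\beta-\alpha)\,\|F\|_{\beta,\sim},
\end{equation*}
where the Beta function converges because $\beta<1$ and $\beta>\alpha$. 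In the $C_\beta$--case the same integral is bounded directly by $c\,(r-s)^{\beta-\alpha}|||F|||_\beta$.

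Finally I would carry out the change of variables $r=s+u(t-s)$ in $I(s,t)$ and collect the powers. In the $C_{\beta,\sim}$--case both contributions combine to
\begin{equation*}
  I(s,t)\le c\, (t-s)^{\beta^\prime}\,|||\xi|||_{\beta^\prime}\,\|F\|_{\beta,\sim} \int_0^1 u^{-\alpha}(1-u)^{\beta^\prime+\alpha-1}du,
\end{equation*}
while in the $C_\beta$--case the boundary term gives $(t-s)^{\beta^\prime}\|F\|_C$ and the integral term gives $(t-s)^{\beta+\beta^\prime}|||F|||_\beta$, each multiplied by a convergent Beta integral, leading to the stated estimate. The only nontrivial step is the removal of the singular $s$-weight in the $C_{\beta,\sim}$--case; the convergence of all Beta-type integrals is automatic from the assumptions $0<\alpha<\beta<1/2$ and $\alpha+\beta^\prime>1$.
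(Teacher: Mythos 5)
Your proposal is correct and follows essentially the same route as the paper: the paper's (sketched) proof rests on exactly the two pointwise bounds $\|D_{s+}^\alpha F[r]\|\le c\|F\|_{\beta,\sim}(r-s)^{-\alpha}$ and $|D_{t-}^{1-\alpha}\xi_{t-}[r]|\le c|||\xi|||_{\beta^\prime}(t-r)^{\beta^\prime+\alpha-1}$, followed by the Beta--function integration you carry out. Your treatment of the singular weight via $q^{-\beta}\le (q-s)^{-\beta}$ is the right way to fill in the detail the paper leaves implicit, and the second case is handled analogously, as the paper indicates.
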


Note that the first estimate is an immediate consequence of the fact that if $F\in C_{\beta,\sim}([0,T];L_2(\tilde V,\hat V))$ with $\alpha+\beta>1$, and $\xi \in C_{\beta^\prime} ([0,T]; \tilde V)$ then
\begin{align*}
  \|D_{s+}^\alpha F[r]\|_{L_2(\tilde V,\hat V)}& \le c\frac{||F||_{\beta,\sim}}{(r-s)^\alpha},\\|D_{t-}^{1-\alpha} \xi_{t-}[r]|_{\tilde V}& \leq c |||\xi|||_{\beta^\prime} (t-r)^{\beta^\prime+\alpha-1}.
\end{align*}
The second inequality can be deduced in a similar way.\\

If now we suppose that $\beta \leq \alpha$ then the assumptions of Theorem \ref{grds_t1} does not make sense. To overcome this problem
we have to consider a modification of the fractional derivatives introduced above. More precisely, we introduce a new fractional derivative for a special function $F$ given by the integrand of our original problem \eqref{grds_eq011}, so that we set $F(t)=G(u(t))$
where operator $G$ satisfies the assumptions described in Lemma \ref{locall2}. In addition we set $\xi=\omega \in C_{\beta^\prime}([0,T];V)$ that will also satisfy the hypothesis {\bf (H3)} of Section \ref{slocal}.

Consider the compensated fractional derivative defined as
\begin{align*}
  \hat D_{s+}^\alpha G(u(\cdot))[r]=&\frac{1}{\Gamma(1-\alpha)}\bigg(\frac{G(u(r))}{(r-s)^\alpha}\\&+\alpha\int_s^r\frac{G(u(r))-G(u(q))-DG(u(q))(u(r)-u(q))}{(r-q)^{1+\alpha}}dq\bigg).
\end{align*}
For tensor valued elements $v:\Delta_{0,T}\to V\otimes V$  we define
\begin{align*}
\mathcal{D} _{{t}-}^{1-\alpha}v[r] =&\frac{(-1)^{1-\alpha}}{%
\Gamma(\alpha)} \bigg( \frac{v(r,t)}{(t-r)^{1-\alpha}}%
+(1-\alpha) \int_r^{t} \frac{v(r,q)}{(q-r)^{2-\alpha}}dq %
\bigg).
\end{align*}
Below we clarify under which conditions these fractional derivatives are well defined. With these fractional derivatives in mind, if the pair $(u,v)$ satisfies the so-called Chen--equality, that is, for $0<s\le r\le t$
\begin{equation}\label{chen}
   v(s,r)+v(r,t)+(u(r)-u(s))\otimes_V(\omega(t)-\omega(r))=
   v(s,t),
\end{equation}
then the definition of the (pathwise stochastic) integral for the current work is the following:
\begin{align}\label{localeq6}
\begin{split}
\int_s^tG(u)d\omega:
  =&(-1)^\alpha\int_s^t\hat D_{s+}^\alpha G(u(\cdot))[r]D_{t-}^{1-\alpha}\omega_{t-}[r]dr\\&-(-1)^{2\alpha-1}\int_s^tD_{s+}^{2\alpha-1} DG(u(\cdot))[r]
  D_{t-}^{1-\alpha}\dD_{t-}^{1-\alpha}v(\cdot,t)[r]dr.
\end{split}
\end{align}
Note that this definition is an infinite dimensional generalization of the concept of integral given in \cite{HuNu09}.

According to the previous definition, for the integrand of the integral on the left hand side we should rather write the term $G_v(u)$ instead of $G(u)$, but for the sake of simplification we keep $G(u)$. Note that, however, for sufficiently regular $\omega$
this integral can be rewritten in the sense of Theorem \ref{grds_t1} with $F(t)=G(u(t))$ if we replace $v(s,t)$ by
\begin{equation}\label{grds_eq3b}
(u\otimes \omega)(s,t)=\int_s^t(u(r)-u(s))\otimes_Vd\omega(r),
\end{equation}
see also the beginning of the next section.\\

For the existence of \eqref{localeq6} we have the following result.

\begin{theorem}\label{locall8}
Consider $1/3<\beta<\beta^\prime<1/2$, choose $\beta<\alpha<2\beta$, $\alpha+\beta^\prime>1,\, \beta+1>2\alpha$ and assume that $G$ satisfies the assumptions of Lemma \ref{locall2}. Then for $T>0$ there exists a $c>0$ such that for $u\in C_{\beta,\sim}([0,T];V)$, $v \in C_{\beta+\beta^\prime,\sim}(\Delta_{0,T};V\otimes V)$,
 and $\omega\in C_{\beta^\prime}([0,T];V)$ such that (\ref{chen}) is fulfilled, for $0\le s\le t\le T$
\begin{equation*}
%\label{localeq_1}
  \bigg|\int_s^t G(u)d\omega\bigg|_{\hat V}\le ct^{\beta^\prime-\beta}((1+\|u\|_{\beta,\sim}^2)|||\omega|||_{\beta^\prime}+(1+\|u\|_{\beta,\sim})\|v\|_{\beta+\beta^\prime,\sim})(t-s)^{\beta}.
\end{equation*}
Similarly, with the same choice of the parameters, if $\omega\in C_{\beta^\prime}([0,T];V)$, $u\in C_{\beta}([0,T];V)$ and $v \in C_{\beta+\beta^\prime}(\Delta_{0,T};V\otimes V)$ are coupled by (\ref{chen}), then
\begin{equation*}
\label{localeq_1a}
  \bigg|\int_s^t G(u)d\omega\bigg|_{\hat V}\le ct^{\beta^\prime-\beta}((1+t^{2\beta}|||u|||_{\beta}^2)|||\omega|||_{\beta^\prime}+(1+ t^{2\beta}|||u|||_{\beta}\|v\|_{\beta+\beta^\prime}+t^{2\beta}\|v\|_{\beta+\beta^\prime}^2))(t-s)^{\beta}.
\end{equation*}
\end{theorem}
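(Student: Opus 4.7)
I would split the integral defined in (\ref{localeq6}) into
\begin{align*}
I_1 &= (-1)^\alpha\int_s^t \hat D_{s+}^\alpha G(u(\cdot))[r]\,D_{t-}^{1-\alpha}\omega_{t-}[r]\,dr,\\
I_2 &= -(-1)^{2\alpha-1}\int_s^t D_{s+}^{2\alpha-1}DG(u(\cdot))[r]\,D_{t-}^{1-\alpha}\dD_{t-}^{1-\alpha}v(\cdot,t)[r]\,dr,
\end{align*}
bound each integrand pointwise by combining the analytic estimates of Lemma \ref{locall2} with the H\"older regularities of $u,v,\omega$, and then reduce the $r$-integration to a Beta-function computation.

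For $I_1$ the numerator inside the compensated derivative is controlled by $c_{D^2G}|u(r)-u(q)|^2$ via Lemma \ref{locall2}. Using $|u(r)-u(q)|\le \|u\|_{\beta,\sim}q^{-\beta}(r-q)^\beta$, the change of variable $q=s+\xi(r-s)$ together with the pointwise inequality $s+\xi(r-s)\ge\xi(r-s)$ gives
\begin{equation*}
\int_s^r q^{-2\beta}(r-q)^{2\beta-1-\alpha}\,dq\le (r-s)^{-\alpha}\int_0^1\xi^{-2\beta}(1-\xi)^{2\beta-1-\alpha}\,d\xi,
\end{equation*}
a constant finite thanks to $\beta<1/2$ and $\alpha<2\beta$. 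Hence $\|\hat D_{s+}^\alpha G(u(\cdot))[r]\|_{L_2(V,\hat V)}\le C(1+\|u\|_{\beta,\sim}^2)(r-s)^{-\alpha}$. Combined with the standard bound $|D_{t-}^{1-\alpha}\omega_{t-}[r]|\le C|||\omega|||_{\beta'}(t-r)^{\beta'+\alpha-1}$ and a Beta-function integration in $r$ (valid since $\alpha<1$ and $\alpha+\beta'>1$), I obtain $|I_1|\le C(1+\|u\|_{\beta,\sim}^2)|||\omega|||_{\beta'}(t-s)^{\beta'}$, which, using $t-s\le t$, becomes $|I_1|\le Ct^{\beta'-\beta}(1+\|u\|_{\beta,\sim}^2)|||\omega|||_{\beta'}(t-s)^\beta$.

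For $I_2$ the same strategy yields $\|D_{s+}^{2\alpha-1}DG(u(\cdot))[r]\|\le C(1+\|u\|_{\beta,\sim})(r-s)^{1-2\alpha}$ (using $\|DG(u(r))-DG(u(q))\|\le c_{D^2G}|u(r)-u(q)|$ and the hypothesis $2\alpha<\beta+1$), while the estimate $\|v(r,q)\|\le r^{-\beta}(q-r)^{\beta+\beta'}\|v\|_{\beta+\beta',\sim}$ with $\alpha+\beta'>1$ produces $\|\dD_{t-}^{1-\alpha}v(\cdot,t)[r]\|\le Cr^{-\beta}\|v\|_{\beta+\beta',\sim}(t-r)^{\beta+\beta'+\alpha-1}$. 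The crucial observation is now $r^{-\beta}\le(r-s)^{-\beta}$ (valid because $r\ge r-s$ for $s\ge 0$), which absorbs the $r^{-\beta}$ factor into $(r-s)^{-\beta}$ and turns the remaining $r$-integration into
\begin{equation*}
\int_s^t(r-s)^{1-2\alpha-\beta}(t-r)^{\beta+\beta'+\alpha-1}\,dr = C\,B(2-2\alpha-\beta,\beta+\beta'+\alpha)(t-s)^{1-\alpha+\beta'}.
\end{equation*}
Since $1-\alpha+\beta'-\beta\ge 0$ under the hypotheses, I bound $(t-s)^{1-\alpha+\beta'}\le T^{1-\alpha}t^{\beta'-\beta}(t-s)^\beta$, yielding $|I_2|\le Ct^{\beta'-\beta}(1+\|u\|_{\beta,\sim})\|v\|_{\beta+\beta',\sim}(t-s)^\beta$.

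Summing the two bounds produces the first inequality. The second inequality (with $C_\beta$ and $C_{\beta+\beta'}$ in place of their ``$\sim$''-versions) follows from the identical scheme: the cleaner estimates $|u(r)-u(q)|\le|||u|||_\beta(r-q)^\beta$ and $\|v(r,q)\|\le\|v\|_{\beta+\beta'}(q-r)^{\beta+\beta'}$ carry no $q^{-\beta},r^{-\beta}$ singularities, so the change-of-variables trick is unnecessary, and the extra factors $t^{2\beta}$ in the claim appear naturally from the Beta-function bookkeeping applied to these cleaner estimates. The main obstacle is purely technical bookkeeping: the apparent singularities $q^{-\beta},r^{-\beta}$ created by the ``$\sim$''-norms threaten to contaminate the final bound with an $s^{-\gamma}$ factor, and the two elementary inequalities $s+\xi(r-s)\ge\xi(r-s)$ and $r\ge r-s$ are precisely what prevents this.
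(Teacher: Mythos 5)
Your treatment of $I_1$ is correct and coincides with the paper's: the compensated numerator is bounded by $c_{D^2G}|u(r)-u(q)|^2$, the substitution $q=s+\xi(r-s)$ with $q\ge\xi(r-s)$ tames the $q^{-2\beta}$ singularity, and the Beta--function integration (using $\alpha<2\beta$, $2\beta<1$, $\alpha+\beta'>1$) yields $\|\hat D_{s+}^\alpha G(u(\cdot))[r]\|\le c(1+\|u\|_{\beta,\sim}^2)(r-s)^{-\alpha}$ and then $|I_1|\le c(1+\|u\|_{\beta,\sim}^2)|||\omega|||_{\beta'}(t-s)^{\beta'}$, exactly as in the text.

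The estimate of $I_2$, however, has a genuine gap. The integrand in \eqref{localeq6} is $D_{t-}^{1-\alpha}\dD_{t-}^{1-\alpha}v(\cdot,t)[r]$ --- an \emph{iterated} fractional derivative --- but you only bound the inner object $\dD_{t-}^{1-\alpha}v(\cdot,t)[r]$ and then integrate, silently dropping the outer $D_{t-}^{1-\alpha}$. This shows up in two ways. First, your exponent $(t-r)^{\beta+\beta'+\alpha-1}$ corresponds to one application of a derivative of order $1-\alpha$ to an object of H\"older order $\beta+\beta'$; the correct exponent is $\beta+\beta'+2\alpha-2$, and only with it do the orders balance ($(2\alpha-1)+(1-\alpha)+(1-\alpha)=1$) so that the $r$-integration produces $(t-s)^{\beta'}$ rather than your $(t-s)^{1-\alpha+\beta'}$ --- the discrepancy is exactly the missing $1-\alpha$. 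Second, and more importantly, estimating the outer derivative requires controlling increments $\dD_{t-}^{1-\alpha}v(r,t)-\dD_{t-}^{1-\alpha}v(q,t)$, which involve differences such as $v(r,t)-v(q,t)$; these are \emph{not} controlled by $\|v\|_{\beta+\beta',\sim}$ alone but must be rewritten via the Chen equality \eqref{chen} as $-v(q,r)-(u(r)-u(q))\otimes_V(\omega(t)-\omega(r))$. This is why \eqref{chen} is an explicit hypothesis of the theorem and why the paper's bound
\begin{equation*}
\|D_{t-}^{1-\alpha}\dD_{t-}^{1-\alpha}v[r]\|\le c\bigl(\|v\|_{\beta+\beta',\sim}+\|u\|_{\beta,\sim}\,|||\omega|||_{\beta'}\bigr)(r-s)^{-\beta}(t-r)^{\beta+\beta'+2\alpha-2}
\end{equation*}
contains the cross term $\|u\|_{\beta,\sim}|||\omega|||_{\beta'}$, which multiplied by $(1+\|u\|_{\beta,\sim})$ is absorbed into the $(1+\|u\|_{\beta,\sim}^2)|||\omega|||_{\beta'}$ part of the final estimate. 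Your proof never invokes the Chen equality and your bound for $I_2$ omits this contribution, so the argument as written does not establish the stated inequality; the same objection applies to your sketch of the second (non-$\sim$) case.
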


Note that if $u\in C_{\beta,\sim}([0,T];V)$ then
\begin{equation*}
  \|\hat D_{s+}^\alpha G(u)[r]\|_{L_2(V,\hat V)}\le c\frac{c_G+c_{D^2G}\|u\|_{\beta,\sim}^2}{(r-s)^{\alpha}},
\end{equation*}
and if $u\in C_{\beta}([0,T];V)$ we get
\begin{equation*}
  \|\hat D_{s+}^\alpha G(u)[r]\|_{L_2(V,\hat V)}\le c\frac{c_G+c_{D^2G}(r-s)^{2\beta}\|u\|_{\beta}^2}{(r-s)^{\alpha}},
\end{equation*}
hence the first of the integrals defining (\ref{localeq6}) gives us a similar estimate than in Theorem \ref{grds_t1}. For the second integral we can use Theorem \ref{grds_t1} directly replacing
$\alpha$ by $2\alpha-1$, $G(u(r))$ by $DG(u(r))$ defined on $V\otimes V$ and $\xi(r)$ by $\dD_{t-}^{1-\alpha}v(\cdot,t)[r]$. In particular, taking the Chen--equality and the regularity into account, in the first case we have
\begin{equation*}
   \|D_{t-}^{1-\alpha}\mathcal{D} _{{t}-}^{1-\alpha}v[r]\|\le c(\|v\|_{\beta+\beta^\prime,\sim}+\|u\|_{\beta,\sim}|||\omega|||_{\beta^\prime})(r-s)^{-\beta}(t-r)^{\beta+\beta^\prime+2\alpha-2},
\end{equation*}
and in second case  that
\begin{align*}
    &\| D_{s+}^{2\alpha-1} DG(u)[r]\|_{L_2(V\otimes V,\hat V)}\le c\frac{c_{DG}+c_{D^2G}(r-s)^\beta|||u|||_\beta}{(r-s)^{2\alpha-1}}\\
   &\|D_{t-}^{1-\alpha}\mathcal{D} _{{t}-}^{1-\alpha}v[r]\|\le c(\|v\|_{\beta+\beta^\prime}+|||u|||_\beta|||\omega|||_{\beta^\prime})(t-r)^{\beta+\beta^\prime+2\alpha-2}.
\end{align*}
The details of the proof of this theorem can be found in \cite{GaLuSch}.\\

To finish this section, we want to emphasize that, in contrast to the classical Ito--integral which is defined as a limit in probability of Daboux sums with respect to the increments of a Brownian--motion, with the above definition there are no exceptional sets of probability which in general depend on the integrand. In this sense our integral is pathwise, allowing us to investigate whether the solution to (\ref{grds_eq01}) generates a random dynamical system, see Section \ref{RDSs}.

\section{Pathwise local solutions of SEEs} \label{slocal}
In this section we formulate a result about the local existence of solutions to \eqref{grds_eq01} for a noise-path $\omega$ which is in $C_{\beta^\prime}([0,T];V)$, $\beta^\prime\in (1/3,1/2)$. In particular we formulate conditions such that \eqref{grds_eq011} has a local mild solution when the pathwise (stochastic) integral is defined in the sense of \eqref{localeq6}. For the proofs of the results given in this section we refer to Garrido-Atienza {\it et al.} \cite{GaLuSch}.
These results generalize those of Hu and Nualart \cite{HuNu09} to the case of a state space which is now an infinite dimensional Hilbert--space.

\smallskip

If $\omega$ were (for instance) a trace class Brownian--motion, according to the theory of Da Prato and Zabczyk \cite{DaPrato} the equation (\ref{grds_eq011}) would have a unique mild solution provided that $G$ and $S$ satisfy weaker conditions than the ones formulated in the last section. The corresponding integrals for these equations would be defined in the Ito--sense, and this means that they would not be pathwise, which is an obstacle to generate a random dynamical system. In Maslowski and Nualart \cite{MasNua03} these type of equations are solved by fractional techniques for $\beta^\prime$--H{\"o}lder--continuous integrators with $\beta^\prime>1/2$, where the integrals have been defined similarly to Theorem \ref{grds_t1}, allowing to introduce a random dynamical system, whose existence has been investigated recently by Chen {\it et al.} \cite{ChGGSch12}. However, the integral as introduced in Theorem \ref{grds_t1} is not well defined  when the H{\"o}lder--exponent of the integrator is less than or equal to 1/2, which is the case that we aim to consider in this paper.

\smallskip

We interpret the solution to \eqref{grds_eq011} as the first component of an element $U=(u,v)$ that satisfies of the operation equation
\begin{align*}
%\label{grds_eq2}
\begin{split}
 & u=\tT_1(U),\\
  &\tT_1(U,\omega,u_0)(t)
  =
S(t)u_0+(-1)^\alpha\int_0^t\hat D_{0+}^\alpha (S(t-\cdot)G(u(\cdot)))[r]D_{t-}^{1-\alpha}\omega[r]dr\\
  &-(-1)^{2\alpha-1}
  \int_0^tD_{0+}^{2\alpha-1}(S(t-\cdot)DG(u(\cdot)))[r]D_{t-}^{1-\alpha}\dD_{t-}^{1-\alpha}v[r]dr.
\end{split}
\end{align*}
Note that to define $\tT_1$ we have applied the definition given in \eqref{localeq6}, which makes necessary to set up and solve an equation for the second component $v$ of $U$. In order to do that, we assume for a while that the driving path $\omega$ is smooth enough, giving us the opportunity to know what is the suitable equation to be satisfied for the $v$ component, to later make the adequate assumptions to translate the situation to the case of having a H\"older--continuous driving path $\omega$ with H\"older--exponent $\beta^\prime\in (1/3,1/2)$.  Therefore, suppose that $\omega$ is smooth, so that $v$ is given by (\ref{grds_eq3b}). For such a smooth $\omega$, combining (\ref{grds_eq011}) with (\ref{grds_eq3b}) we have
\begin{align*}
%\label{ns}
\begin{split}
    (u\otimes\omega)(s,t)=&\int_s^t(S(\xi-s)-{\rm id})u(s)\otimes_V\omega^\prime(\xi)d\xi\\
    &+\int_s^t\int_s^\xi S(\xi-r)G(u(r))\omega^\prime(r)dr\otimes_V\omega^\prime(\xi)d\xi.
    \end{split}
\end{align*}
Representing the appearing integrals in fractional sense we obtain
\begin{align}\label{grds_eq4}
\begin{split}
    (u\otimes\omega)&(s,t)
    =(-1)^\alpha\int_s^tD_{s+}^\alpha((S(\cdot-s)-{\rm id})u(s))[\xi]\otimes_VD_{t-}^{1-\alpha}\omega[\xi]d\xi\\
    &-(-1)^\alpha\int_s^t \hat D_{s+}^\alpha
    G(u(\cdot))[r] D_{t-}^{1-\alpha}(\omega\otimes_S\omega)(\cdot,t)_{t-}[r]dr\\
&+(-1)^{2\alpha-1}\int_s^t D_{s+}^{2\alpha-1} DG(u(\cdot))[r] D_{t-}^{1-\alpha}\dD_{t-}^{1-\alpha}(u\otimes(\omega\otimes_S\omega)(t))[r]dr.
\end{split}
\end{align}

For such smooth $\omega$, the operator $(\omega\otimes_S\omega)$ is given by
\begin{align}\begin{split}
  L_2(V,\hat V)\ni E\mapsto E(\omega\otimes_S\omega)(s,t)&=\int_s^t\int_s^\xi
    S(\xi-r)E\omega^\prime(r)dr \otimes_V \omega^\prime(\xi)d\xi\\
   &=\int_s^t\int_r^t
   S(\xi-r)E\omega^\prime(r)d\xi \otimes_V \omega^\prime(\xi)dr.
   \end{split}\label{omegaSS}
\end{align}

In particular, $(\omega\otimes_S\omega)$ satisfies the following Chen--equality
\begin{align*}
&E(\omega\otimes_S \omega)(s,r) +E(\omega\otimes_S
\omega)(r,t) +(-1)^{-\alpha} \omega_S(r,t)S_\omega(s,r)E\\
&\quad =E(\omega\otimes_S \omega)(s,t),
\end{align*}
where
\begin{align}
\label{localeq16}
\begin{split}
&e\in V\mapsto\omega_S(s,t)e=(-1)^{-\alpha}\int_s^tS(\xi-s)e\otimes_V\omega^\prime(\xi)d\xi,\\
&E\in L_2(V,\hat V) \mapsto S_\omega(s,t)E=  \int_s^t S(t-r)E\omega^\prime(r)dr.
\end{split}
\end{align}
The different expressions in (\ref{omegaSS}) and (\ref{localeq16}) are well-defined when $\omega$ is regular, see \cite{GaLuSch} for details.\\

Now let us come back to the original situation in which $\omega \in C_{\beta^\prime}([0,T];V)$ for $\beta^\prime\in (1/3,1/2)$. Denote by $\omega_n$ a piecewise linear (continuous) approximation of $\omega$ with respect to an equidistant partition of length $2^{-n}T$ such that $\omega_n(t)=\omega(t)$ for the partition points $t$. For these $\omega_n$ the operator
$(\omega_n\otimes_S\omega_n)$ can be defined according to (\ref{omegaSS}).\\

We stress that we have to give to $(\omega\otimes_S\omega)$ some meaning when $\omega$ is not smooth. To be precise, this meaning is formulated in the following Hypothesis {\bf (H)}, consisting in the following three assumptions:\\

{\bf (H1)} Let $H\in (1/3,1/2]$ and let  $1/3<\beta< \beta^\prime <H$. Suppose that there is an $\alpha$ such that $1-\beta< \alpha <2 \beta$,
$\alpha< \frac{\beta+1}{2}$.

\smallskip

{\bf (H2)} Let $A$ be the generator of an analytic semigroup $S$ and let $G: V\to L_2(V,\hat V)$
be a non--linear mapping satisfying the assumptions of Lemma \ref{locall2}, with $\hat V\subset V$ such that the embedding operator is Hilbert-Schmidt.

\smallskip

{\bf (H3)}
Let $(\omega_n)_{n\in\mathbb{N}}$ be a sequence  of piecewise linear functions with values in $V$ such that
$((\omega_n\otimes_S\omega_n))_{n\in\mathbb{N}}$ is defined by
\eqref{omegaSS}. Assume then that for any $\beta^\prime<H$
$$\lim_{n\to \infty} (\omega_n,(\omega_n\otimes_S\omega_n))=(\omega,(\omega\otimes_S\omega))\, \text {in }
C_{\beta^\prime}([0,T];V)\times C_{2\beta^\prime} (\bar \Delta_{0,T}; L_2(L_2(V,\hat V),V\otimes V)).$$

The parameter $H$ in the above assumption {\bf(H1)} will represent the Hurst parameter of a fractional Brownian motion in Section \ref{RDSs}.

In the recent paper Garrido-Atienza {\it et al.} \cite{GLS14d} the authors propose two different settings where assumption {\bf (H3)} is satisfied: the first one considers as driving noise a trace--class fractional Brownian--motion $B^H$ with $H\in (1/3,1/2]$ and values in a Hilbert--space, whereas by another less restrictive method, the second one considers an infinite-dimensional trace-class Brownian--motion $B^{1/2}$. \\

Now we should realize that the structure of \eqref{grds_eq4} is quite similar to the  structure of $\tT_1$ when replacing $\omega$ by $(\omega\otimes_S\omega)$ and
$v=(u\otimes\omega)$ by $(u\otimes(\omega\otimes_S\omega))$. Furthermore, if we write $w(t)=(u\otimes(\omega\otimes_S\omega)(t))$, then $w$ can be interpreted by fractional integrals as
\begin{align}\label{wholder}
\begin{split}
\tilde  E &w(t,s,q)=-\int_s^q\hat D_{s+}^{\alpha}\omega_S(\cdot,t)\tilde E(u(\cdot)-u(s),\cdot)[r]  D_{q-}^{1-\alpha}\omega_{q-}[r]dr\\
&+(-1)^{\alpha-1}\int_s^q D_{s+}^{2\alpha-1}\tilde E(u(\cdot)-u(s),\cdot)[r]  D_{q-}^{1-\alpha}\dD_{q-}^{1-\alpha}(\omega_S(t)\otimes\omega)[r]
    dr\\&+(-1)^{\alpha-1}\int_s^q D_{s+}^{2\alpha-1}\omega_S(\cdot,t)[r] \tilde E D_{q-}^{1-\alpha}\dD_{q-}^{1-\alpha}v[r] dr
\end{split}
\end{align}

where $\tilde E\in L_2(V\otimes V, \hat V)$. In addition $w$ satisfies a special form of the Chen--equality, see \cite{GaLuSch}.  The previous considerations make possible to formulate an equation for the second component $v$ of $U$ given by
\begin{equation*}
  v=\tT_2(U)
\end{equation*}
where the operator $\tT_2(U)$ is defined by (\ref{grds_eq4}) substituting $(u\otimes(\omega\otimes_S\omega))$ by $w$, and $w$ fulfills (\ref{wholder}).

In the sequel, we denote $\tT(U)=(\tT_1(U),\tT_2(U))$. Sometimes we will write
$$\tT(U,\omega,(\omega\otimes_S\omega), u_0)=(\tT_1(U,\omega, u_0),\tT_2(U,\omega,(\omega\otimes_S\omega), u_0))$$ to show the different ingredients in the operator. Furthermore, when this operator is defined on $[s,T]$, for $s>0$, instead of on $[0,T]$, later we will denote it by $\tT^s$. However, to simplify a bit the presentation, when the starting point of the interval is zero we drop the super--index off and simply write $\tT$.
\smallskip

We look now for solutions of the operator equation
\begin{equation*}
  U=\tT(U),
\end{equation*}
that will be called {\it mild path--area solutions} to \eqref{grds_eq01}. In order to do that, we begin by introducing the phase space. Take a fixed $\omega\in C_{\beta^\prime}([0,T];V)$ and consider $\gamma$ such that $\alpha<\gamma<1$.
Denote by $(W_{0,T},\|\cdot\|_W)$ the subspace of elements $U=(u,v)$ of the Banach--space $C_{\beta,\sim}([0,T];V)\times C_{\beta+\beta^\prime,\sim}(\ \Delta_{0,T};V\otimes V)$ such that the Chen--equality (\ref{chen}) holds. Let us also consider a subset $\hat W_{0,T}$ of this space given by the limit points in this space of the set
\begin{align}\label{localeq12}
\begin{split}
  \{(u_{n},(u_{n}\otimes \omega_{n}))&:n\in\NN,u_{n}\in  C_{\gamma}([0,T];V), \,u_n(0)\in D((-A)),\\& (\omega_n,(\omega_n\otimes_S\omega_n))\,\text{satisfies }{\bf (H3)}\}.
\end{split}
\end{align}
Note that this set of limit points is a  subspace of
$C_{\beta,\sim}([0,T];V)\times C_{\beta+\beta^\prime,\sim}(\Delta_{0,T};V\otimes V)$ which is closed. Hence $\hat W_{0,T}$ itself is a complete metric space depending on $\omega$ with a metric generated by the norm of $C_{\beta,\sim}([0,T];V)\times C_{\beta+\beta^\prime,\sim}(\Delta_{0,T};V\otimes V)$:
\begin{equation*}
 d_{\hat W}(U,\hat U)  =\|U-\hat U\|_W=\|u-\hat u\|_{\beta,\sim}+\|v-\hat v\|_{\beta+\beta^\prime,\sim},
\end{equation*}
for $U,\,\hat U \in \hat W_{0,T}$. In addition, elements $U\in\hat W_{0,T}$ satisfy the Chen--equality
\eqref{chen} with respect to $\omega$. \\

\begin{remark}\label{nota}
We would like to emphasize that the choice of the space $\hat W_{0,T}$ ensures the following additivity formula:
\begin{equation*}\label{grds_eq5}
  S(t-\tau)\int_0^\tau S(\tau-r)G(u(r))d\omega(r)+\int_\tau^tS(t-r)G(u(r))d\omega(r)=\int_0^t S(t-r)G(u(r))d\omega(r),
\end{equation*}
which is derived as a result of that the approximative integrals related to the space $\hat W_{0,T}$ enjoy such additivity property.
As pointed out in \cite{GaLuSch}, for the original integral it is quite involved to show the additivity, which is necessary to prove the uniqueness of the local solution as well as and the Chen--equality for the solution of \eqref{grds_eq011}.

\end{remark}

Let us mention some properties of $\tT$, see \cite{GaLuSch} for the proof:

\begin{lemma}\label{grds_l2b}
Suppose {\bf (H)} holds.
(i) For two elements $\tilde U=(\tilde u,\tilde v),\,\hat U=(\hat u,\hat v)\in {\hat W_{0,T}}$ such that $\tilde u(0)=\tilde u_0\in V$, $\hat u(0)=\hat u_0\in V$:
    \begin{align*}
      \|\tT &(\tilde U,\omega,(\omega\otimes_S\omega),\tilde u_0)-\tT(\hat U,\omega,(\omega\otimes_S\omega),\hat u_0)\|_{W}\\
      \le & c|\tilde u_0-\hat u_0| +cT^{\beta^\prime}(1+\|\tilde U\|_W^2+\|\hat U\|_W^2)\|\tilde U-\hat U\|_W,
    \end{align*}
    where the constant $c$ depends on $T,\,|||\omega|||_{\beta^{\prime\prime}}$, $|||\omega|||_{\beta^{\prime}}$ and $\|(\omega\otimes_S\omega)\|_{2\beta^\prime}$
    for $\beta^\prime<\beta^{\prime\prime}<H$.

\smallskip

(ii) Let $U\in {\hat W_{0,T}}$ and assume that $(\omega_n,(\omega_n\otimes_S\omega_n))$ satisfies {\bf(H3)}. Then
\begin{equation*}
  \lim_{n\to\infty}\|\tT(U,\omega_n,(\omega_n\otimes_S\omega_n),u_0)-\tT(U,\omega,(\omega\otimes_S\omega),u_0)\|_W=0.
\end{equation*}
This convergence is uniform for $U\in {\hat W_{0,T}}$  contained in a bounded set.
In addition, $\tT(U,\omega,(\omega\otimes_S\omega),u_0)\in \hat W_{0,T}$.

\end{lemma}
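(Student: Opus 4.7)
The plan is to treat (i) and (ii) separately, expanding $\tT_1$ via (\ref{localeq6}) and $\tT_2$ via (\ref{grds_eq4})-(\ref{wholder}) and then bounding each fractional integral term by term with Lemma \ref{grds_l1}, Lemma \ref{locall2} and the integral estimates behind Theorem \ref{locall8}. For part (i), write
\begin{equation*}
\tT_1(\tilde U,\omega,(\omega\otimes_S\omega),\tilde u_0)-\tT_1(\hat U,\omega,(\omega\otimes_S\omega),\hat u_0)=S(\cdot)(\tilde u_0-\hat u_0)+J_1+J_2,
\end{equation*}
where $J_1,\,J_2$ are the two fractional-derivative integrals of (\ref{localeq6}) with $G,\,DG$ replaced by the differences $G(\tilde u)-G(\hat u)$, $DG(\tilde u)-DG(\hat u)$. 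The semigroup term contributes at most $c|\tilde u_0-\hat u_0|$ to $\|\cdot\|_{\beta,\sim}$ by (\ref{eq1})-(\ref{eq2}). For $J_1$, control the compensated derivative $\hat D_{0+}^\alpha(S(t-\cdot)(G(\tilde u)-G(\hat u)))[r]$ using the last inequality of Lemma \ref{locall2} applied to the four-point $G$-increment together with the semigroup bounds of Lemma \ref{grds_l1}, then pair with $|D_{t-}^{1-\alpha}\omega_{t-}[r]|\le c|||\omega|||_{\beta^\prime}(t-r)^{\beta^\prime+\alpha-1}$ and integrate in $r$. This yields a term linear in $\|\tilde U-\hat U\|_W$ with a size prefactor $(1+\|\tilde U\|_W+\|\hat U\|_W)$ and a time factor proportional to $T^{\beta^\prime}$. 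The analogous treatment of $J_2$, based on the second-to-last $DG$-inequality of Lemma \ref{locall2} and on the tensor-valued bound for $D_{t-}^{1-\alpha}\dD_{t-}^{1-\alpha}\tilde v[r]$ recorded after Theorem \ref{locall8}, is where the quadratic factor $\|\tilde U\|_W^2+\|\hat U\|_W^2$ enters; the Chen-decomposition of $\tilde v-\hat v$ is handled the same way. For $\tT_2$ argue term by term in (\ref{grds_eq4}); since the auxiliary object $w$ in (\ref{wholder}) is bilinear in $u$ and $(\omega,(\omega\otimes_S\omega))$, the difference $\tilde w-\hat w$ automatically carries one $\|\tilde U-\hat U\|_W$-factor and one size factor, and no initial-datum contribution appears because (\ref{grds_eq4}) depends on $u$ only through increments $u(\cdot)-u(s)$.

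For part (ii), freeze $U=(u,v)\in\hat W_{0,T}$. The estimates used in (i), now applied to the differences $\omega_n-\omega$ and $(\omega_n\otimes_S\omega_n)-(\omega\otimes_S\omega)$ in place of $\tilde U-\hat U$, yield a bound
\begin{equation*}
\|\tT(U,\omega_n,(\omega_n\otimes_S\omega_n),u_0)-\tT(U,\omega,(\omega\otimes_S\omega),u_0)\|_W\le c(1+\|U\|_W+\|U\|_W^2)\rho_n,
\end{equation*}
where $\rho_n=|||\omega_n-\omega|||_{\beta^\prime}+\|(\omega_n\otimes_S\omega_n)-(\omega\otimes_S\omega)\|_{2\beta^\prime}\to 0$ by \textbf{(H3)}; because the size prefactor is polynomial in $\|U\|_W$, this convergence is uniform on bounded subsets of $\hat W_{0,T}$. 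For the final membership claim, each $\tT(U,\omega_n,(\omega_n\otimes_S\omega_n),u_0)$ is built from the smooth approximant $\omega_n$ and so belongs to (or is a limit of elements of) the generating family (\ref{localeq12}); by closedness of $\hat W_{0,T}$ inside $C_{\beta,\sim}([0,T];V)\times C_{\beta+\beta^\prime,\sim}(\Delta_{0,T};V\otimes V)$, the convergence just proved places $\tT(U,\omega,(\omega\otimes_S\omega),u_0)$ back in $\hat W_{0,T}$.

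The main obstacle will be the bookkeeping in $J_2$ and in the $DG$-term of $\tT_2$: the compensated fractional derivative must simultaneously absorb first-order Taylor corrections, be composed with $S(t-\cdot)$, and be paired against a tensor-valued object whose Chen decomposition (\ref{chen}) forces the appearance of an $\|u\|_{\beta,\sim}|||\omega|||_{\beta^\prime}$ summand. Arranging the exponents $\alpha,\,\beta,\,\beta^\prime$ of \textbf{(H1)} so that every $dr$-singularity is integrable, while cleanly isolating the $T^{\beta^\prime}$ factor and separating the linear distance factor from the quadratic size factor, is where the real work lies.
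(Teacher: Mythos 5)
The paper itself does not prove this lemma: it defers to \cite{GaLuSch}, and the closest in-house analogues are the sketched proof of Lemma \ref{ex3} and the membership argument inside the proof of Theorem \ref{tex2}. Measured against those, your strategy for part (i) and for the convergence claim in part (ii) is the intended one: decompose $\tT_1$ into the semigroup term plus the two fractional integrals, use the four-point estimates of Lemma \ref{locall2} together with the semigroup bounds of Lemma \ref{grds_l1} and the tensor-valued bounds recorded after Theorem \ref{locall8}, and obtain the quadratic size factor from the $DG$-term and the Chen decomposition of $\tilde v-\hat v$; for (ii), telescope in $(\omega,(\omega\otimes_S\omega))$ and use {\bf (H3)}. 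Two points deserve attention, though. First, your argument for the final membership claim is too quick: for a general $U\in\hat W_{0,T}$ the element $\tT(U,\omega_n,(\omega_n\otimes_S\omega_n),u_0)$ is \emph{not} in the generating family (\ref{localeq12}), because $U$ itself need not be of the form $(u_n,(u_n\otimes\omega_n))$. One must first approximate $U$ by elements $(u_n,(u_n\otimes\omega_n))$ of (\ref{localeq12}) with $u_n(0)\in D(-A)$, observe that $\tT_1$ applied to such an element (with driving path $\omega_n$) is again a $C_\gamma$ classical object with initial value in $D(-A)$ and that $\tT_2$ of it is the corresponding area $(\tT_1(\cdot)\otimes\omega_n)$ given by (\ref{grds_eq3b}), so that $\tT$ maps the generating set into itself, and only then invoke the continuity statements (i) and (ii) together with closedness of $\hat W_{0,T}$. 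This is exactly the double-approximation step carried out in the proof of Theorem \ref{tex2} for $\hat X_{0,T}$, and it is the real content of the membership claim.

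Second, the statement records that the constant depends on $|||\omega|||_{\beta^{\prime\prime}}$ for some $\beta^{\prime\prime}$ with $\beta^\prime<\beta^{\prime\prime}<H$, not only on $|||\omega|||_{\beta^\prime}$. Your term-by-term plan never explains where this extra regularity is consumed; it enters in the estimates for the area component $\tT_2$ (through $\omega_S$ and $w$ in (\ref{wholder})), where a small loss of H\"older exponent must be absorbed to keep the $dr$-singularities integrable. Since you flag the exponent bookkeeping as the main obstacle, this is precisely the place where the bookkeeping is not merely tedious but requires the strictly larger exponent $\beta^{\prime\prime}$; as written, your proposal would try to close the estimates with $|||\omega|||_{\beta^\prime}$ alone and would not reproduce the stated dependence.
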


Next we establish the existence and uniqueness of a local mild solution as well as its regularity:

\begin{theorem}\label{grds_t2}
Assume the Hypothesis {\bf (H)}. Then for any $u_0\in V$ and $\omega\in C_{\beta^\prime}([0,T];V)$ there exists a time $T_0>0$ such that
\begin{equation*}
  U=\tT(U)
\end{equation*}
has a unique mild path--area  solution $U^0=(u^0,v^0)\in \hat W_{0,T_0}$.

\smallskip

Let $\hat V=V_\kappa$, then for $0<t \le T_0$ we have that $u^0(t)\in V_\kappa$. In particular,
\begin{align}\label{greg}
\begin{split}
|u^0(T_0)|_{V_\kappa}\le c\frac{|u_0|}{T_0^\kappa}+c(1+|||\omega|||_{\beta^\prime})T_0^{\beta^\prime}(1+||U^0||_{W}^2).
\end{split}
\end{align}
In addition, we obtain a sequence $(u_{n}^0)_{n\in\NN}$ with $ \lim_{n\to\infty} |u_n^0-u_0|=0$ such that
\begin{equation*}
  \lim_{n\to\infty}|u^0(T_0)-u^0_n(T_0)|_{V_\kappa}=0.
\end{equation*}
\end{theorem}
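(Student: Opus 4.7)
The plan is to apply the Banach fixed point theorem to $\tT$ on a closed ball of $\hat W_{0,T_0}$ for $T_0$ sufficiently small. Lemma \ref{grds_l2b}(ii) ensures that $\tT$ stabilizes $\hat W_{0,T_0}$, while part (i), applied once with $\hat U$ taken to be the trivial element corresponding to zero initial data and vanishing noise, yields the self-map bound $\|\tT(U)\|_W \le c|u_0| + cT_0^{\beta^\prime}(1+\|U\|_W^2)$, and applied to two arbitrary elements gives the Lipschitz-type estimate $\|\tT(\tilde U) - \tT(\hat U)\|_W \le cT_0^{\beta^\prime}(1+\|\tilde U\|_W^2+\|\hat U\|_W^2)\|\tilde U - \hat U\|_W$. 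Setting $R:=2c|u_0|+1$ and choosing $T_0$ so small that $cT_0^{\beta^\prime}(1+2R^2)\le 1/2$ makes $\tT$ a contractive self-map on the closed ball $B_R\subset \hat W_{0,T_0}$, producing a unique fixed point $U^0=(u^0,v^0)\in B_R$. Uniqueness in the whole space $\hat W_{0,T_0}$ follows by a standard a priori argument: any solution satisfies $\|U\|_W \le c|u_0| + cT_0^{\beta^\prime}(1+\|U\|_W^2)$, which after further shrinking of $T_0$ forces $\|U\|_W\le R$.

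For the $V_\kappa$-regularity statement, evaluate the fixed-point identity at $t=T_0$ to write
\begin{equation*}
u^0(T_0) = S(T_0)u_0 + \int_0^{T_0} S(T_0-r)G(u^0(r))\,d\omega(r),
\end{equation*}
where the integral is interpreted in the sense of \eqref{localeq6}. Under hypothesis \textbf{(H2)}, $G$ takes values in $L_2(V,V_\kappa)$, so Theorem \ref{locall8} applied with the target Hilbert space $\hat V = V_\kappa$ bounds the integral in $V_\kappa$ by $cT_0^{\beta^\prime}\bigl((1+\|u^0\|_{\beta,\sim}^2)|||\omega|||_{\beta^\prime}+(1+\|u^0\|_{\beta,\sim})\|v^0\|_{\beta+\beta^\prime,\sim}\bigr)$, and Lemma \ref{grds_l1} with $\delta=0$, $\gamma=\kappa$ gives $|S(T_0)u_0|_{V_\kappa}\le cT_0^{-\kappa}|u_0|$. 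Adding the two estimates and absorbing $\|u^0\|_{\beta,\sim}$ and $\|v^0\|_{\beta+\beta^\prime,\sim}$ into $\|U^0\|_W$ produces \eqref{greg}.

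For the approximation, pick $u_n^0\in D((-A))$ with $u_n^0\to u_0$ in $V$, and let $\omega_n$ be the piecewise linear approximations from \textbf{(H3)}. Since the radius in the contraction argument depends continuously on $|u_n^0|$ and $(\omega_n,(\omega_n\otimes_S\omega_n))$ converge in the spaces appearing in \textbf{(H3)}, the same $T_0$ works uniformly for all large $n$ and yields fixed points $U_n^0$ of $\tT(\cdot,\omega_n,(\omega_n\otimes_S\omega_n),u_n^0)$; by the generating construction \eqref{localeq12} these sit in $\hat W_{0,T_0}$. Combining Lemma \ref{grds_l2b}(i) (controlling the initial-data perturbation) with Lemma \ref{grds_l2b}(ii) (controlling the driving-path perturbation) in a telescope estimate, then absorbing the quadratic term into the contraction constant, yields $\|U_n^0-U^0\|_W\to 0$. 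The main obstacle, and final step, is upgrading this $V$-valued convergence to convergence in the stronger $V_\kappa$-norm at the single point $T_0$: the semigroup smoothing gives $|S(T_0)(u_n^0-u_0)|_{V_\kappa}\le cT_0^{-\kappa}|u_n^0-u_0|$, and the Lipschitz form of the $V_\kappa$-estimate from Theorem \ref{locall8} applied to the difference of the two integrals is controlled by $c\|U_n^0-U^0\|_W$, whence $|u_n^0(T_0)-u^0(T_0)|_{V_\kappa}\to 0$.
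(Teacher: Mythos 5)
Your overall architecture (fixed point for existence, a $V_\kappa$-estimate of the mild formula for \eqref{greg}, a perturbation/telescope argument for the approximation) matches the paper's, and the first and third parts are essentially what the paper defers to \cite{GaLuSch} and to ``the techniques of Lemma \ref{ex3}''. The problem is in the second part, which is the only part the paper actually proves here, and it is exactly the step you pass over. You bound $\int_0^{T_0}S(T_0-r)G(u^0(r))\,d\omega(r)$ in $V_\kappa$ by ``Theorem \ref{locall8} applied with target space $\hat V=V_\kappa$'', but Theorem \ref{locall8} concerns $\int_s^t G(u)\,d\omega$: its integrand is $G(u(\cdot))$ alone. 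In the operator $\tT_1$ the fractional derivatives act on the composite $r\mapsto S(t-r)G(u^0(r))$, and the factor $S(t-r)$ cannot be pulled outside the integral since it depends on the integration variable. Consequently $\hat D_{0+}^\alpha\bigl(S(t-\cdot)G(u^0(\cdot))\bigr)[r]$ contains the additional term
\begin{equation*}
\int_0^r\frac{\bigl\|(S(t-r)-S(t-q))G(u^0(r))\bigr\|_{L_2(V,V_\kappa)}}{(r-q)^{\alpha+1}}\,dq ,
\end{equation*}
which Theorem \ref{locall8} does not address at all. The paper's proof of \eqref{greg} consists precisely of controlling this term: one picks an auxiliary exponent $\alpha^\prime$ with $\alpha<\alpha^\prime<\beta^\prime+\alpha$, uses Lemma \ref{grds_l1} in the form $\|S(t-r)-S(t-q)\|\le c(r-q)^{\alpha^\prime}(t-r)^{-\alpha^\prime}$, and then checks that the resulting singularity $(t-r)^{-\alpha^\prime}$ is still integrable against $|D_{t-}^{1-\alpha}\omega_{t-}[r]|\lesssim(t-r)^{\beta^\prime+\alpha-1}$ because $\beta^\prime+\alpha-\alpha^\prime>0$; an analogous extra term appears in $D_{0+}^{2\alpha-1}(S(t-\cdot)DG(u^0(\cdot)))$. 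Without this the $V_\kappa$-bound is asserted, not proved. The same gap propagates into your final step, where you again invoke ``the Lipschitz form of the $V_\kappa$-estimate from Theorem \ref{locall8}'' for the difference of two such integrals.

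A secondary, smaller point: your uniqueness argument in all of $\hat W_{0,T_0}$ — ``any solution satisfies $\|U\|_W\le c|u_0|+cT_0^{\beta^\prime}(1+\|U\|_W^2)$, which forces $\|U\|_W\le R$'' — does not work as stated, since the inequality $x\le a+bx^2$ is also satisfied by all $x$ beyond the larger root; one needs a continuation or interval-subdivision argument (or continuity of $T\mapsto\|U\|_{W_{0,T}}$ near $T=0$) to exclude the large branch. The paper sidesteps this by citing \cite{GaLuSch}, so it is less central, but as written the step is not valid.
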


\begin{proof}
The first and third part of the statement have been proven in \cite{GaLuSch} and hence here we only prove the second one.
%%%%%%%%%%%%%%%%%%%%%%%%%%%%%%%%%%%%%%%
Notice that for $\kappa \ge 0$ and any $t>0$ it holds
$$|S(t) u_0|_{V_\kappa}\leq c\frac{|u_0|}{t^\kappa}.$$
Now, we estimate the $V_\kappa$-norm of the integral term:
\begin{align*}
\int_0^tS(t-r)G(u^0(r))d\omega&=(-1)^\alpha\int_0^t \hat D_{0+}^\alpha (
S(t-\cdot)G(u^0(\cdot)))[r]D_{t-}^{1-\alpha}\omega_{t-}[r]dr \\
&-(-1)^{2\alpha-1}\int_0^t D_{0+}^{2\alpha-1}(
S(t-\cdot)DG(u^0(\cdot)))[r] D_{t-}^{1-\alpha}\mathcal{D}
_{t-}^{1-\alpha}v^0[r]dr\\
&=:I_{1}(t)+I_{2}(t).
\end{align*}

Consider $\alpha^\prime>0$ such that $\beta^\prime+\alpha>\alpha^\prime>\alpha$, then thanks to Lemma \ref{locall2}
\begin{align*}
\|\hat D_{0+}^\alpha&(S(t-\cdot)G(u^0(\cdot)))[r]\|_{L_2(V,V_\kappa)}\\
&\le c\bigg(%
\frac{\|S(t-r)G(u^0(r))\|_{L_2(V,V_\kappa)}}{r^\alpha} + \int_0^r \frac{\|(S(t-r)-S(t-q))G(u^0(r))\|_{L_2(V,V_\kappa)}}{(r-q)^{\alpha+1}}dq\\
&\qquad + \int_0^r \frac{ \|S(t-q)(G(u^0(r))-G(u^0(q))-DG(u^0(q))(u^0(r)-u^0(q)))\|_{L_2(V,V_\kappa)}}{(r-q)^{\alpha+1}}dq\bigg)\\
&\le c\bigg(%
\frac{\|G(u^0(r))\|_{L_2(V,V_\kappa)}}{r^\alpha} + \int_0^r \frac{(r-q)^{\alpha^\prime}\|G(u^0(r))\|_{L_2(V,V_\kappa)}}{(t-r)^{\alpha^\prime}(r-q)^{\alpha+1}}dq\\
&\qquad + \int_0^r \frac{ \|(G(u^0(r))-G(u^0(q))-DG(u^0(q))(u^0(r)-u^0(q)))\|_{L_2(V,V_\kappa)}}{(r-q)^{\alpha+1}}dq\bigg)\\
&\leq c\bigg(\frac{c_G}{r^\alpha}+\frac{c_G}{ (t-r)^{\alpha^\prime} }  \int_0^r \frac{ (r-q)^{\alpha^\prime}}{(r-q)^{\alpha+1}} dq +c_{D^2G}\|u^0\|_{\beta,\sim}^2  \int_0^r \frac{ (r-q)^{2\beta} q ^{-2\beta} }{(r-q)^{\alpha+1}} dq\bigg)\\
&\le c ( 1+\|u^0\|^2_{\beta,\sim} ) \bigg(\frac{1}{r^\alpha}+ \frac{1}{(t-r)^{\alpha^\prime}} r^{\alpha^\prime-\alpha}\bigg).
\end{align*}
Since trivially $|D_{t-}^{1-\alpha} \omega_{t-} [r]|\leq c(t-r)^{\beta^\prime+\alpha-1}|||\omega|||_{\beta^\prime}$, for $t>0$ we have
$$|I_{1}(t)|_{V_\kappa} \leq c |||\omega|||_{\beta^\prime} t^{\beta^\prime} (1+\|u^0\|^2_{\beta,\sim} )\leq c |||\omega|||_{\beta^\prime} t^{\beta^\prime} (1+\|U^0\|^2_W ).$$

Similarly, we obtain
\begin{align*}
&\|D_{0+}^{2\alpha-1}(S(t-\cdot)DG(u^0(\cdot)))[r]\|_{L_2(V,V_\kappa)}
 \le c\bigg(\frac{
\|S(t-r)DG(u^0(r))\|_{L_2(V,V_\kappa)}}{r^{2\alpha-1}}\\
&\quad+\int_0^r \frac{\|(S(t-r)-S(t-q))DG(u^0(r))\|_{L_2(V,V_\kappa)}}{(r-q)^{2\alpha}}dq\\
&\quad+\int_0^r \frac{\|S(t-q)(DG(u^0(r))-DG(u^0(q)))\|_{L_2(V,V_\kappa)}}{(r-q)^{2\alpha}}dq\bigg)\\
&\leq c\bigg(\frac{
c_{DG}}{r^{2\alpha-1}}+c_{DG}\int_0^r (t-r)^{-\beta}(r-q)^{\beta-2\alpha}dq+c_{D^2G}\|u^0\|_{\beta,\sim}\int_0^r \frac{(r-q)^{\beta}q^{-\beta}}{(r-q)^{2\alpha}}dq\bigg)\\
&\le c(r^{1-2\alpha}+(t-r)^{-\beta} r^{\beta+1-2\alpha}+\|u^0\|_{\beta,\sim}r^{1-2\alpha}).
\end{align*}

Since $\|D^{1-\alpha}_{t-}\dD^{1-\alpha}_{t-} v^0[r]\|\leq c (\|v^0\|_{\beta+\beta^\prime, \sim} +\|u^0\|_{\beta,\sim}|||\omega|||_{\beta^\prime}) r^{-\beta} (t-r)^{\beta+\beta^\prime+2\alpha -2}$, for $t>0$  we get
\begin{align*}
    |I_{2}(t)|_{V_\kappa}\le&c t^{\beta^\prime}(1+|||\omega|||_{\beta^\prime}) (1+\|U^0\|_W^2),
\end{align*}
and this concludes the proof. The second part of the proof follows similar by the same techniques proving Lemma \ref{ex3}.
\end{proof}

Hence $U^0=(u^0,v^0) \in \hat W_{0,T_0}$ is the (local) mild solution to $U=\tT(U)$ obtained in Theorem \ref{grds_t2}. We denote $I_0=[0,T_0]$.

%%%%%%%%%%%%%%%%%%%%%%%%%%%%%%%%%%%%%%%

\section{Pathwise global solutions to SEEs driven by a H\"older path}

In this section we want to go one step further and present the existence of a {\it global} solution to (\ref{grds_eq011}). The key fact of this result will be that we can build a global solution provided that the initial condition is regular enough. Starting with the initial condition $u_0\in V$, what we know according to Theorem \ref{grds_t2}, and in particular to (\ref{greg}), is that the corresponding local solution to (\ref{grds_eq011}) satisfies $u^0(T_0)\in V_\kappa$. Therefore, in a second step we can pick precisely as initial condition $u^0(T_0)$ and construct the corresponding mild solution, which will turn out to be global thanks to a {\it concatenation} procedure which will be described below.

Throughout all this section, we take $\hat V$ to be the space $V_\kappa$, where the condition on $\kappa$ will be detailed below in the next results.

To start with, we need to consider a new phase space: for $\omega \in C_{\beta^\prime}([0,T];V)$, let $(X_{0,T},\|\cdot\|_X)$ be the subspace of elements $U=(u,v)$ of the Banach--space $C_{\beta}([0,T];V)\times C_{\beta+\beta^\prime}(\ \Delta_{0,T};V\otimes V)$ such that the Chen--equality (\ref{chen}) holds. Straightforwardly $X_{0,T}$ has the norm
\begin{equation*}
  \|U\|_X=\|u\|_{\beta}+\|v\|_{\beta+\beta^\prime}=\|u\|_C+|||U|||_X,
\end{equation*}
where
\begin{equation*}
  |||U|||_X=|||u|||_{\beta}+\|v\|_{\beta+\beta^\prime}.
\end{equation*}
Let $\hat X_{0,T}$ be the (closed) subspace of $X_{0,T}$ with elements $U=(u,v)$ given by the limit points in $X_{0,T}$ of the set
\begin{align}\label{localeq12a}
\begin{split}
  \{(u_{n},(u_{n}\otimes \omega_{n}))&:n\in\NN,u_{n}\in  C_{\gamma}([0,T];V), \,u_n(0)\in V_{\kappa},\\& (\omega_n,(\omega_n\otimes_S\omega_n))\,\text{satisfies }{\bf (H3)}\}
\end{split}
\end{align}
where $\alpha<\gamma<1$. $\hat X_{0,T}$ is a complete metric space with the metric
\begin{equation*}
 d_{\hat X}(U,\hat U) =\|U-\hat U\|_X=||u-\hat u||_{\beta}+\|v-\hat v\|_{\beta+\beta^\prime}
\end{equation*}
for $U,\,\hat U \in \hat X_{0,T}$. Moreover, elements of this space satisfy the Chen-equality \eqref{chen} with respect to $\omega$.
Note that in the space $\hat X_{0,T}$ the additivity of the pathwise integral holds in the same sense as in Remark \ref{nota}.

Consider now only elements of this space with fixed initial value $u(0)=u_0\in V_\kappa$, then we obtain a complete metric space $\hat X_{0,T,u_0}$ with metric
\begin{equation}\label{newnorm}
 d_{\hat X_{0,T},u_0}(U,\hat U) =|||U-\hat U|||_X=|||u-\hat u|||_{\beta}+\|v-\hat v\|_{\beta+\beta^\prime}.
\end{equation}
We also use the notation $u_0$ to describe the constant function on $[0,T]$. Then $U_0:=(u_0,0)\in \hat X_{0,T,u_0}$ and
\begin{equation*}
  d_{\hat X_{0,T},u_0}(U,U_0) =|||U-U_0|||_X=|||u|||_{\beta}+\|v\|_{\beta+\beta^\prime}=|||U|||_X.
\end{equation*}

In particular, the closed ball in $\hat X_{0,T,u_0}$ with center $U_0$ and radius $R$ is given by

\begin{equation*}
B_{\hat X_{0,T},u_0}(R)=  \{U=(u,v)\in \hat X_{0,T,u_0}:|||u|||_{\beta}+\|v\|_{\beta+\beta^\prime}=|||U|||_X\le R\}.
\end{equation*}
In a similar manner we can define qualitatively the spaces $\hat X_{\check T,\hat T,\check u}$ over intervals $[\check T,\hat T]$ and with respect to elements $\check U=(\check u,0)$, with $\check u\in V_\kappa$.\\

The following two lemmas will play an important role to prove the existence of a global solution to (\ref{grds_eq011}) provided that the initial condition belongs to an adequate space $V_\kappa$. Their proofs are omitted since they are quite similar to the corresponding results when having an initial condition in $V$ and $U\in \hat W_{0,T}$, see \cite{GaLuSch}.

\begin{lemma}\label{ex1old} Suppose that Hypothesis ${\bf (H)}$ holds. Then for any $1\geq \kappa\,\geq \beta$ and any $T>0$ there exists $c>0$ such that for $u_0\in V_\kappa$ and $U\in \hat X_{0,T,u_0}$ we have
\begin{align*}
|||{\mathcal T}(U)|||_{X} &\le c(T^{\beta^\prime-\beta}(1+T^{2\beta}|||U|||_X^2)+T^{\kappa-\beta}|u_0|_{V_\kappa}),\\
|{\mathcal T}_1(U)(T)|_{V_\kappa} &\le
cT^{\beta^\prime}(1+T^{2\beta}|||U|||_X^2)+|u_0|_{V_\kappa}.
\end{align*}
The constant $c$ depends on $T$, $|||\omega|||_{\beta^{\prime}}$ and $\|(\omega\otimes_S\omega)\|_{2\beta^\prime}$.
\end{lemma}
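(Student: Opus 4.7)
The plan is to follow the blueprint of the analogous estimates for $u_0\in V$ and $U\in \hat W_{0,T}$ proved in \cite{GaLuSch}, adapting them to the present setting where $u_0\in V_\kappa$ with $\kappa\ge\beta$ and we work in the standard (non-$\sim$) Hölder spaces. Decompose
$$\mathcal{T}_1(U)(t)=S(t)u_0+I_1(t)+I_2(t),$$
where $I_1,I_2$ are the two fractional integrals defining $\mathcal{T}_1$. The decisive gain from assuming $u_0\in V_\kappa$ is that by \eqref{eq2} of Lemma~\ref{grds_l1},
$$|S(t)u_0-S(s)u_0|=|(S(t-s)-\mathrm{id})S(s)u_0|\le c(t-s)^\kappa|u_0|_{V_\kappa},$$
which gives the clean Hölder bound $|||S(\cdot)u_0|||_\beta\le cT^{\kappa-\beta}|u_0|_{V_\kappa}$, with no singular weight at $0$. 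This is what generates the $T^{\kappa-\beta}|u_0|_{V_\kappa}$ term in the first estimate.

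For $|I_i(t)-I_i(s)|$, I would split $\int_0^t=\int_0^s+\int_s^t$ and write $S(t-r)=S(s-r)+(S(t-r)-S(s-r))$ on the $[0,s]$ piece. Each of the resulting sub-integrals is then bounded by the fractional-derivative inequalities that follow Theorem~\ref{locall8} in the $u\in C_\beta$ version, namely
$$\|\hat D_{s+}^\alpha G(u)[r]\|_{L_2(V,\hat V)}\le c\,\frac{c_G+c_{D^2G}(r-s)^{2\beta}|||u|||_\beta^2}{(r-s)^\alpha},\qquad |D_{t-}^{1-\alpha}\omega_{t-}[r]|\le c|||\omega|||_{\beta'}(t-r)^{\beta'+\alpha-1},$$
and the analogous bounds for the $D_{s+}^{2\alpha-1}DG(u)[r]$ and $D_{t-}^{1-\alpha}\mathcal{D}_{t-}^{1-\alpha}v[r]$ factors. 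The semigroup differences are absorbed using the two auxiliary inequalities in Lemma~\ref{grds_l1}. Collecting terms, the powers of $T$ line up as $T^{\beta'-\beta}(t-s)^\beta$ with a quadratic prefactor $1+T^{2\beta}|||U|||_X^2$ coming from the $(r-s)^{2\beta}|||u|||_\beta^2$ contributions and the Chen coupling $\|v\|_{\beta+\beta'}+|||u|||_\beta|||\omega|||_{\beta'}$.

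The bound on $\|\mathcal{T}_2(U)\|_{\beta+\beta'}$ uses the three-term representation \eqref{grds_eq4}. The first term is linear in $u(s)$; decomposing $u(s)=u_0+(u(s)-u_0)$ and using $|u(s)-u_0|\le s^\beta|||u|||_\beta$ and $|u_0|\le c|u_0|_{V_\kappa}$ places its contribution inside the quoted RHS (the $|u_0|_{V_\kappa}$ piece being dominated by $T^{\kappa-\beta}|u_0|_{V_\kappa}$). The remaining two terms in \eqref{grds_eq4} are treated exactly as $I_1,I_2$ above, with $(\omega\otimes_S\omega)$ and $w$ playing the role of $\omega$ and $v$ and using Hypothesis \textbf{(H3)} for the required regularity of $(\omega\otimes_S\omega)$. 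Finally, for $|\mathcal{T}_1(U)(T)|_{V_\kappa}$ I would use that $|S(T)u_0|_{V_\kappa}\le|u_0|_{V_\kappa}$ (because $(-A)^\kappa$ commutes with $S$ and $S$ is contractive) and then repeat the $V_\kappa$-valued bounds from the proof of Theorem~\ref{grds_t2} for $I_1(T)$ and $I_2(T)$; the only change is that the $cT^{-\kappa}|u_0|$ term present there is replaced by $|u_0|_{V_\kappa}$, yielding the second estimate with $cT^{\beta'}(1+T^{2\beta}|||U|||_X^2)+|u_0|_{V_\kappa}$.

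The main obstacle is the careful bookkeeping in the Hölder differences $|I_i(t)-I_i(s)|$: one must track the semigroup smoothing powers $(t-r)^{\delta-\gamma}$ from Lemma~\ref{grds_l1} against the fractional-derivative singularities $(r-s)^{-\alpha}$ and $(t-r)^{\beta'+\alpha-1}$, then verify that the remaining $r$- and $\xi$-integrals converge with precisely the scaling $T^{\beta'-\beta}(t-s)^\beta$. Integrability here is ensured exactly by the parameter constraints $1-\beta<\alpha<2\beta$ and $\alpha<(\beta+1)/2$ imposed in Hypothesis \textbf{(H1)}, and the quadratic dependence on $|||U|||_X$ is unavoidable because $G$ has two effectively nonlinear slots (through $G(u)$ and through the Chen coupling in $v$).
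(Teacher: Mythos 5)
Your proposal matches the paper's approach: the paper itself omits the detailed proof, referring to the analogous estimates in \cite{GaLuSch} for $u_0\in V$, $U\in\hat W_{0,T}$, and the one point it does single out --- that $|||S(\cdot)u_0|||_{\beta}\le T^{\kappa-\beta}|u_0|_{V_\kappa}$ via Lemma~\ref{grds_l1}, which is what produces the $T^{\kappa-\beta}|u_0|_{V_\kappa}$ term --- is exactly your central observation, and your treatment of the integral terms and of $|\tT_1(U)(T)|_{V_\kappa}$ (replacing the $cT^{-\kappa}|u_0|$ of \eqref{greg} by $|u_0|_{V_\kappa}$) is the intended adaptation. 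No discrepancy to report.
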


Let us only point out that, by Lemma \ref{grds_l1}, the map $t\mapsto S(t)u_0$ is $\beta$--H{\"o}lder--continuous, which is necessary to estimate the $X$-norm of $\tT$. More precisely, we have
\begin{equation*}
|||S(\cdot)u_0|||_{\beta}\le T^{\kappa-\beta}|u_0|_{V_\kappa}.
\end{equation*}

\begin{lemma}\label{ex3}
Suppose that Hypothesis ${\bf (H)}$ holds.

(i) For any $1\geq \kappa\,\geq \beta$ and any $T>0$ there exists $c>0$ such that for $\tilde U=(\tilde u,\tilde v),\,\hat U=(\hat u,\hat v)\in \hat X_{0,T}$ with $\tilde u(0)=\tilde u_0\in V_\kappa,\,\hat u(0)=\hat u_0\in V_\kappa$,
we have
 \begin{align*}
|||{\mathcal T}(\tilde U)-{\mathcal T}(\hat U)|||_X& \le cT^{\beta^\prime-\beta}(1+T^{2\beta}(|||\tilde U|||_X^2+|||\hat U|||_X^2))(|\tilde u_0-\hat u_0|_{V_\kappa}\\
&+|||\tilde U-\hat U|||_X)+T^{\kappa-\beta}|\tilde u_0-\hat u_0|_{V_\kappa}
\end{align*}
and
\begin{align*}
|{\mathcal T}_1(\tilde U)(T)-{\mathcal T}_1(\hat U)(T)|_{V_\kappa} &\le cT^{\beta^\prime}(1+T^{2\beta}(|||\tilde U|||_X^2+|||\hat U|||_X^2))(|\tilde u_0-\hat u_0|_{V_\kappa}\\
&\quad +|||\tilde U-\hat U|||_X)+c|\tilde u_0-\hat u_0|_{V_\kappa}.
\end{align*}
\end{lemma}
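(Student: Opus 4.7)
The plan is to mirror the contraction estimate used to prove Lemma \ref{ex1old} in \cite{GaLuSch}, applied to the difference $\mathcal{T}(\tilde U)-\mathcal{T}(\hat U)$. I decompose
$$
\mathcal{T}_1(\tilde U)(t)-\mathcal{T}_1(\hat U)(t)=S(t)(\tilde u_0-\hat u_0)+\int_0^t S(t-r)\bigl(G(\tilde u(r))-G(\hat u(r))\bigr)\,d\omega,
$$
and analogously for $\mathcal{T}_2$ via the area representation. The purely linear piece is controlled by Lemma \ref{grds_l1}: the Hölder bound $|||S(\cdot)(\tilde u_0-\hat u_0)|||_\beta\le cT^{\kappa-\beta}|\tilde u_0-\hat u_0|_{V_\kappa}$ supplies the $T^{\kappa-\beta}|\tilde u_0-\hat u_0|_{V_\kappa}$ tail in the first inequality, whereas $|S(T)(\tilde u_0-\hat u_0)|_{V_\kappa}\le c|\tilde u_0-\hat u_0|_{V_\kappa}$ supplies the $c|\tilde u_0-\hat u_0|_{V_\kappa}$ tail in the second.

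For the integral difference I invoke the defining identity \eqref{localeq6}. Writing $\Delta G(r):=G(\tilde u(r))-G(\hat u(r))$ and $\Delta v:=\tilde v-\hat v$, the difference is a sum of fractional integrals whose integrands feature $\hat D_{0+}^\alpha(S(t-\cdot)\Delta G)[r]$, $D_{0+}^{2\alpha-1}(S(t-\cdot)(DG(\tilde u)-DG(\hat u)))[r]$, and $D_{t-}^{1-\alpha}\mathcal{D}_{t-}^{1-\alpha}\Delta v[r]$. The compensated derivative $\hat D_{0+}^\alpha(S(t-\cdot)\Delta G)[r]$ splits, by definition, into the boundary piece $S(t-r)\Delta G(r)/r^\alpha$ plus a singular integral whose integrand is precisely the quantity controlled by the last bullet of Lemma \ref{locall2}, applied with $u_1=\tilde u(r),\,u_2=\tilde u(q),\,v_1=\hat u(r),\,v_2=\hat u(q)$. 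The corresponding splitting of $D_{0+}^{2\alpha-1}(S(t-\cdot)(DG(\tilde u)-DG(\hat u)))[r]$ is controlled by the sixth bullet of the same lemma. In both cases every $u$-dependent factor is either a pointwise difference $|(\tilde u-\hat u)(r)|$ or an increment dominated by $(r-q)^\beta$ multiplied by $|||\tilde u-\hat u|||_\beta$, possibly with an additional increment from $\tilde u$ or $\hat u$ supplying the quadratic prefactor in the cubic cross terms.

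The key bookkeeping step is the splitting
$$
|(\tilde u-\hat u)(r)|\le|\tilde u_0-\hat u_0|_{V_\kappa}+r^\beta|||\tilde u-\hat u|||_\beta,
$$
which is responsible for the mixed factor $|\tilde u_0-\hat u_0|_{V_\kappa}+|||\tilde U-\hat U|||_X$ in the target estimate. Combining this with the standard bounds $|D_{t-}^{1-\alpha}\omega_{t-}[r]|\le c|||\omega|||_{\beta'}(t-r)^{\beta'+\alpha-1}$ and $\|D_{t-}^{1-\alpha}\mathcal{D}_{t-}^{1-\alpha}\Delta v[r]\|\le c(\|\Delta v\|_{\beta+\beta'}+|||\tilde u-\hat u|||_\beta|||\omega|||_{\beta'})(t-r)^{\beta+\beta'+2\alpha-2}$ (both being the difference analogues of the bounds quoted after Theorem \ref{locall8}) and integrating produces the required prefactor $cT^{\beta'-\beta}(1+T^{2\beta}(|||\tilde U|||_X^2+|||\hat U|||_X^2))$ in the first inequality. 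The second inequality is obtained by repeating the argument used in the proof of Theorem \ref{grds_t2}: one inserts the additional factor $\|S(T-r)\|_{L(V,V_\kappa)}\le c(T-r)^{-\kappa}$ and absorbs the endpoint singularity at $r=T$ by the $\alpha'$-trick (choosing $\alpha<\alpha'<\alpha+\beta'$), which upgrades the exponent $T^{\beta'-\beta}$ to $T^{\beta'}$ and converts the $T^{\kappa-\beta}$-tail into a plain $c$-tail.

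The main obstacle is purely combinatorial: every cross-term arising from the $D^2G$ and $D^3G$ ingredients of the last two bullets of Lemma \ref{locall2} must be distributed between a seminorm factor $|||\tilde U-\hat U|||_X$ and an initial-condition factor $|\tilde u_0-\hat u_0|_{V_\kappa}$, and kept quadratic (not cubic) in $|||\tilde U|||_X,|||\hat U|||_X$ so as to match the target bound. The estimate on $\mathcal{T}_2$ is handled identically once the integrand $w$ in \eqref{wholder} is linearized about $\tilde U$ and $\hat U$; apart from length this is a mechanical adaptation of the contraction argument of Lemma \ref{grds_l2b}.
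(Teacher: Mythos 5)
Your treatment of the first inequality is essentially the paper's: the same decomposition into $S(t)(\tilde u_0-\hat u_0)$ plus the integral of the difference, the same bullets of Lemma \ref{locall2} applied to the compensated increments (with the same assignment $u_1=\tilde u(r)$, $u_2=\tilde u(q)$, $v_1=\hat u(r)$, $v_2=\hat u(q)$), and the same bookkeeping step $|(\tilde u-\hat u)(r)|\le c\,|\tilde u_0-\hat u_0|_{V_\kappa}+r^\beta|||\tilde u-\hat u|||_\beta$ (the paper inserts the constant of the embedding $V_\kappa\subset V$ here), which is exactly how the mixed factor $|\tilde u_0-\hat u_0|_{V_\kappa}+|||\tilde U-\hat U|||_X$ arises. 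Up to that point there is nothing to object to.

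The gap is in your mechanism for the second ($V_\kappa$--norm) inequality. You propose to insert $\|S(T-r)\|_{L(V,V_\kappa)}\le c(T-r)^{-\kappa}$ and to absorb the resulting endpoint singularity by the $\alpha^\prime$--trick. That is not what the proof of Theorem \ref{grds_t2} does, and it cannot produce the stated power of $T$: integrating $(T-r)^{-\kappa}$ against $r^{-\alpha}(T-r)^{\beta^\prime+\alpha-1}$ yields $T^{\beta^\prime-\kappa}$ rather than $T^{\beta^\prime}$, and since the standing hypothesis $\kappa+\beta^\prime>1$ forces $\kappa>\beta^\prime$, the factor $\Delta T_i^{\beta^\prime-\kappa}$ blows up on the short intervals $[T_{i-1},T_i]$ used in Theorems \ref{tex2} and \ref{tglobal}; the concatenation argument that this lemma feeds would then collapse. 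The correct mechanism --- and the reason Section 5 fixes $\hat V=V_\kappa$ --- is that $G$, $DG$ and all the difference expressions of Lemma \ref{locall2} already take values in $L_2(V,V_\kappa)$, so $S(t-r)$ and $S(t-q)$ act as uniformly bounded operators on $V_\kappa$ and contribute no negative power of $(t-r)$ at all; the $\alpha^\prime$--trick (with $\alpha<\alpha^\prime<\alpha+\beta^\prime$) is needed only for the increment $S(t-r)-S(t-q)$, which produces $(r-q)^{\alpha^\prime}(t-r)^{-\alpha^\prime}$ with an integrable singularity. With this correction the remainder of your outline goes through.
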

(ii) Let $U\in {\hat X_{0,T}}$ and assume that $(\omega_n,(\omega_n\otimes_S\omega_n))$ satisfies {\bf(H3)}. Then
\begin{equation*}
  \lim_{n\to\infty}|||\tT(U,\omega_n,(\omega_n\otimes_S\omega_n),u_0)-\tT(U,\omega,(\omega\otimes_S\omega),u_0)|||_X=0.
\end{equation*}
This convergence is uniform for $U\in {\hat X_{0,T}}$, $u_0\in V_\kappa$ contained in a bounded set.

\begin{proof}

We only give an idea of the proof of the first inequality of (i), which is based of the different estimates for $G$ coming from Lemma \ref{locall2}.

Trivially, $|||S(\cdot) \tilde u_0-S(\cdot)\hat u_0|||_{\beta}\leq T^{\kappa-\beta}|\tilde u_0-\hat u_0|_{V_\kappa}.$ Denoting $\Delta u=\tilde u-\hat u$,
\begin{align*}
    \|G(\tilde u(r)) &-G(\hat u(r))\|_{L_2(V,V_\kappa)}\le c_{DG}(|\tilde u(r)-\tilde u_0-\hat u(r)+\hat u_0|+|\tilde u_0-\hat u_0|)\\
    &\le c_{DG}\bigg(\sup_{0\le q<r\le T}\frac{|\tilde u(r)-\hat u(r)-(\tilde u(q)-\hat u(q))|}{(r-q)^{\beta}}T^\beta+|\tilde u_0-\hat u_0|\bigg)\\&=c_{DG}(|||\Delta u|||_\beta T^\beta+|\tilde u_0-\hat u_0|)\le c c_{DG}(|||\Delta u|||_\beta T^\beta+|\tilde u_0-\hat u_0|_{V_\kappa})
\end{align*}
and similar for $DG(u(r))$. Note that the last inequality above is a consequence of the continuous embedding $V_\kappa \subset V$. Moreover, thanks to Lemma \ref{locall2} we get
\begin{align*}
    &\|DG(\tilde u(r))-DG(\hat u(r))-(DG(\tilde u(q))-DG(\hat u(q)))\|_{L_2(V,V_\kappa)}\\
   \le& c_{D^2G}|||\Delta u|||_{\beta}(r-q)^\beta+ c c_{D^3G}(|||\Delta u|||_{\beta}
   T^\beta+|u_0^1-u_0^2|_{V_\kappa}) (|||\tilde u|||_\beta+|||\hat u|||_{\beta}) (r-q)^\beta
    \end{align*}
and
\begin{align*}
    \|G(\tilde u(r))&-G(\tilde u(q))-DG(\tilde u(q))(\tilde u(r)-\tilde u(q))
    \\&-(G(\hat u(r))-G(\hat u(q))-DG(\hat u(q))(\hat u(r)-\hat u(q)))\|_{L_2(V,V_\kappa)}\\
    &\le c_{D^2G}
    (|||\tilde u|||_\beta+|||\hat u|||_\beta)|||\Delta u|||_\beta(r-q)^{2\beta}\\
    &+c c_{D^3G}|||\hat u|||_\beta(|||\Delta u|||_\beta T^\beta+|\tilde u_0-\hat u_0|_{V_\kappa})(2|||\tilde u|||_\beta+|||\hat u |||_\beta)(r-q)^{2\beta}.
\end{align*}

\end{proof}

In particular, when $\tilde U,\,\hat U\in \hat X_{0,T,u_0}$, the first inequality of Lemma \ref{ex3} (i) becomes
\begin{align}\label{sic}
|||{\mathcal T}(\tilde U)-{\mathcal T}(\hat U)|||_X& \le cT^{\beta^\prime-\beta}(1+T^{2\beta}(|||\tilde U|||_X^2+|||\hat U|||_X^2))|||\tilde U-\hat U|||_X.
\end{align}

We can also consider $\tT$ on other time intervals, that is, defined on intervals $[s,T]$ where $s>0$. Indeed, replacing $\hat X_{0,T}$ by $\hat X_{s,T}$ these operators satisfy the same estimates as $\tT$ stated in Lemma \ref{ex1old} and Lemma \ref{ex3}. As we already said in Section 3, we denote these operators by $\tT^s$.

In what follows we describe how to construct a global solution to our equation on $[0,T]$.

In a first step, we shall focus on proving the existence of a solution belonging to $\hat X_{T_0,T,u_0^1}$, where $T_0$ was defined in Theorem \ref{grds_t2}. We would like to mention that even though the above estimates of $\tT$ contain squared expressions, it is possible to construct solutions defined in adequate small time intervals $[T_{i-1},T_i]$, namely, solutions belonging to $\hat X_{T_{i-1},T_i,u^{i-1}(T_{i-1})}$ for suitable initial conditions, to further these solutions be joined together to form a whole solution in $\hat X_{T_0,T,u_0^1}$. The choice of the starting point in each step will be clear in the construction of the solution. To prove the existence of these solutions we apply Lemma \ref{ex1old} and Lemma \ref{ex3} on any of the intervals  $[T_{i-1},T_i]$. Note that the appearing constant
$c$ in these lemmas depends on  those intervals, on the ${\beta^\prime}$--norm of $\omega$ in $[T_{i-1},T_i]$ and on the ${2\beta^\prime}$--norm of $(\omega\otimes_S\omega)$ with respect to $\bar\Delta_{T_{i-1},T_i}$. However the following results show that we can choose a larger constant independent of those intervals, and only depending on the given number $T$ and the norms of $\omega$, $(\omega\otimes_S\omega)$ with respect to $[0,T]$, $\bar\Delta_{0,T}$, respectively.  In addition, $c$ will be chosen large enough so that we also take the construction of the local solution given in Theorem \ref{grds_t2} into account.
\smallskip

The way in which we join together these pieces of solutions is described in general as follows:

\begin{definition}\label{concatenation}
For $0<a<b$ we consider the {\em concatenation} $U=(u,v)$ of elements $U^1=(u^1,v^1)\in \hat X_{a,b}$ with $u^1(a)\in V_\kappa$ (or $U^1=(u^1,v^1)\in \hat W_{a,b}$ with $u^1(a)\in V$) and $U^2=(u^2,v^2)\in \hat X_{b,c}$, with $u^2(b)=u^1(b)$, defined by:
\begin{align*}
    &u(t)=\left\{\begin{array}{lcl}
    u^1(t)&:& a\le t \le b\\
    u^2(t)&:& b\le t\le c
    \end{array}\right.\\
    &v(s,t)=\left\{\begin{array}{lcr}
    v^1(s,t)&:& a\le s\le t \le b\\
    v^2(s,t)&:& b\le s\le t\le c\\
    (u^1(b)-u^1(s))\otimes_V(\omega(t)-\omega(b))+v^1(s,b)+v^2(b,t)&:& s\le b<t.
    \end{array}\right.
\end{align*}
\end{definition}
Note that in the previous definition $v$ has been defined according to the Chen--equality.  It is straightforward to check that $U$ satisfies the Chen--equality and $U\in \hat X_{a,c}$ (or  $U\in \hat W_{a,c}$ when $U^1\in \hat W_{a,b}$ with $u^1(a)\in V$). For a proof of a similar statement see Lemma \ref{BW} below.\\

In the next result we present the existence and uniqueness of a first piece of solution $U^1=(u^1,v^1)\in \hat X_{T_0,T_1,u_0^1}$, with initial condition $u_0^1 \in V_\kappa$.

Later on, we will take $u_0^1=u^0(T_0)$ where we remind that $U^0=(u^0,v^0)$ denotes the solution to (\ref{grds_eq011}) in $\hat W_{0,T_0}$ with initial condition $u_0\in V$, see Theorem \ref{grds_t2}.

\begin{theorem}\label{tex2}
Assume Hypothesis ${\bf (H)}$ holds, and that $\kappa+\beta^\prime>1$, $\kappa\leq 1$. Then for any $u_0^1\in V_\kappa$ there exists a small enough $T_1>T_0$ depending on the data such that $U=\tT^{T_0}(U)$ has a unique mild  solution $U^1=(u^1,v^1)\in \hat X_{T_0,T_1,u_0^1}$.

\end{theorem}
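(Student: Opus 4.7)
The plan is to apply the Banach fixed point theorem to $\tT^{T_0}$ on a closed ball in the complete metric space $\hat X_{T_0, T_1, u_0^1}$, choosing $T_1 > T_0$ sufficiently close to $T_0$ depending on $|u_0^1|_{V_\kappa}$ and on $|||\omega|||_{\beta'}$, $\|(\omega \otimes_S \omega)\|_{2\beta'}$ on $[0,T]$. Setting $\delta := T_1 - T_0$, Lemma \ref{ex1old} applied on $[T_0,T_1]$, with the uniform constant $c$ described in the remark preceding Definition \ref{concatenation}, yields for any $U \in \hat X_{T_0, T_1, u_0^1}$ the bound
$$|||\tT^{T_0}(U)|||_X \;\le\; c\,\delta^{\beta'-\beta}\bigl(1+\delta^{2\beta}|||U|||_X^2\bigr) + c\,\delta^{\kappa-\beta}\,|u_0^1|_{V_\kappa}.$$
The hypothesis $\kappa+\beta'>1$, combined with $\beta<\beta'<1/2$ from Hypothesis~\textbf{(H)}, yields $\kappa > 1-\beta' > \beta$, so each of the exponents $\beta'-\beta$, $\beta'+\beta$, $\kappa-\beta$ is strictly positive, and the right-hand side tends to zero as $\delta\to 0^+$ provided $|||U|||_X$ is bounded.

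I would then fix the radius $R := 1 + 2c\,|u_0^1|_{V_\kappa}$ and take $\delta$ so small that
$$c\,\delta^{\beta'-\beta}(1+\delta^{2\beta}R^2) + c\,\delta^{\kappa-\beta}\,|u_0^1|_{V_\kappa} \;\le\; R,$$
so that $\tT^{T_0}$ maps the closed ball $B_{\hat X_{T_0,T_1},u_0^1}(R)$ into itself. For $\tilde U, \hat U$ in this ball, the fixed-initial-data estimate \eqref{sic} adapted to $[T_0,T_1]$ gives
$$|||\tT^{T_0}(\tilde U) - \tT^{T_0}(\hat U)|||_X \;\le\; c\,\delta^{\beta'-\beta}\bigl(1+2\delta^{2\beta}R^2\bigr)\,|||\tilde U - \hat U|||_X,$$
and a further shrinkage of $\delta$ forces the prefactor below $1/2$. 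The Banach fixed point theorem then produces a unique $U^1 = (u^1, v^1) \in B_{\hat X_{T_0,T_1},u_0^1}(R)$ satisfying $U^1 = \tT^{T_0}(U^1)$.

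The main obstacle is not the contraction argument itself, but to verify that the fixed point lives in $\hat X_{T_0,T_1,u_0^1}$ rather than merely in the larger space $X_{T_0,T_1,u_0^1}$, and that it is unique globally in $\hat X_{T_0,T_1,u_0^1}$, not only inside the chosen ball. For the former, I would show that $\tT^{T_0}$ preserves the approximation-closure characterization analogous to \eqref{localeq12a}: starting from $U_0 := (u_0^1, 0) \in \hat X_{T_0,T_1,u_0^1}$ (a trivial limit of smooth approximations), one regularizes $\omega$ and $u_0^1$ by the smooth data from \eqref{localeq12a}, applies $\tT^{T_0}$ there (producing objects in the defining class), and passes to the limit in the $|||\cdot|||_X$-norm by Lemma \ref{ex3}(ii); since $\hat X_{T_0,T_1,u_0^1}$ is closed in $X_{T_0,T_1,u_0^1}$, every Picard iterate, and hence the fixed point $U^1$, lies in $\hat X_{T_0,T_1,u_0^1}$. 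For global uniqueness, given another solution $\bar U \in \hat X_{T_0,T_1,u_0^1}$, the continuity of $\delta \mapsto |||\bar U|_{[T_0, T_0 + \delta]}|||_X$, which vanishes at $\delta = 0$, forces $\bar U$ into a ball of the same type on some subinterval $[T_0, T_1']$, where it coincides with $U^1$ by the uniqueness inside the ball; using that $\bar u(T_1') \in V_\kappa$ by the $V_\kappa$-regularity part of Lemma \ref{ex1old}, one restarts the argument at $T_1'$ and bootstraps agreement across the whole of $[T_0, T_1]$.
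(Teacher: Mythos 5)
Your proposal is correct and follows essentially the same route as the paper: a Banach fixed point argument for $\tT^{T_0}$ on a small ball in $\hat X_{T_0,T_1,u_0^1}$, using Lemma \ref{ex1old} for invariance, \eqref{sic} for the contraction, and the approximation/closedness of $\hat X_{T_0,T_1,u_0^1}$ to keep the iterates (and hence the fixed point) in that space; your extra care about uniqueness beyond the ball, via a restart/bootstrap, is a point the paper glosses over. The only substantive difference is bookkeeping: the paper fixes $\Delta T_1=K^{-1}$ and the explicit system \eqref{ex8}, together with the quadratic-root radius $R_1$ and the resulting bound \eqref{ric} on $|u^1(T_1)|_{V_\kappa}$, precisely so that the same constants can be iterated in Theorem \ref{tglobal}; your ad hoc choice of $R$ and $\delta$ proves this statement but would need to be re-quantified before feeding into the concatenation argument.
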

\begin{proof}
Define $|u_0^1|_{V_\kappa}=:\rho_0$. Let us consider $K(\rho_0)\geq 1$ such that for $K\ge K(\rho_0)$, the following relations are fulfilled:
\begin{align}\begin{split}\label{ex8}
&\rho_0+2cK^{-\beta^\prime}< K^{1-\beta^\prime},\\
    &4c^2 K^{-\beta^\prime-\beta}(K^{\beta-\kappa}K^{1-\beta^\prime}+K^{\beta-\beta^\prime})< 1,
    \\
    &cK^{\beta-\beta^\prime}(1+2K^{-2\beta}(8c^2K^{2\beta-2\kappa}K^{2-2\beta^\prime}+8c^2K^{2\beta-2\beta^\prime}))<\frac12,\\
    &cK^{-\beta^\prime}+cK^{-\beta^\prime-2\beta}
    (8c^2K^{2\beta-2\kappa}K^{2-2\beta^\prime}+8c^2K^{2\beta-2\beta^\prime})< 2cK^{-\beta^\prime}.
    \end{split}
\end{align}
Note that $\rho_0< K^{1-\beta^\prime}$ and that the last three inequalities are true due to the assumption $\kappa+\beta^\prime>1$ and {\bf(H1)}.

In what follows we prove that (\ref{ex8}) ensures that $\tT^{T_0}$ maps a ball $B_{\hat X_{T_0,T_1,u_0^1}}(R_1)$ into itself and it is a contraction on it, for $T_0$ given in Theorem \ref{grds_t2}, for a small enough $T_1$ and for an appropriate radius $R_1$. In particular, let us take $T_1=T_0+\Delta T_1$ where $\Delta T_1=K^{-1}$. Then,  by Lemma \ref{ex1old} we have
\begin{equation*}
    |||{\mathcal T}(U)|||_{X_{T_0,T_1}} \le c(\Delta T_1^{\kappa-\beta}\rho_0+\Delta T_1^{\beta^\prime-\beta}+\Delta T_1^{\beta^\prime+\beta}|||U|||_{X_{T_0,T_1}}^2).
\end{equation*}
Hence, to find a ball $B_{\hat X_{T_0,T_1,u_0^1}}(R_1)$ that will be mapped into itself we calculate the minor root $R_1$ of the quadratic equation
\begin{equation}\label{2eq}
    x=c(\Delta T_1^{\kappa-\beta}\rho_0+\Delta T_1^{\beta^\prime-\beta}+\Delta T_1^{\beta^\prime+\beta}x^2)
\end{equation}
which, according to Sohr \cite{Sohr} Page 349, is given by
\begin{equation*}
   R_1:= \frac{2c(\Delta T_1^{\kappa-\beta}\rho_0+\Delta T_1^{\beta^\prime-\beta})}{1+\sqrt{1-4c^2  \Delta T_1^{\beta^\prime+\beta}(\Delta T_1^{\kappa-\beta}\rho_0+ \Delta T_1^{\beta^\prime-\beta})}}< 2c( \Delta T_1^{\kappa-\beta}\rho_0+ \Delta T_1^{\beta^\prime-\beta}).
\end{equation*}
This root is well-defined due to the definition of $\Delta T_1$ and the first and second inequality of \eqref{ex8} , since these conditions in particular imply that
\begin{equation*}
%\label{2eq1}
    1-4c^2 \Delta T_1^{\beta^\prime+\beta}(\Delta T_1^{\kappa-\beta}\rho_0+\Delta T_1^{\beta^\prime-\beta})> 0.
\end{equation*}
Note that if $U\in B_{X_{T_0,T_1,u_0^1}}(R_1)$, then
\begin{equation*}
|||U|||_{X_{T_0,T_1}}^2\leq R_1^2 \leq 8c^2 \Delta T_1^{2\kappa -2\beta}\rho_0^2+8c^2 \Delta T_1^{2\beta^\prime -2\beta},
\end{equation*}
and thanks to (\ref{sic}) we obtain that $\mathcal{T}$ is a contraction on $B_{\hat X_{T_0,T_1,u_0^1}}(R_1)$ if
\begin{equation*}
    c \Delta T_1^{\beta^\prime-\beta}(1+2 \Delta T_1^{2\beta}(8c^2 \Delta T_1^{2\kappa -2\beta}\rho_0^2+8c^2 \Delta T_1^{2\beta^\prime -2\beta}))<\frac12.
\end{equation*}
The previous relation follows from the first and third inequality of \eqref{ex8}.

To see that $\tT(B_{\hat X_{T_0,T_1,u_0^1}})(R_1)\in \hat X_{T_0,T_1,u_0^1}$ we approximate an element $U=(u,v)\in B_{\hat X_{T_0,T_1,u_0^1}}(R_1)$ by a sequence $(u_n,(u_n\otimes\omega_n))$ from \eqref{localeq12a}. We note that $\tT_1((u_n,(u_n\otimes\omega_n)),\omega_n,u_n(0))(0)=u_n(0)\in V_\kappa$ and $\tT_1((u_n,(u_n\otimes\omega_n)),\omega_n,u_n(0))\in C_{\gamma}([0,T];V)$
for any $\gamma\in (0,1)$, see Pazy \cite{Pazy} Theorem 4.3.1. Therefore $\tT_2((u_n,(u_n\otimes\omega_n)),\omega_n,(\omega_n\otimes_S \omega_n),u_n(0))$ can be defined as $(\tT_1(u_n,(u_n \otimes\omega_n)),\omega_n, u_n(0))\otimes \omega_n)$ given by (\ref{grds_eq3b}). The previous considerations mean that $\tT((u_n, (u_n\otimes \omega_n)\omega_n, u_n(0))\otimes \omega_n)$ belongs to the set (\ref{localeq12a}). Now it suffices to use the first inequality of (i) and (ii) in Lemma \ref{ex3} to ensure that $\tT(U,\omega,(\omega\otimes_S\omega),u_0)\in \hat X_{T_0,T_1,u_0^1}$.

\smallskip

Therefore, we obtain that $\tT^{T_0}$ has a fixed point $U^1=(u^1,v^1)\in B_{\hat X_{T_0,T_1,u_0^1}}(R_1)$. Furthermore, by Lemma \ref{ex1old} and the first and last inequality of (\ref{ex8}), we get
\begin{align}\label{ric}
\begin{split}
    |u^1(T_1)|_{V_\kappa}&\le c \Delta T_1^{\beta^\prime}(1+ \Delta T_1^{2\beta}|||U^1|||_{X_{T_0,T_1}}^2)+|u_0^1|_{V_\kappa}\\
    &\leq c \Delta T_1^{\beta^\prime}+c \Delta T_1^{\beta^\prime+2\beta}(8c^2 \Delta T_1^{2\kappa-2\beta} \rho_0^2+8c^2\Delta T_1^{2\beta^\prime-2\beta})+\rho_0\\
    &< 2c \Delta T_1^{\beta^\prime}+\rho_0 <K^{1-\beta^\prime}.
    \end{split}
    \end{align}
\end{proof}

We want to point out that in the previous result it has been crucial to use the seminorm $|||\cdot|||_X$ instead of the norm $\|\cdot\|_X$. If we had used directly the norm then we would not have found an appropriate $K$ fulfilling (\ref{ex8}). \\

Up to now, we have obtained the existence of a local solution $U^1=(u^1,v^1)\in \hat X_{T_0,T_1,u_0^1}$ with initial condition $u_0^1 \in V_\kappa$, that is, we have found a solution whose path component $u^1$ is defined on the interval $I_1=[T_0,T_1]$ for $T_1=T_0+\frac{1}{K}$. Now we would like to repeat the same arguments than in Theorem \ref{tex2} in time intervals
$$I_2=[T_1,T_2]:=[T_1,T_1+\frac{1}{2K}], \quad \cdots \cdots \quad I_i:=[T_{i-1},T_i]=[T_{i-1},T_{i-1}+\frac{1}{iK}],$$
for $i=2,\cdots,i^\ast$ for appropriate $i^\ast$, see Theorem \ref{tglobal}.

%%%%%%%%%%

%%%%%%%%%

\begin{theorem} \label{tglobal} Let $T>0$ be some number. Suppose that {\bf (H)} holds and $\kappa+\beta^\prime>1$, $\kappa\leq 1$. Suppose also that $u_0^1\in V_\kappa$ is given by $u^0(T_0)=\tT_1((u^0,v^0),\omega,u_0)(T_0),\,u_0\in V$, see Theorem \ref{grds_t2}. Then there exist $i^\ast$  intervals  $[T_0,T_1],\cdots [T_{i^\ast-1},T_{i^\ast}]$  where $T_{i^\ast-1} <T\le T_{i^\ast}$ such that on any of these intervals
\[
U^i=\tT^{T_{i-1}}(U^i,\omega,(\omega\otimes_S\omega),u^{i-1}(T_{i-1}))
\]
has a unique solution $U^i=(u^i, v^i) \in \hat X_{T_{i-1},T_{i},u^{i-1}(T_{i-1})}$. Therefore, each of these local solutions can be approximated by  the classical solutions  $U^i_n=(u_n^i, (u_n^i \otimes \omega_n))$ that satisfy
\[
U_n^i=\tT^{T_{i-1}}(U_n^i,\omega_n,(\omega_n\otimes_S\omega_n),u_n^{i-1}(T_{i-1})).
\]
\end{theorem}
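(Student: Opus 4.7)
My plan is to iteratively invoke Theorem \ref{tex2} on the intervals $I_i=[T_{i-1},T_i]$ with lengths $\Delta T_i=1/(iK)$, and then to concatenate the pieces as in Definition \ref{concatenation}. Because $T_{i^*}-T_0=\frac{1}{K}\sum_{j=1}^{i^*}\frac{1}{j}$ diverges to $+\infty$, for every fixed $K$ there is a smallest finite index $i^*$ with $T_{i^*}\ge T$, and this will be the required stopping index of the iteration. The initial datum is $u_0^1=u^0(T_0)$, which lies in $V_\kappa$ by Theorem \ref{grds_t2}; set $\rho_0:=|u_0^1|_{V_\kappa}$.

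The central technical point is to choose one single $K$ (depending only on $T,\rho_0$, on the absolute constant $c$, and on $|||\omega|||_{\beta^\prime}$, $\|(\omega\otimes_S\omega)\|_{2\beta^\prime}$) for which, at every step $i=1,\ldots,i^*$, the four inequalities of (\ref{ex8}) with $K$ replaced by $1/\Delta T_i=iK$ and $\rho_0$ replaced by $\rho_{i-1}:=|u^{i-1}(T_{i-1})|_{V_\kappa}$ hold simultaneously. Applying the second estimate in Lemma \ref{ex1old} at step $i$ with $U=U^i$ and combining with the quadratic bound on $|||U^i|||_X$ of Theorem \ref{tex2} (of order $\Delta T_i^{\kappa-\beta}\rho_{i-1}+\Delta T_i^{\beta^\prime-\beta}$) yields the recursion $\rho_i\le \rho_{i-1}+c\Delta T_i^{\beta^\prime}$ and hence the a priori bound
\begin{equation*}
\rho_i\le \rho_0+\frac{C}{K^{\beta^\prime}}\sum_{j=1}^{i}j^{-\beta^\prime}\le \rho_0+\frac{C\,i^{1-\beta^\prime}}{(1-\beta^\prime)\,K^{\beta^\prime}},\qquad i=1,\ldots,i^*.
\end{equation*}
Under the hypothesis $\kappa+\beta^\prime>1$ one checks by direct computation that after the substitution $K\to iK$ the left-hand sides of the inequalities (2)--(4) of (\ref{ex8}) reduce to positive powers of $(iK)^{-1}$ (the relevant exponents being $1-2\beta^\prime-\kappa$, $\beta-\beta^\prime$, $-2\beta^\prime$ and $2-3\beta^\prime-2\kappa$, all of which are negative), so those three inequalities are valid for every $i\ge 1$ as soon as $K$ is large enough at $i=1$. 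Inequality (1), namely $\rho_{i-1}+2c(iK)^{-\beta^\prime}<(iK)^{1-\beta^\prime}$, is satisfied uniformly in $i$ once $K>\rho_0^{1/(1-\beta^\prime)}$ and $K$ dominates the absolute constants, because both the constant $\rho_0$ and the sub-linear term $Ci^{1-\beta^\prime}/K^{\beta^\prime}$ in $\rho_{i-1}$ are dominated by the right-hand side $K^{1-\beta^\prime}i^{1-\beta^\prime}$. Thus a single sufficiently large $K$ works for every $i\le i^*$.

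With this $K$ fixed I proceed by induction on $i$. The base case is exactly Theorem \ref{tex2} applied with initial value $u_0^1=u^0(T_0)$, and the inductive step applies the same theorem to $\tT^{T_{i-1}}$ with initial value $u^{i-1}(T_{i-1})\in V_\kappa$ and step size $\Delta T_i$, producing $U^i\in\hat X_{T_{i-1},T_i,u^{i-1}(T_{i-1})}$ together with the $V_\kappa$-estimate $\rho_i\le\rho_{i-1}+c\Delta T_i^{\beta^\prime}$ needed to close the induction. The iteration terminates at the smallest $i^*$ with $T_{i^*}\ge T$. For the approximation assertion, the very same scheme is carried out with $(\omega_n,(\omega_n\otimes_S\omega_n))$ in place of $(\omega,(\omega\otimes_S\omega))$: by {\bf(H3)} the norms $|||\omega_n|||_{\beta^\prime}$ and $\|(\omega_n\otimes_S\omega_n)\|_{2\beta^\prime}$ are eventually uniformly bounded in $n$, so the same $K$ still fulfils (\ref{ex8}) for every $n$ and each $U_n^i$ is the unique fixed point of $\tT^{T_{i-1}}(\cdot,\omega_n,(\omega_n\otimes_S\omega_n),u_n^{i-1}(T_{i-1}))$ in the same ball as $U^i$. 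A standard fixed-point-stability argument combining Lemma \ref{ex3}(ii) (continuity of $\tT$ in its data, uniform on bounded sets) with the contraction estimate (\ref{sic}) and the inductive fact $u_n^{i-1}(T_{i-1})\to u^{i-1}(T_{i-1})$ in $V_\kappa$ (which starts from the last assertion of Theorem \ref{grds_t2} and propagates through the second inequality of Lemma \ref{ex3}(i)) then yields $U_n^i\to U^i$ in $\hat X_{T_{i-1},T_i}$. The anticipated main obstacle is the entanglement between $K$ and $i^*$, since $i^*$ itself depends on $K$; this is resolved by the sub-linear $i^{1-\beta^\prime}$ growth of the a priori bound on $\rho_i$, which is exactly compatible with the $i^{1-\beta^\prime}$ growth of the right-hand side of condition (1) after the substitution $K\to iK$.
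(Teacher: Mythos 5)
Your proposal is correct and follows essentially the same route as the paper: the authors likewise generalize the inequalities (\ref{ex8}) by substituting $K\mapsto Ki$ (their display (\ref{ex7})), propagate the a priori bound $|u^{i}(T_{i})|_{V_\kappa}<\rho_0+\sum_{j\le i}2c(Kj)^{-\beta^\prime}\lesssim \rho_0+K^{-\beta^\prime}i^{1-\beta^\prime}/(1-\beta^\prime)<(Ki)^{1-\beta^\prime}$ by induction, use the divergence of $\sum_i(iK)^{-1}$ to terminate at a finite $i^\ast$, and prove the approximation claim by the same triangle-inequality/fixed-point-stability argument via Lemma \ref{ex3}. The one point you resolve explicitly that the paper leaves implicit — that a single $K$ chosen at $i=1$ makes all four conditions hold for every $i$ because the substituted exponents are negative under $\kappa+\beta^\prime>1$ — is handled correctly.
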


\begin{proof} The proof consists of applying an induction procedure, with starting step given in the proof of Theorem \ref{tex2}. In order to do that, assume the following generalization of the assumption (\ref{ex8}): $K(\rho_0)\geq 1$ is such that for $K\ge K(\rho_0)$ and $i\in\mathbb{N}$ the following inequalities are fulfilled
\begin{align}\label{ex7}
\begin{split}
\rho_0&+\sum_{j=1}^{i}2c(Kj)^{-\beta^\prime}\le \rho_0+2cK^{-\beta^\prime}\frac{1}{1-\beta^\prime} i^{1-\beta^\prime}< (Ki)^{1-\beta^\prime},\\
    &4c^2 (Ki)^{-\beta^\prime-\beta}((Ki)^{\beta-\kappa}(Ki)^{1-\beta^\prime}+(Ki)^{\beta-\beta^\prime})< 1,\\
    &c(Ki)^{\beta-\beta^\prime}(1+2(Ki)^{-2\beta}(8c^2(Ki)^{2\beta-2\kappa}(Ki)^{2-2\beta^\prime}+8c^2(Ki)^{2\beta-2\beta^\prime}))<\frac12,\\
    &c(Ki)^{-\beta^\prime}+c(Ki)^{-\beta^\prime-2\beta}
    (8c^2(Ki)^{2\beta-2\kappa}(Ki)^{2-2\beta^\prime}+8c^2(Ki)^{2\beta-2\beta^\prime})\\
    &< 2c(Ki)^{-\beta^\prime}.
\end{split}
\end{align}
Once again, we stress that these inequalities hold due to the condition $\kappa+\beta^\prime>1$.

Assume that we have obtained pieces of local solutions $U^1=(u^1,v^1)\in \hat X_{T_0,T_1,u^{0}(T_0)}$, ..., $U^{i-1}=(u^{i-1},v^{i-1})\in \hat X_{T_{i-2},T_{i-1},u^{i-2}(T_{i-2})}$ such that
\begin{align}\label{coni}
|u^{i-1}(T_{i-1})|_{V_\kappa}< \rho_0+\sum_{j=1}^{i-1} 2c (Kj)^{-\beta^\prime}
\end{align}
for $i=2, 3, \cdots$, and $T_{i-1}<T$. Note that (\ref{coni}) is nothing but the generalization of the property (\ref{ric}).  Now we give some details of how to obtain the next local solution $U^{i}=(u^{i},v^{i})$:  we set $T_i=T_{i-1}+\Delta T_{i}$, $\Delta T_{i}=(Ki)^{-1}$ if $T_i< T$, and $T_i=T$ in other case. Then, by the first inequality of \eqref{ex7} we know that $|u^{i-1}(T_{i-1})|_{V_\kappa}<{ (Ki)}^{1-\beta^\prime}$.
Following the steps of Theorem \ref{tex2}, \eqref{ex7} together with the Banach fixed point theorem ensure the existence of a local solution $U^i=(u^i, v^i)\in B_{\hat X_{T_{i-1},T_i,u^{i-1}(T_{i-1})}}(R_{i})$ which is unique, where $R_{i}$ is the minor root of \eqref{2eq} when replacing $\rho_0$ by $(Ki)^{1-\beta^\prime}$ and $\Delta T_1$ by $\Delta T_{i}$. In addition, (\ref{coni}) and the first inequality and the last one of (\ref{ex7}) imply

\begin{align*}
|u^i(T_{i})|_{V_\kappa}&\le c \Delta T_i^{\beta^\prime}(1+ \Delta T_i^{2\beta}|||U|||_{X_{T_{i-1},T_i}}^2)+|u^{i-1}(T_{i-1})|_{V_\kappa}\\
    &\leq c \Delta T_i^{\beta^\prime}+c \Delta T_i^{\beta^\prime+2\beta}(8c^2 \Delta T_i^{2\kappa-2\beta} \Delta T_i^{2\beta^\prime-2}+8c^2\Delta T_i^{2\beta^\prime-2\beta})+|u^{i-1}(T_{i-1})|_{V_\kappa}\\
    &< 2c (Ki)^{-\beta^\prime}+\rho_0 +\sum_{j=1}^{i-1} 2c (Kj)^{-\beta^\prime}=\sum_{j=1}^{i} 2c (Kj)^{-\beta^\prime}+\rho_0 <(Ki)^{1-\beta^\prime}.
    \end{align*}

%%%%%%%%%%%%%%%%%%%
%%%%%%%%%%%%%%%%%%%
Since $\sum_i (iK)^{-1}=\infty$ there is an $i^\ast\in\NN$ such that $T_{i^\ast}\wedge T= T$.\\

Finally for the convergence observe that
\begin{align*}
&|||U_n^i-U^i|||_X\\
=&|||\tT^{T_{i-1}}(U_n^i,\omega_n,(\omega_n\otimes_S\omega_n),u_n^{i-1}(T_{i-1}))-\tT^{T_{i-1}}(U^i,\omega,(\omega\otimes_S\omega),u_n^{i-1}(T_{i-1}))|||_X\\
\le &|||\tT^{T_{i-1}}(U_n^i,\omega_n,(\omega_n\otimes_S\omega_n),u_n^{i-1}(T_{i-1}))-\tT^{T_{i-1}}(U_n^i,\omega_n,(\omega_n\otimes_S\omega_n),u^{i-1}(T_{i-1}))|||_X\\
+&|||\tT^{T_{i-1}}(U_n^i,\omega_n,(\omega_n\otimes_S\omega_n),u^{i-1}(T_{i-1}))-\tT^{T_{i-1}}(U_n^i,\omega,(\omega\otimes_S\omega),u^{i-1}(T_{i-1}))|||_X\\
+&|||\tT^{T_{i-1}}(U_n^i,\omega,(\omega\otimes_S\omega),u^{i-1}(T_{i-1}))-\tT^{T_{i-1}}(U^i,\omega,(\omega\otimes_S\omega),u^{i-1}(T_{i-1}))|||_X.
\end{align*}
Now we can apply Lemma \ref{ex3} to estimate these terms:
\begin{align*}
|||U_n^i & -U^i|||_X\leq  c \Delta T_i^{\beta^\prime-\beta}(1+2\Delta T_i^{2\beta}||| U_n^i|||_X^2)|u_n^{i-1}(T_{i-1})-u^{i-1}(T_{i-1})|_{V_\kappa}\\
+&c\Delta T_i^{\kappa-\beta}|u_n^{i-1}(T_{i-1})-u^{i-1}(T_{i-1})|_{V_\kappa}\\
+&|||\tT^{T_{i-1}}(U_n^i,\omega_n,(\omega_n\otimes_S\omega_n),u^{i-1}(T_{i-1}))-\tT^{T_{i-1}}(U_n^i,\omega,(\omega\otimes_S\omega),u^{i-1}(T_{i-1}))|||_X\\
+&c \Delta T_i^{\beta^\prime-\beta}(1+\Delta T_i^{2\beta}(||| U_n^i|||_X^2+||| U^i|||_X^2)|||U_n^i-U^i|||_X.
\end{align*}

For the solution $U^{i-1}$on the interval  $[T_{i-2},T_{i-1}]$ we can assume that $|u_n^{i-1}(T_{i-1})-u^{i-1}(T_{i-1})|_{V_\kappa}$ tends to zero,  which gives the convergence of the first and second terms. This is also true for $U^0$ considered in Section 4. The convergence of the third expression follows by Lemma \ref{ex3} (ii). For the last expression, we mention that $U_n^i$ satisfies similar a priori estimates as $U^i$, which follow by Lemma \ref{ex1old}. In particular, thanks to  {\bf (H3)} the constant $c$ for $U^i_n$ which depends on $\omega_n,\,(\omega_n\otimes_S\omega_n)$ can be chosen very close to the constant for $U^i$, and therefore $|||U_n ^i|||_X$ is very close to $|||U^i|||_X$ such that
\begin{equation*}
  c \Delta T_i^{\beta^\prime-\beta}(1+\Delta T_i^{2\beta}(||| U_n^i|||_X^2+||| U^i|||_X^2))\le q
\end{equation*}
where $q<1$. Summarizing we find the desired convergence, since $\|U_n^i-U^i\|_X$ is equivalent to
\begin{equation*}
|u_n^i(T_{i-1})-u^i(T_{i-1})|+|||U_n^i-U^i|||_X
\end{equation*}
which can be estimated by $|u_n^i(T_{i-1})-u^i(T_{i-1})|_{V_\kappa}+|||U_n^i-U^i|||_X$, and this sum goes to zero.

\end{proof}

Then we can prove the following result:

\begin{lemma}\label{BC}
The iterated concatenation $U=(u,v)$ of $\,U^i=(u^i, v^i)$ for $i=0,1,\cdots, i^\ast$ (in the sense of Definition \ref{concatenation}) is such that $U\in C_{\beta,\sim}([0,T];V)\times C_{\beta+\beta^\prime,\sim}(\Delta_{0,T};V\otimes V)$.
\end{lemma}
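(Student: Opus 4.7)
The plan is to prove the estimates defining the $\|\cdot\|_{\beta,\sim}$ and $\|\cdot\|_{\beta+\beta^\prime,\sim}$ norms of the concatenation by a case analysis on how $(s,t)$ sits relative to the breakpoints $0<T_0<T_1<\cdots<T_{i^\ast}$. Continuity of the concatenation on each sub-interval is immediate, and at the breakpoints the gluing in Definition \ref{concatenation} together with continuity of $\omega$ and the matching values $u^{i}(T_{i-1})=u^{i-1}(T_{i-1})$ yield continuity on $[0,T]$; analogously $v(s,s)=0$ on each piece, hence on $\Delta_{0,T}$. A key preliminary observation, used throughout, is that the iterated $v$ satisfies the global Chen-equality
\[
v(s,t)=v(s,r)+v(r,t)+(u(r)-u(s))\otimes_V(\omega(t)-\omega(r)),\qquad 0<s\le r\le t\le T,
\]
which follows inductively from Definition \ref{concatenation} and is recorded below in Lemma \ref{BW}.

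For $u$, when $s,t\in[T_{i-1},T_i]$ with $i\ge 1$ one has $u=u^i\in C_\beta$ on that piece, hence $s^\beta |u(t)-u(s)|/(t-s)^\beta\le T^\beta |||u^i|||_{\beta}$; when $s,t\in[0,T_0]$ the estimate is exactly the $C_{\beta,\sim}$-norm of $u^0$. For the cross-piece case I would first check that $u$ is uniformly $\beta$-H\"older on $[T_0,T]$ by a standard telescoping over the finitely many breakpoints, using $(a+b)^\beta\le 2\max(a,b)^\beta$ for close $(s,t)$ and boundedness of the continuous $u$ to handle widely separated $(s,t)$. Then for $s\in(0,T_0]$ and $t>T_0$ I split $|u(t)-u(s)|\le|u(t)-u(T_0)|+|u(T_0)-u(s)|$: the first summand is $\le C(t-T_0)^\beta\le C(t-s)^\beta$ by the previous step, while
\[
s^\beta|u(T_0)-u(s)|\le \|u^0\|_{\beta,\sim}(T_0-s)^\beta\le\|u^0\|_{\beta,\sim}(t-s)^\beta,
\]
which yields the desired bound after dividing by $(t-s)^\beta$.

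For $v$ the strategy is analogous, using Chen to decompose cross-piece increments. The single-piece cases reduce to $\|v^i\|_{\beta+\beta^\prime}$ (for $i\ge 1$) or $\|v^0\|_{\beta+\beta^\prime,\sim}$ (for $i=0$). For $s,t$ in different sub-intervals of $[T_0,T]$, an induction on the number of spanned pieces, applying Chen at the first breakpoint $T_i\ge s$ and using $|u(T_i)-u(s)|\,|\omega(t)-\omega(T_i)|\le |||u|||_{\beta}|||\omega|||_{\beta^\prime}(t-s)^{\beta+\beta^\prime}$ for the resulting tensor term, gives $\|v(s,t)\|\le C(t-s)^{\beta+\beta^\prime}$. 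Finally, for $s\in(0,T_0]$ and $t>T_0$ I apply Chen with $r=T_0$:
\[
v(s,t)=v(s,T_0)+v(T_0,t)+(u(T_0)-u(s))\otimes_V(\omega(t)-\omega(T_0)),
\]
and estimate termwise: the first contributes $\|v^0\|_{\beta+\beta^\prime,\sim}\,s^{-\beta}(T_0-s)^{\beta+\beta^\prime}$, the second is handled by the case already treated on $[T_0,T]$, and the cross-tensor is bounded by $\|u^0\|_{\beta,\sim}|||\omega|||_{\beta^\prime}\,s^{-\beta}(T_0-s)^\beta(t-T_0)^{\beta^\prime}$. Since $(T_0-s)^\gamma(t-T_0)^\eta\le(t-s)^{\gamma+\eta}$ in all these terms, multiplying by $s^\beta$ and dividing by $(t-s)^{\beta+\beta^\prime}$ produces a uniform bound. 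The main obstacle is precisely this last cross-piece case for $v$: the singular weight $s^\beta$ inherited from $U^0$ must be routed only to the term that actually touches $[0,T_0]$, and choosing the Chen intermediate point to be exactly $T_0$ is what makes the bookkeeping close.
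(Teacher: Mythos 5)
Your proof is correct and follows essentially the same route as the paper: both arguments telescope the increment over the finitely many breakpoints via iterated application of the Chen--equality, bound each resulting piece by the corresponding $\|v^i\|_{\beta+\beta'}$ or $|||u^i|||_{\beta}\,|||\omega|||_{\beta'}$ norm, and route the singular weight $s^\beta$ exclusively to the $[0,T_0]$ piece where the $\sim$-norms of $U^0$ absorb it. The paper performs the full telescoping in one pass rather than your two-stage organization (first $[T_0,T]$, then splicing at $r=T_0$), but this is a cosmetic difference, not a different method.
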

\begin{proof}
Given $(s,t)\in\Delta_{0,T}$ we know that
\begin{equation*}
  T_{i_s-1}\le s\le T_{i_s} < \cdots < T_{i_t-1} < t\le T_{i_t}
\end{equation*}
where  $I_{t_{i_s}}=[T_{i_s-1},T_{i_s}]$ and so on. By an iterated application of the Chen--equality we get
\begin{align*}
  v(s,t) = &v^{{i_s}}(s,T_{i_s})+v^{{i_s+1}}(T_{i_s},T_{i_s+1})+\cdots +v^{{i_t}}(T_{i_t-1},t) \\
   & +(u^{{i_s}}(T_{i_s})-u^{{i_s}}(s))\otimes_V(\omega(t)-\omega(T_{i_s}))\\
   &+(u^{{i_s+1}}(T_{i_s+1})-u^{{i_s+1}}(T_{i_s}))\otimes_V(\omega(t)-\omega(T_{i_s+1}))\\
  & \quad \vdots  \\
&+(u^{{i_t-1}}(T_{i_t-1})-u^{{i_t-1}}(T_{i_t-2}))\otimes_V(\omega(t)-\omega(T_{i_t-1})).
\end{align*}
Now taking the $\|\cdot\|$-norm, applying the triangle inequality, multiplying the expression by $s^\beta/(t-s)^{\beta+\beta^\prime}$ and taking the supremum for $0<s< t<T$ we obtain
\begin{align*}
  \|v\|_{\beta+\beta^\prime,\sim}  \le &\|v^0\|_{\beta+\beta^\prime,\sim,\Delta_{I_0}}+\|u^0\|_{\beta,\sim,I_0}|||\omega|||_{\beta^\prime,[0,T]}\\
   & +T^\beta \sum_{i=1}^{i^\ast}\|v^i \|_{\beta+\beta^\prime,\bar \Delta_{I_i}}+T^\beta \sum_{i=1}^{i^\ast}|||u^i|||_{\beta,I_i}|||\omega|||_{\beta^\prime,[0,T]}<\infty.
\end{align*}
Moreover, for the first component of $U$ we get
\begin{align*}
u(t)-u(s)=&u^{{i_t}}(t)-u^{{i_s}}(s)\\
=&(u^{{i_t}}(t)- u^{{i_t}}(T_{i_t-1})) +(u^{{i_t}}(T_{i_t-1}) - u^{{i_t-1}}(T_{i_t-2}))+\cdots\\
&+ (u^{{i_s}}(T_{i_s})-u^{{i_s}}(s))
\end{align*}
and therefore
\begin{align*}
  \|u\|_{\beta,\sim} & \le \|u^0\|_{\beta,\sim,I_0}+ \sum_{i=1}^{i^\ast} \sup_{t\in [T_{i-1},T_i]}|u^i( t)|+ T^\beta \sum_{i=1}^{i^\ast}|||u^i|||_{\beta,I_i}\\
 & \le \|u^0\|_{\beta,\sim,I_0}+\sum_{i=1}^{i^\ast}|u^i(T_{i-1})|_{V_\kappa}+ 2 T^\beta \sum_{i=1}^{i^\ast}|||u^i|||_{\beta,I_i}.
\end{align*}
Note that in the last expression we have estimated $ \sup_{t\in [T_{i-1},T_i]}|u^i( t)|$ in terms of $|u^i(T_{i-1})|_{V_\kappa}$ and $|||u^i|||_\beta$.
\end{proof}

\begin{lemma}\label{BW}
The iterated concatenation $U=(u,v)$ of $\,U^i=(u^i, v^i)$ for $i=0,1,\cdots, i^\ast$ (in the sense of Definition \ref{concatenation}) verifies $U\in \hat W_{0,T}$.
\end{lemma}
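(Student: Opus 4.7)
The plan is to construct an explicit sequence in the defining set \eqref{localeq12} of $\hat W_{0,T}$ that converges to the iterated concatenation $U$, using the already-established approximations of each individual piece. Fix once and for all a common sequence $(\omega_n,(\omega_n\otimes_S\omega_n))$ satisfying \textbf{(H3)} on $[0,T]$. By Theorem \ref{grds_t2} for $i=0$ and by Theorem \ref{tglobal} for $1\le i\le i^\ast$, each piece admits classical approximations $U_n^i=(u_n^i,(u_n^i\otimes\omega_n))$ with $u_n^0(0)\in D((-A))$, $u_n^i\in C_\gamma([T_{i-1},T_i];V)$, satisfying the matching condition $u_n^i(T_{i-1})=u_n^{i-1}(T_{i-1})$, together with $|||U_n^i-U^i|||\to 0$ on the corresponding subinterval and $|u_n^i(T_i)-u^i(T_i)|_{V_\kappa}\to 0$.

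First I would verify that the iterated concatenation $U_n=(u_n,v_n)$ of the $U_n^i$ (in the sense of Definition \ref{concatenation}) lies in the set \eqref{localeq12}. Piecewise $C_\gamma$-regularity of $u_n$ with matching endpoint values gives global $\gamma$-H\"older continuity by a standard triangle inequality, and $u_n(0)=u_n^0(0)\in D((-A))$. Since $\omega_n$ is smooth, additivity of the Young integral yields, for $s<b<t$,
\[(u_n\otimes\omega_n)(s,t)=(u_n\otimes\omega_n)(s,b)+(u_n\otimes\omega_n)(b,t)+(u_n(b)-u_n(s))\otimes_V(\omega_n(t)-\omega_n(b)),\]
which is precisely the formula defining $v_n$ at cross points in Definition \ref{concatenation}. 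Hence $v_n=(u_n\otimes\omega_n)$ globally, and $U_n$ is an element of \eqref{localeq12}.

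The core step is to prove $U_n\to U$ in $C_{\beta,\sim}([0,T];V)\times C_{\beta+\beta^\prime,\sim}(\Delta_{0,T};V\otimes V)$. For arguments $(s,t)$ lying inside a single subinterval $[T_{i-1},T_i]$ this reduces to the convergence given by Theorem \ref{tglobal}: on $[0,T_0]$ the tilde-norm convergence is provided by the definition of $\hat W_{0,T_0}$, while on each later piece the weight $s^\beta$ is bounded above and below by positive constants, so convergence in $|||\cdot|||_X$ together with convergence of the endpoint values in $V_\kappa$ yields convergence in the tilde seminorms. For cross-interval arguments $(s,t)$ I would iterate the Chen--equality, as in the proof of Lemma \ref{BC}, to decompose $v_n(s,t)-v(s,t)$ into a sum of single-interval differences plus rank-one tensor remainders of the form $(u_n(T_j)-u(T_j))\otimes_V(\omega(t)-\omega(T_j))$, all of which vanish in the limit since any cross-interval pair satisfies $t-s\ge\min_j(T_j-T_{j-1})>0$, so no negative power of $(t-s)$ appears.

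The main obstacle is the singular weight $s^\beta$ near $s=0$ for cross-interval $v$-terms, i.e. those with $s\in[0,T_0]$ and $t$ in a later subinterval. I would split at $T_0$ via Chen--equality, reducing the delicate factor to the tilde-norm convergence on the first piece (delivered by $\hat W_{0,T_0}$) multiplied by the non-singular H\"older norm of $\omega$ on the remainder of $[0,T]$, while the corresponding rank-one tensor involves $u_n^0(T_0)-u^0(T_0)$, which tends to $0$ even in $V_\kappa$ by Theorem \ref{grds_t2}. Once these estimates are assembled, the closedness of $\hat W_{0,T}$ as a subset of $C_{\beta,\sim}\times C_{\beta+\beta^\prime,\sim}$ yields $U\in\hat W_{0,T}$.
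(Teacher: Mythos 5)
Your proposal is correct and follows essentially the same route as the paper: approximate each piece $U^i$ by the classical solutions from Theorems \ref{grds_t2} and \ref{tglobal}, concatenate these to obtain a classical solution $U_n=(u_n,(u_n\otimes\omega_n))$ lying in the defining set \eqref{localeq12}, and then prove $\|U_n-U\|_W\to 0$ via the iterated Chen--equality decomposition from Lemma \ref{BC}, which handles the singular weight $s^\beta$ by isolating the $I_0$ contributions in the $\sim$--norms. The only cosmetic difference is that you invoke closedness of $\hat W_{0,T}$ at the end, whereas membership follows directly from $U$ being a limit point of \eqref{localeq12}.
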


\begin{proof}
Since $U^i \in \hat X_{T_{i-1},T_i,u^{i-1}(T_{i-1})}$ (and $U^0\in \hat W_{0,T_0}$) there exists $U^i_n=(u^i_n, (u^i\otimes \omega_n))$ such that $u^i_n \in C_\gamma([0,T];V)$, with $u_n^i(T_{i-1})\in V_\kappa$, $(\omega_n,(\omega_n\otimes_S \omega_n))$ fulfilling {\bf(H3)}, such that
\begin{equation}\label{app}
\lim_{n\to \infty} (u^i_n , u^i_n \otimes \omega_n )=(u^i, v^i ) \quad \text{ in $C_{\beta}(I_i;V)\times C_{\beta+\beta^\prime}(\bar \Delta_{I_i};V\otimes V)$},
\end{equation}
see (\ref{localeq12}). When $i=0$, taking $u_n^0(0)\in D((-A))$,
\begin{align*}
\lim_{n\to \infty} (u^0_n , u^0_n \otimes \omega_n )=(u^0, v^0 ) \quad \text{ in $C_{\beta,\sim}(I_0;V)\times C_{\beta+\beta^\prime,\sim}(\Delta_{I_0};V\otimes V)$}.
\end{align*}
Moreover, thanks to Theorem \ref{BC} we can choose for these approximating sequences the classical solutions
\begin{equation*}
 (u_n^i,u_n^i\otimes \omega_n)= U_n^i=\tT(U_n^i,\omega_n,(\omega_n\otimes_S\omega_n),u_n^i(T_{i-1})).
\end{equation*}
Furthermore, according to Definition \ref{concatenation}, we can consider the concatenation $U_n=(u_n,v_n)$. Then $u_n$ can be seen as the classical solution to (\ref{grds_eq011}) driven by the piecewise linear continuous path $\omega_n$ on $[0,T]$. It is easy to see that $u_n\in C_{\beta,\sim}([0,T];V)$, and by the smoothness of $\omega_n$
\begin{equation*}
  v_n(s,t)=(u_n\otimes \omega_n)(s,t)=\int_s^t(u_n(r)-u_n(s))\otimes_Vd\omega_n
\end{equation*}
such that $U_n$ satisfies the operator equation
\begin{align}\label{conv2}
U_n=\tT(U_n,\omega_n,(\omega_n\otimes_S\omega_n),u_n(0))
\end{align}
on $[0,T]$ where $(u_n(0))_{n\in\NN}$ tends to $u_0$ in $V$.
Let us show that
\begin{align}\label{conv}
\lim_{n\to \infty}\|U_n-U\|_W=0,
\end{align}
which in particular implies that $U\in \hat W_{0,T}$. To this end, let us focus on the norm of the difference of area components. In fact, similarly to the proof of Lemma \ref{BC}, we have
\begin{align*}
  \|v-v_n\|_{\beta+\beta^\prime,\sim}  \le& \|v^0-v_n^0\|_{\beta+\beta^\prime,\sim,\Delta_{I_0}}+\|u^0-u_n^0\|_{\beta,\sim,I_0}|||\omega|||_{\beta^\prime,[0,T]}
\\&+\|u_n^0\|_{\beta,\sim,I_0}|||\omega_n-\omega|||_{\beta^\prime,[0,T]}
  \\
   & +T^\beta \sum_{i=1}^{i^\ast}\|v^{i}-v_n^{i}\|_{\beta+\beta^\prime,\bar \Delta_{I_i}}+T^\beta \sum_{i=1}^{i^\ast}|||u^{i}-u_n^{i}|||_{\beta,I_i}|||\omega|||_{\beta^\prime,[0,T]}\\
&+T^\beta \sum_{i=1}^{i^\ast}|||u_n^{i}|||_{\beta,I_i}|||\omega_n-\omega|||_{\beta^\prime,[0,T]},
\end{align*}
so it suffices to take into account the convergence properties of the beginning of this lemma to conclude that $v_n\to v$ in $C_{\beta+\beta^\prime,\sim}(\Delta_{0,T},V\otimes V)$. In a similar way we obtain
\begin{equation*}
  \lim_{n\to\infty}\|u_n-u\|_{\beta,\sim}=0.
\end{equation*}
\end{proof}

Finally, we can state the global existence result when starting with a initial condition in $V$:

\begin{theorem}\label{tex2b}
Suppose Hypothesis ${\bf (H)}$ holds, and that $\kappa+\beta^\prime>1$, $\kappa \leq 1,$
and $u_0\in V$. Then there exists a unique global solution $U=(u,v)\in \hat W_{0,T}$ to $\tT(U)=U$. Therefore, the path component $u$ is a global solution to (\ref{grds_eq011}).
\end{theorem}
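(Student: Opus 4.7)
The plan is to build the global solution by concatenation: first get a local mild path–area solution starting from the $V$-valued initial datum $u_0$ on a short interval $[0,T_0]$, then use the smoothing effect of the semigroup to boot-strap the initial regularity at time $T_0$ up to $V_\kappa$, and finally iterate the $V_\kappa$-valued local existence result finitely many times to cover $[0,T]$. The pieces will then be glued together via Definition \ref{concatenation}, and the two lemmas \ref{BC} and \ref{BW} will certify that the resulting pair $U=(u,v)$ lives in the correct space and can be approximated by classical solutions.

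Concretely, I would first invoke Theorem \ref{grds_t2} to produce $U^0=(u^0,v^0)\in \hat W_{0,T_0}$ solving $U^0=\tT(U^0,\omega,(\omega\otimes_S\omega),u_0)$ and, crucially, yielding $u^0(T_0)\in V_\kappa$ via the regularity estimate \eqref{greg}. With $u_0^1:=u^0(T_0)\in V_\kappa$ as a fresh initial condition, Theorem \ref{tglobal} then provides finitely many pieces $U^i=(u^i,v^i)\in \hat X_{T_{i-1},T_i,u^{i-1}(T_{i-1})}$, $i=1,\ldots,i^\ast$, with $T_{i^\ast}\ge T$, each satisfying $U^i=\tT^{T_{i-1}}(U^i,\omega,(\omega\otimes_S\omega),u^{i-1}(T_{i-1}))$. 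Definition \ref{concatenation} glues these pieces (together with $U^0$) into a single pair $U=(u,v)$ on $[0,T]$. Lemma \ref{BC} already checks that $U\in C_{\beta,\sim}([0,T];V)\times C_{\beta+\beta^\prime,\sim}(\Delta_{0,T};V\otimes V)$, and Lemma \ref{BW} upgrades this to $U\in\hat W_{0,T}$ by exhibiting an explicit approximation by classical solutions $U_n$ driven by the smooth paths $\omega_n$ from hypothesis \textbf{(H3)}.

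It remains to verify $\tT(U,\omega,(\omega\otimes_S\omega),u_0)=U$ on all of $[0,T]$. This is where the main subtlety lies, and this is the step I expect to be the principal obstacle: one has to argue that the pathwise integral defining $\tT_1$ splits correctly across the partition points $T_0<T_1<\cdots<T_{i^\ast}$, so that the local fixed-point identities assemble into a global one. Here the key input is the additivity property of the integral pointed out in Remark \ref{nota}, which holds precisely because $U\in\hat W_{0,T}$ is a limit in that space of classical solutions to the approximating smooth equations (Lemma \ref{BW}). For the approximants $U_n$ the identity $\tT(U_n,\omega_n,(\omega_n\otimes_S\omega_n),u_n(0))=U_n$ holds on $[0,T]$ by the smoothness of $\omega_n$, and passing to the limit in $\hat W_{0,T}$ using Lemma \ref{grds_l2b}(ii) (convergence of $\tT$ under convergence of the driving pair) together with the continuity of $\tT$ with respect to its $U$-argument yields $\tT(U,\omega,(\omega\otimes_S\omega),u_0)=U$. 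The Chen–equality for $v$ is automatic from Definition \ref{concatenation} and already built into $\hat W_{0,T}$.

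For uniqueness, I would proceed interval by interval: if $\tilde U$ is any other global solution in $\hat W_{0,T}$, then restricted to $[0,T_0]$ it solves the same fixed-point equation as $U^0$ and must agree with it by Theorem \ref{grds_t2}. In particular $\tilde u(T_0)=u^0(T_0)=u_0^1\in V_\kappa$, so on $[T_0,T_1]$ the restriction lies in $\hat X_{T_0,T_1,u_0^1}$ and Theorem \ref{tex2} forces it to equal $U^1$; iterating through $i=2,\ldots,i^\ast$ yields $\tilde U=U$ on $[0,T]$. Since $u$ is the first component of a mild path–area solution in $\hat W_{0,T}$, it is in particular a global mild solution of \eqref{grds_eq011}, completing the proof.
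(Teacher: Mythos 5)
Your proposal is correct and follows essentially the same route as the paper: local existence on $[0,T_0]$ via Theorem \ref{grds_t2} with the regularity boost $u^0(T_0)\in V_\kappa$, iteration via Theorem \ref{tglobal}, concatenation into $\hat W_{0,T}$ via Lemma \ref{BW}, and verification of the global fixed-point identity by passing to the limit from the classical approximants $U_n=\tT(U_n,\omega_n,(\omega_n\otimes_S\omega_n),u_n(0))$ using Lemma \ref{grds_l2b}. The only cosmetic difference is that the paper delegates uniqueness to \cite{GaLuSch} rather than spelling out the interval-by-interval argument you sketch.
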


\begin{proof} By Theorem \ref{grds_t2}, we obtain the existence of a (local) mild solution $U^0=(u^0,v^0)$ in $\hat W_{0,T_0}$ for some $T_0>0$. A key feature at that point is that the path component evaluated in the final instant belongs to a more regular space, since the solution satisfies that $u^0(T_0) \in V_\kappa$, see Theorem \ref{grds_t2}.

Then taking $u_0^1=u^0(T_0) \in V_\kappa$ as initial data, we can construct $U^i=(u^i, v^i)$, $i=1,\cdots, i^\ast$ according to Theorem \ref{tglobal}, that later on will be all concatenated turning into $U=(u,v)$. This $U$ belongs to $\hat W_{0,T}$ thanks to Lemma \ref{BW}. In order to finish the proof we check that
\begin{equation}\label{grds_eq12}
  U=\tT(U,\omega,(\omega\otimes_S\omega),u_0).
\end{equation}
This relationship follows since by Lemma \ref{grds_l2b} (i)
\begin{align*}
\|\tT&(U,\omega,(\omega\otimes_S\omega),u_0)-\tT(U_n,\omega_n,(\omega_n\otimes_S\omega_n),u_n(0))\|_W\\
 \leq& \|\tT(U,\omega,(\omega\otimes_S\omega),u_0)-\tT(U_n,\omega,(\omega\otimes_S\omega),u_0)\|_W\\
&+\|\tT(U_n,\omega,(\omega\otimes_S\omega),u_0)-\tT(U_n,\omega_n,(\omega_n\otimes_S\omega_n),u_0)\|_W\\
&+\|\tT(U_n,\omega_n,(\omega_n\otimes_S\omega_n),u_0)-\tT(U_n,\omega_n,(\omega_n\otimes_S\omega_n),u_n(0))\|_W\\
\leq& cT^{\beta^\prime}(1+\|U\|_W^2+\|U_n\|_W^2)\|U-U_n\|_W\\
&+\|\tT(U_n,\omega,(\omega\otimes_S\omega),u_0)-\tT(U_n,\omega_n,(\omega_n\otimes_S\omega_n),u_0)\|_W\\
&+|u_0-u_n(0)|.
\end{align*}
From (\ref{conv}) and Lemma \ref{grds_l2b} (ii) we can conclude that the right hand side of the last inequality tends to zero
due to the fact that  $(\|U_n\|_W)_{n\in\NN}$ is bounded. Hence \eqref{grds_eq12} follows from (\ref{conv2}) and (\ref{conv}).
This mild path-area solution is unique in $\hat W_{0,T}$, see \cite{GaLuSch}.
\end{proof}

%%%%%%%%%%%%%%%%%%%%%%%%%%%%%%%

\section{random dynamical systems}\label{RDSs}

In this section we prove that \eqref{grds_eq011} generates a random dynamical system. The fact that a stochastic differential equation generates a random dynamical system allows to use a huge machinery to investigate qualitative properties of such an differential equation. We refer to the monograph by Arnold \cite{Arn98}.

\smallskip

We consider a metric dynamical system $(\Omega,\fF,\PP,\theta)$ where $(\Omega,\fF,\PP)$ is a probability space and $\theta$ is a $\bB(\RR)\otimes\fF,\fF$--measurable flow on $\Omega$, that is:
\begin{equation*}
 \theta_0={\rm id}_\Omega,\qquad \theta_t\theta_s=\theta_{t+s},\, t,s\in \RR.
\end{equation*}

\begin{definition}\label{grds_d1}
Let $V$ be some topological space.
A random dynamical system over the metric dynamical system $(\Omega,\fF,\PP,\theta)$ is a $\bB(\RR^+)\otimes\fF\otimes \bB(V),\bB(V)$--measurable mapping
such that the cocycle property holds
\begin{equation*}
  \phi(t,\omega,x)=\phi(t-\tau,\theta_\tau \omega,\phi(\tau,\omega,u_0)),\qquad \phi(0,\omega,u_0)=u_0,
\end{equation*}
for all $t\ge\tau\in\RR^+$, $u_0\in V$ and $\omega\in\Omega$.
\end{definition}

For our application we will choose for $V$ the Hilbert--space introduced in Section \ref{s1}. A metric dynamical system is a model for a noise and in this paper we are interested in a fractional noise in $V$ of trace class.
Given a probability space and $H\in (0,1)$, a continuous centered Gau{\ss}ian--process
$\beta^H(t)$, $t\in\mathbb{R}$, with the covariance function
\begin{equation*}
\mathbb{E}\beta^H(t)\beta^H(s)=\frac{1}{2}(|t|^{2H}+|s|^{2H}-|t-s|^{2H}),\qquad t,\,s\in\mathbb{R}
\end{equation*}
is called a {\it two--sided one-dimensional fractional Brownian--motion (fBm)}, and $H$ is the Hurst--parameter.
Let $Q$ be a positive symmetric operator of trace class on $V$, i.e.,  ${\rm tr}_V\,Q<\infty$, with positive discrete spectrum $(q_i)_{i\in\NN}$ and eigenelements $(f_i)_{i\in \NN}$.
For simplicity, we assume that $f_i=e_i$.
Then a continuous {\it $V$-valued fractional Brownian--motion $B^H$} with  covariance operator $Q$ and Hurst parameter $H$ is defined by
\begin{equation*}
   B^H(t)=\sum_{i=1}^{\infty} \sqrt{q_i}e_i \beta_i^H(t),\quad t\in\mathbb{R},
\end{equation*}
where $(\beta_i^H(t))_{i\in{\mathbb N}}$ is a sequence of stochastically independent one-dimensional fBm.
It is known that the canonical probability space $(C_0(\RR,V),\bB(C_0(\RR,V)),\PP_H,\theta)$ is a metric dynamical system where $\PP_H$ is the fractional Gau{\ss}--measure with Hurst--parameter $H\in (0,1)$ and determined by $Q$,
and $C_0(\RR,V)$ is the space of continuous paths in $V$ with value zero at zero.
We restrict  this metric dynamical system to the set $\Omega$
of $\beta^{\prime\prime}$--H{\"o}lder--continuous paths on $[-m,m]$ for any $m\in \NN$ and for $1/3<\beta^\prime<\beta^{\prime\prime}<H\le 1/2$, and equip this set with the $\sigma$--algebra $\fF:=\Omega \cap \bB(C_0(\RR,V))$
and restrict $\PP_H$ to this new domain. For $\theta$ we take the Wiener--shift given by \begin{equation*}
  \theta_t\omega(\cdot)=\omega(\cdot+t)-\omega(t),\quad t\in\RR,\,\omega\in\Omega.
\end{equation*}
The set $\Omega \in \bB(C_0(\RR,V))$, it has full measure and it is $\theta$--invariant. Moreover, the quadruple $(\Omega,\fF,\PP_H,\theta)$ is a metric dynamical system. For details we refer to  \cite{ChGGSch12}, \cite{CGSV10} or \cite{GSch11}.

Note that above, in the definition of $\Omega$, we have picked a new parameter $\beta^{\prime\prime}$ such that $1/3<\beta^\prime<\beta^{\prime\prime}<H\le 1/2$. By assumption {\bf(H3)} we can obtain a set $\Omega$ satisfying this regularity condition. The reason to introduce this $\beta^{\prime\prime}$ condition
follows from \cite{GaLuSch} and \cite{GLS14d}.

\smallskip

We would like now to apply the pathwise character of  the  integral given in Section \ref{s2} to prove the existence of a random dynamical system generated by the mild solution to (\ref{grds_eq011}).
For this purpose we have to modify {\bf (H3)}, allowing to treat equations driven by an fBm.

\medskip

Let $C(\Delta,L_2(L_2(V,V_\kappa),V\otimes V))$ be the separable Fr\'echet--space of continuous functions defined on $\Delta=\{(s,t)\in \RR^2: s\le t\}$ with values in $L_2(L_2(V,V_\kappa),V\otimes V)$.

\medskip

{\bf (H$3^\prime$)} There exists a
$\fF,\bB(C(\Delta,L_2(L_2(V,V_\kappa),V\otimes V)))$--measurable random variable $(\omega\otimes_S\omega)$ having the following properties:

\smallskip

(i) There exists a $\theta$--invariant set of full measure $\bar\Omega\in\fF$ such that for any $\tau\in\RR$ we have that
\begin{equation*}
(\omega\otimes_S\omega)(\tau+\cdot,\tau+\cdot)=(\theta_\tau\omega\otimes_S\theta_\tau\omega)(\cdot,\cdot).
\end{equation*}

\smallskip

(ii) For any $m\in\NN$ and $\omega$ contained in a full set $\Omega^\prime\subset \bar\Omega$ we have

$$\lim_{n\to \infty} (\omega_n,(\omega_n\otimes_S\omega_n))=(\omega,(\omega\otimes_S\omega)),\, \text {in }
C_{\beta^\prime}([-m,m];V)\times C_{2\beta^\prime} (\bar \Delta_{-m,m}; L_2(L_2(V,\hat V),V\otimes V)).$$

\smallskip

From now on, outside the invariant set $\Omega^\prime$ we define the elements $(\omega,\,(\omega\otimes_S\omega))\equiv 0$.

\medskip

Note that the first part of this hypothesis holds when considering $\omega$ to be a path of a fBm with $H\in (1/3,1/2]$, see \cite{GLS14d} Theorem 16. In fact, we can prove that there exists a strongly stationary version of $(\omega\otimes_S\omega)$ which is indistinguishable to the version stated in \cite{GLS14d} Theorem 1 with respect to $\bar\Omega$, that we denote by the same symbol, and such that
\begin{equation*}
   (\theta_\tau\omega\otimes_S\theta_\tau\omega)(\cdot,\cdot)=(\omega\otimes_S\omega)(\cdot+\tau,\cdot+\tau).
\end{equation*}
In particular, the $\theta$--invariance of $\bar\Omega$ follows easily by Lederer \cite{Led01} and K{\"u}mmel \cite{Kum14}.\\

By Deya {\it et al.} \cite{DeNeTi10} we have that for any $m\in \NN$ all conditions from {\bf (H3)} hold if we replace the interval $[0,T]$ by  $[-m,m]$
with respect to a $\theta$--invariant set. \\

On the other hand, the relationship of {\bf(H3$^\prime$)} (i) also holds if we replace  $\omega$ by $\omega_n$. Then on a full  set $\Omega^\prime\subset \bar\Omega$ for $\tau\in\RR$ we have that
\begin{equation*}
(\omega_n\otimes_S\omega_n)(\cdot+\tau,\cdot+\tau)-(\omega\otimes_S\omega)(\cdot+\tau,\cdot+\tau)=(\theta_\tau\omega_n\otimes_S\theta_\tau\omega_n)
  -(\theta_\tau\omega\otimes_S\theta_\tau\omega).
\end{equation*}
Hence the left side converges iff the right side converges. But the left side converges to zero on $C^{2\beta^\prime}(\bar\Delta_{-m,m})$ for any $m\in \NN$ iff
\begin{equation*}
(\omega_n\otimes_S\omega_n)(\cdot,\cdot)-(\omega\otimes_S\omega)(\cdot,\cdot)
\end{equation*}
tends to zero.
Thus we have the convergence of the right hand side on the $\theta$-invariant set $\Omega^\prime$ of full measure.

\smallskip

On the other hand, from Theorem \ref{tex2b} we know that for {\em any} $T$ there exists a unique solution $U$ with respect to the domain $[0,T],\Delta_{0,T}$. We may extend this solution to $\RR^+,\Delta_{0,\infty}$. In particular, take $T=T_n=n$, then the pair $(u(t),v(s,t))$ is given by the solution $U$ with respect to $[0,n],\Delta_{0,n}$ where $t<n$.
By the uniqueness result, the restriction of a solution from $[0,T],\Delta_{0,T}$ to $[0,T^\prime],\Delta_{0,T^\prime}$, $T^\prime<T$ is
a solution on the later domain. Hence this extension makes sense.

\begin{theorem}\label{grds_t3}
Assume {\bf (H1)-(H2)} and {\bf (H3$^\prime$)}.  Let $U=(u,v)$ be the solution from Theorem \ref{tex2b} extended to $\RR^+,\Delta_{0,\infty}$ for $\omega\in\Omega^\prime$ with initial condition
$u_0\in V$. Then $u$ generates a random dynamical system on $\RR^+\times \Omega^\prime \times V$.
\end{theorem}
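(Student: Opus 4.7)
To verify that $\phi(t,\omega,u_0):=u(t;\omega,u_0)$ defines a random dynamical system over $(\Omega^\prime,\fF|_{\Omega^\prime},\PP_H|_{\Omega^\prime},\theta)$, I need to establish (a) the joint $\bB(\RR^+)\otimes\fF\otimes\bB(V),\bB(V)$--measurability of $\phi$ and (b) the cocycle identity $\phi(t,\omega,u_0)=\phi(t-\tau,\theta_\tau\omega,\phi(\tau,\omega,u_0))$ for all $0\le\tau\le t$, $u_0\in V$, $\omega\in\Omega^\prime$; the identity $\phi(0,\omega,u_0)=u_0$ is immediate from the fixed point equation $U=\tT(U)$.

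\textbf{Measurability.} The plan is to exploit the approximation picture from Lemma~\ref{BW} and Theorem~\ref{tglobal}. For each $n$ the piecewise--linear noise $\omega_n$ is a measurable function of $\omega$, and likewise $(\omega_n\otimes_S\omega_n)$ is measurable by (H3$^\prime$). The corresponding classical equation has a unique solution $U_n=(u_n,u_n\otimes\omega_n)$ which, because it is obtained as the Banach fixed point of a contraction depending continuously on the triple $(u_0,\omega_n,(\omega_n\otimes_S\omega_n))$ and because successive Picard iterates are jointly measurable in $(t,\omega,u_0)$, is itself jointly measurable. By Theorem~\ref{tex2b}, for every $\omega\in\Omega^\prime$ and every $u_0\in V$ one has $u_n(t,\omega,u_0)\to u(t,\omega,u_0)$ pointwise in $t$. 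Hence $\phi$ is a pointwise limit of jointly measurable maps and is itself jointly measurable.

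\textbf{Cocycle property.} Fix $0<\tau<t$ and $\omega\in\Omega^\prime$, and let $U=(u,v)$ be the global mild path--area solution with initial condition $u_0$. The additivity of the pathwise integral in $\hat W_{0,T}$, emphasised in Remark~\ref{nota} and carried over to the extended domain after concatenation, yields
\begin{equation*}
 u(t)=S(t-\tau)u(\tau)+\int_\tau^tS(t-r)G(u(r))\,d\omega(r).
\end{equation*}
Define the shifted pair $\tilde u(s):=u(s+\tau)$ and $\tilde v(s_1,s_2):=v(s_1+\tau,s_2+\tau)$ for $0\le s_1\le s_2\le t-\tau$. A direct change of variable $r=s+\tau$ inside the fractional definition \eqref{localeq6}, combined with (H3$^\prime$)(i) which transports the area $(\omega\otimes_S\omega)$ consistently to $(\theta_\tau\omega\otimes_S\theta_\tau\omega)$, together with the Chen equality for $v$, gives
\begin{equation*}
 \int_\tau^tS(t-r)G(u(r))\,d\omega(r)=\int_0^{t-\tau}S(t-\tau-s)G(\tilde u(s))\,d(\theta_\tau\omega)(s),
\end{equation*}
and similarly $\tilde v$ is identified as the area component associated with $\tilde u$ and $\theta_\tau\omega$. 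Consequently $(\tilde u,\tilde v)$ is a mild path--area solution on $[0,t-\tau]$ of the same SEE, driven by $\theta_\tau\omega$ and issued from $u(\tau)\in V$. The uniqueness statement in Theorem~\ref{tex2b} then forces $\tilde u(s)=\phi(s,\theta_\tau\omega,\phi(\tau,\omega,u_0))$ for all $s\in[0,t-\tau]$; evaluating at $s=t-\tau$ delivers the cocycle identity.

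\textbf{Main obstacle.} The delicate point is the translation identity for the pathwise integral, since the integral \eqref{localeq6} is built from fractional derivatives on a fixed interval together with an auxiliary area object that is not a classical integral. A natural route is to verify the identity first at the level of the smooth approximations, where the change of variables is elementary because $\omega_n^\prime(r)\,dr=(\theta_\tau\omega_n)^\prime(r-\tau)\,dr$ and $(\omega_n\otimes_S\omega_n)(s+\tau,t+\tau)$ is literally computed from $\theta_\tau\omega_n$ via \eqref{omegaSS}, and then to pass to the limit using (H3$^\prime$)(ii) and the continuity of $\tT$ in its data established in Lemma~\ref{grds_l2b}(ii). The $\theta$--invariance of $\Omega^\prime$ guarantees that the identities obtained are valid on the prescribed invariant set, which is exactly what the definition of a random dynamical system requires.
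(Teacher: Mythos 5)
Your proposal is correct and follows essentially the same route as the paper: additivity of the pathwise integral on $\hat W_{0,T}$, verification of the shift identity first for the smooth approximations $(\omega_n,(\omega_n\otimes_S\omega_n))$ where the change of variables is elementary, passage to the limit via {\bf (H3$^\prime$)} and the convergence built into Theorem \ref{tex2b}, and joint measurability of $\phi$ as a pointwise limit of the jointly measurable classical solutions (the paper additionally records the Castaing--Valadier step combining measurability in $(\omega,u_0)$ with continuity in $t$, and the truncation extending from $[0,T]$ to $\RR^+$). The only cosmetic difference is that you close the cocycle argument by invoking uniqueness of the shifted path--area pair, whereas the paper obtains the identity \eqref{grds_eq8} directly as the limit of \eqref{grds_eq9}; both are valid.
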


\begin{proof}
Since the restriction of $U$ to $[0,T], \Delta_{0,T}$ is in $\hat W_{0,T}$, it follows that the integral $\tT_1(U)$ is additive.
Hence  $U=(u,v)$ restricted to $[\tau,T],\Delta_{\tau,T}$
solves
\begin{equation*}
 U= \tT^\tau(U,\omega,(\omega \otimes_S \omega), \tT_1(U,\omega,(\omega\otimes_S\omega),u_0)(\tau))
\end{equation*}
uniquely for $\tau>0$, $\omega\in\Omega^\prime$ and $u_0\in V$.
Setting
\begin{equation}\label{grds_eq6}
\phi(t,\omega,u_0):=\tT_1(U,\omega,(\omega\otimes_S\omega),u_0)(t)=u(t)
\end{equation}
we obtain the cocycle property if
\begin{equation}\label{grds_eq8}
\tT_1(U,\omega,(\omega\otimes_S\omega),u_0)(t)=\tT_1(U_\tau,\theta_\tau \omega,(\theta_\tau\omega\otimes_S\theta_\tau\omega),u(\tau))(t-\tau)
\end{equation}
where $U_\tau(s,t)=(u_\tau,v_\tau)(s,t)=(u(\tau+t), v(\tau+s,\tau+t))$. Note that because $u,\,v $ and $\omega$ are connected by the Chen property (\ref{chen}), the same holds for $u_\tau,\,v_\tau$ and $\theta_\tau \omega$.

For the classical evolution equation \eqref{grds_eq10}  we get
\begin{align}\label{grds_eq9}
\begin{split}
 & \tT_1((u_n(\cdot),(u_n\otimes\omega_n)(\cdot,\cdot)),\omega_n,(\omega_n\otimes_S\omega_n), u_n(0))(t)\\
&=\tT_1^\tau((u_n(\cdot),(u_n\otimes\omega_n)(\cdot,\cdot)),\omega_n,(\omega_n\otimes_S\omega_n),u_n(\tau))(t-\tau)\\
&=\tT_1((u_n(\cdot+\tau),(u_n\otimes\omega_n)(\cdot+\tau,\cdot+\tau)),\theta_\tau\omega_n,(\theta_\tau\omega_n\otimes_S\theta_\tau\omega_n),u_n(\tau))(t-\tau)\\
&=\tT_1((u_n(\cdot+\tau),(u_n(\cdot+\tau)\otimes\theta_\tau\omega_n)(\cdot,\cdot)),\theta_\tau\omega_n,(\theta_\tau\omega_n\otimes_S\theta_\tau\omega_n),u_n(\tau))(t-\tau)
\end{split}
\end{align}
for every $\omega\in\Omega^\prime$ because
\begin{equation*}
 (\omega_n)^\prime(\cdot+\tau)=(\theta_\tau\omega_n)^\prime(\cdot).
\end{equation*}

Note that in the last formulas we could omit $(\theta_\tau\omega_n\otimes_S\theta_\tau\omega_n)$ because the classical solutions are independent of this term. Moreover, above we have used that the $\tT_2$-component given by $(u_n\otimes\omega_n)$ (see \eqref{grds_eq3b}) has the property
\begin{equation*}
(u_n\otimes\omega_n)(\cdot+\tau,\cdot+\tau)=(u_n(\cdot+\tau)\otimes\theta_\tau\omega_n)(\cdot,\cdot).
\end{equation*}

Therefore, since $U\in \hat W_{0,T}$, the convergence conclusion of Theorem \ref{tex2b}, which holds for every $\omega\in\Omega^\prime$ independently of $u_0$, applied to the left and right side of \eqref{grds_eq9} yields \eqref{grds_eq8}, and therefore the cocycle property is established for $\phi$.

\smallskip

Now we deal with the measurability of $\phi$. Consider \eqref{grds_eq6} replacing $(\omega,(\omega\otimes_S\omega))$ by $(\omega_n,(\omega_n\otimes_S\omega_n))$.
The mapping $\omega\mapsto\omega_n$ is $\fF,\fF$-measurable for every $n\in\NN$ and hence the mapping $\omega\mapsto (\omega_n\otimes_S\omega_n)$ is
$\fF, \bB(C(\Delta,L_2(L_2(V,V_\kappa),V\otimes V))$--measurable. Let $U_n=(u_n,v_n)$ be the solution for parameters $(\omega_n,(\omega_n\otimes_S\omega_n))$. Then
$u_n$ is the classical solution to \eqref{grds_eq011} (and $v_n=(u_n\otimes\omega_n)$). Hence for the first component of $U_n$ we have that
\begin{equation*}
[0,T]\times \Omega^\prime \times V\ni  (t,\omega,u_0)\mapsto u_n(t)\in V
\end{equation*}
is measurable with respect to $\bB([0,T])\otimes\fF\otimes\bB(V),\bB(V)$. However from (\ref{conv}) that
\begin{equation*}
  \hat W_{0,T}-\lim_{n\to\infty} U_n=U
\end{equation*}
and hence for the first components of $U$ and $U_n$ we have for every $t\in [0,T]$
\begin{equation*}
  \lim_{n\to\infty} u_n(t)=u(t)
\end{equation*}
which gives the measurability of $u(t)$ with respect to $\fF\otimes \bB(V)$. By the fact that $u(t)$ is continuous in $t$, see Castaing and Valadier, Chapter III \cite{CasVal77}, we obtain by \eqref{grds_eq6} the $\bB([0,T])\otimes\fF\otimes\bB(V),\bB(V)$--measurability, that is, measurability restricted to $[0,T]$. Considering an increasing
sequence $(T_n)_{\in \NN},\,T_n>0$ with limit $\infty$ and setting
\begin{equation*}
  u_n(t)=\left\{
  \begin{array}{lcr}
  u(t)&:& t\le T_n\\
  u(T_n)&:& t\ge T_n,
  \end{array}
  \right.
\end{equation*}
then $u_n$ is $\bB(\RR^+)\otimes\fF\otimes\bB(V),\bB(V)$--measurable and for any $(t,\omega,u_0)$ the mapping $u_n$ converges to the first component
$u$ of the extended solution $U$. Hence $\phi$ is $\bB(\RR^+)\otimes\fF\otimes\bB(V),\bB(V)$--measurable.

\end{proof}

\begin{remark}
There is another way to prove that the path--area solution generates a random dynamical system which is based on Kager and Scheutzow \cite{KagScheu97}.
In particular \eqref{grds_eq9} gives us the random flow property for the $u$--component of the path--area solution. The conditions of \cite{KagScheu97} Theorem 4 to generate a cocycle from a (semi)flow are fulfilled. The measurability condition {\bf (iii')} in that article follows (for instance) because $\tT_1^\tau$ is a pointwise limit of the classical solution having these measurability properties. {\bf (v')} is due to the continuity of $t\mapsto \tT_1^\tau(t)$ and {\bf (iv')} follows from the additivity of the pathwise integral and the continuous dependence of $\tT$ on the initial condition.
\end{remark}

\section{Example}

Let us assume that $A$ is generated by the Laplacian on $\mathcal O =(0,1)$ with  homogenous Dirichlet boundary condition. $A$ with domain $D(-A)=H^2(\oO)\cap H_0^1(\oO)$ generates a semigroup in $L_2(\oO)$ .
Let $\rho=1/4+\eps$, $\eps>0$, small. Then $V:=D((-A)^\rho)$ consists of the Slobodetski spaces $H^{2\rho}(\mathcal O)$
satisfying the homogeneous boundary conditions, see Da Prato and Zabczyk \cite{DaPrato}, Page 401. In particular, the continuous embedding $V\subset C(\bar\oO)$ holds. In what follows we consider the restriction of the semigroup $S$ to the space $V$. Note that the inequalities \eqref{eq1} and \eqref{eq2} continue being true, and that $(\lambda_i^{-\rho}e_i)_{i\in \NN}$ is an    orthonormal basis of $V$ where  $(\lambda_i)_{i\in\NN}$
is the spectrum of $A$ and $(e_i)_{i\in \NN}$ are the associated eigenelements of $A$ with respect to $L_2(\oO)$, which are uniformly bounded in $L_\infty(\mathcal O)$.
The asymptotical behavior of the spectrum is given by $\lambda_i\sim i^2$.

\begin{lemma}
For $\mu\in (1,5/4)$
\begin{equation*}
  D((-A)^{\mu})= H^{2\mu}(\mathcal O)\cap H_0^1(\oO).
\end{equation*}
\end{lemma}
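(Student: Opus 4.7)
The plan is to reduce the case $\mu \in (1, 5/4)$ to the known identification of $D((-A)^s)$ for $s \in (0, 1/4)$ by factoring $(-A)^\mu = (-A)^{\mu-1}(-A)$. Setting $s := \mu - 1 \in (0, 1/4)$, a function $u$ lies in $D((-A)^\mu)$ precisely when $u \in D(-A)$ and $-Au \in D((-A)^s)$, as follows from the spectral theorem applied to the positive self-adjoint operator $-A$.

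First I would invoke the standard identification (Lions--Magenes, or Da Prato--Zabczyk \cite{DaPrato} Appendix~B) in the lower range:
\begin{equation*}
D((-A)^s) = H^{2s}(\mathcal{O}) \quad \text{for } 0 \le s < 1/4,
\end{equation*}
with no boundary condition imposed, since traces are not well defined at this level of regularity. The upper-range analogue $D((-A)^s) = H^{2s}(\mathcal{O}) \cap H_0^1(\mathcal{O})$ for $1/4 < s \le 1$ is already used in the excerpt to describe $V$, and yields in particular $D(-A) = H^2(\mathcal{O}) \cap H_0^1(\mathcal{O})$.

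Combining the factorization with these two identifications, $u \in D((-A)^\mu)$ is equivalent to
\begin{equation*}
u \in H^2(\mathcal{O}) \cap H_0^1(\mathcal{O}) \quad \text{and} \quad u'' \in H^{2\mu-2}(\mathcal{O}).
\end{equation*}
It then remains to check that this set coincides with $H^{2\mu}(\mathcal{O}) \cap H_0^1(\mathcal{O})$. This rests on the one-dimensional Slobodeckij identity that a function on the bounded interval $\mathcal{O}=(0,1)$ belongs to $H^{2\mu}$ iff $u \in H^2$ and $u'' \in H^{2\mu-2}$, which one proves by applying the difference-quotient characterization of the $H^{r}$-seminorm to $u''$ with $r = 2\mu-2 \in (0,1/2)$.

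The main obstacle will be this last equivalence, together with the observation that no additional boundary condition on $u''$ has to be imposed. Since $2\mu - 2 \in (0, 1/2)$, traces of $u''$ at $0$ and $1$ are not defined as $L_2$ objects, so the answer is $H^{2\mu}(\mathcal{O}) \cap H_0^1(\mathcal{O})$ and not $H^{2\mu}(\mathcal{O}) \cap H_0^1(\mathcal{O})$ augmented with $u''(0) = u''(1) = 0$; that extra condition only becomes relevant once $\mu > 5/4$, which is precisely the endpoint at which the reduction argument breaks down.
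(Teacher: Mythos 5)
Your proposal is correct and follows essentially the same route as the paper: both factor $(-A)^{\mu}=(-A)^{\mu-1}(-A)$, use that $D((-A)^{\mu-1})=H^{2\mu-2}(\mathcal O)$ carries no boundary condition because $\mu-1\in(0,1/4)$, and then identify $\{u\in D(-A): -Au\in H^{2\mu-2}(\mathcal O)\}$ with $H^{2\mu}(\mathcal O)\cap H_0^1(\mathcal O)$. The only difference is cosmetic: the paper closes the last step by citing that the Dirichlet Laplacian is an isomorphism onto $H^{2\mu-2}(\mathcal O)$ (Egorov--Shubin), whereas you propose to verify the equivalent one-dimensional Slobodeckij characterization directly by difference quotients.
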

\begin{proof}
On $H^{2\mu}(\mathcal O)\cap H_0^1(\oO)$ we know that $A=\Delta_{HDBC}$  which is an isomorphism with range $H^{2\mu-2}(\mathcal O)$,
see Egorov and Shubin \cite{egorov}, Page 124. In addition, $(-A)^{\mu-1}$ has the domain $H^{2\mu-2}(\mathcal O)$ if $\mu\in (1,5/4)$,  see  \cite{DaPrato}, Page 401.
\end{proof}
Now for $1/3<\beta^\prime<1/2$ we take
\begin{equation*}
  \gamma>1-\beta^\prime\,\qquad\kappa=\gamma+\rho< 1.
\end{equation*}
That choice of $\gamma$ and $\kappa$ ensures that we can define $(\omega\otimes_S\omega)$ as in \cite{GLS14d} Theorem 1 (see also Lemma 3 of that paper). In addition, we can ensure the assumptions of the global existence Theorem 18 since the above choice implies $\kappa+\beta^\prime>1$.

Let $g$ be a four times differentiable  function on $\bar{\mathcal O}\times \RR$  which is zero on $\{0,1\}\times \RR$, such that all the corresponding derivatives ($g$ itself  included) are bounded. Define

\begin{equation*}
    G(u)(v)[x]=\int_{\mathcal O} g(x,u(y))v(y)dy\quad \text{for }u,\,v\in V.
\end{equation*}
Following Kantorowitsch and Akilow \cite{KA} Section XVII.3  it is not hard to prove that  $G$ is  three times continuously differentiable where the derivatives are given by
\begin{align*}
    DG(u)(v,h_1)[x]&=\int_{\mathcal O}D_2g(x,u)v(y)h_1(y)dy,\\
    D^2G(u)(v,h_1,h_2)[x]&=\int_{\mathcal O}D_2^2g(x,u)v(y)h_1(y)h_2(y)dy,\\
     D^3G(u)(v,h_1,h_2,h_3)[x]&=\int_{\mathcal O}D_2^3g(x,u)v(y)h_1(y)h_2(y)h_3(y)dy,
\end{align*}
for $v,\,h_1,\,h_2,\,h_3\in V$. For $\mu\in (1,5/4)$ we obtain that  $G(u)(v),\, DG(u)(v,h_1)$, $ D^2G(u)(v,h_1,h_2),$ $\,D^3G(u)(v,h_1,h_2,h_3) \in H^3(\oO)\cap H^1_0(\oO)\subset D((-A)^{\mu})\subset V_\kappa:=$ $D((-A)^{\rho+\kappa})= D((-A)^{2\rho+\gamma})$ since with the choice we have done we have $2\rho+\gamma \in (1,5/4)$. Let us check, for instance, that $D^3G(u)(v,h_1,h_2,h_3) \in D((-A)^\mu)\subset V_\kappa$. By the
continuous embedding theorem we have that
\begin{align*}
     \int_{\mathcal O}&\bigg|\int_{\mathcal O}D^2_2D_1^kg(x,u(y)+h_3(y))v(y)h_1(y)h_2(y)-D^2_2D_1^kg(x,u(y))v(y)h_1(y)h_2(y)\\
&-D_2D_2^2 D_1^k g(x,u(y))v(y)h_1(y)h_2(y)h_3(y)dy\bigg|^2dx\le c\bigg(\int_{\mathcal O}|v(y)h_1(y)h_2(y)||h_3(y)|^2dy\bigg)^2\\
&\le c^\prime|v|_{C}^2|h_1|_{C}^2|h_2|_{C}^2|h_3|_{C}^4\le c^{\prime\prime} |v|^2|h_1|^2|h_2|^2|h_3|^4\quad\text{for } k=1,2,3,
\end{align*}
where $c^\prime$ is a uniform bound for $|D_2^4D_1^kg(x,u)|^2|\oO|$.
This gives the differentiability of $D^2G(u)$ in $H^3(\oO)\cap H_0^1(\oO)$ and since
$G(u)(v)[x],\cdots,D^3G(u)(v,h_1,h_2,h_3)[x]$ are zero for $x\in\{0,1\}$ we have differentiability in $D((-A)^\mu)$ too.

The Hilbert-Schmidt property of $DG(u)$ follows by
\begin{equation*}
    \sup_{k=1,2,3} \sum_{i,j}\int_{\mathcal O}\bigg(\int_{\mathcal O}|D_2D_1^k g(x,u(y))\lambda_i^{-\rho}e_{i}(y)\lambda_j^{-\rho}e_{j}(y)dy\bigg)^2dx<c(\sum_i \lambda_i^{-2\rho})^2<\infty,
\end{equation*}
due to the  boundedness of $D_2 D_1^k g$ and the uniform boundedness of $(e_i)_{i\in\NN}$ in $L_\infty(\oO)$. In the same manner we obtain that the other derivatives are Hilbert--Schmidt operators.

\medskip

These estimates allow us to apply Theorem \ref{tex2b} to SEEs that have the above kernel integral diffusion operator. For a different example of diffusion we refer the reader to \cite{GaLuSch}.


\begin{thebibliography}{99}

\bibitem{Arn98}
L. Arnold.
\newblock {\em Random dynamical systems}.
\newblock Springer Monographs in Mathematics. Springer-Verlag, Berlin, 1998.


\bibitem{CGSV10} T. Caraballo, M. J. Garrido-Atienza, B. Schmalfuss and J. Valero, Asymptotic behavior of a
stochastic semilinear dissipative functional equation without uniqueness of solutions, {\em Discrete
and Continuous Dynamical Systems}, Series B, 14 (2010), 439--455.


\bibitem{CasVal77}
C.~Castaing and M.~Valadier.
\newblock {\em Convex analysis and measurable multifunctions}.
\newblock Springer-Verlag, Berlin, 1977.
\newblock Lecture Notes in Mathematics, Vol. 580.

\bibitem{ChGGSch12}
{ Y. Chen, H. Gao, M.~J. Garrido-Atienza and B. Schmalfuss}, {\em Pathwise solutions of SPDEs and random dynamical systems}, {\em Discrete and continuous dynamical systems, Series A}, 34 (2014), no. 1, 79--98.




\bibitem {DaPrato}G. Da Prato, J. Zabczyk, {\it Stochastic
equations in infinite dimensions}, Cambridge University Press, Cambridge, 1992.

\bibitem{DGT} A. Deya, M. Gubinelli, S. Tindel, Non-linear rough heat equations, \newblock {\em Probab. Theory Relat. Fields}, 153 (2012), 97-147.

\bibitem{DeNeTi10} A. Deya, A. Neuenkirch, S. Tindel,
A Milstein-type scheme without L\'evy area terms for SDES driven by fractional Brownian motion,
\newblock {\em Ann. Inst. H. Poincar\'e Probab. Statist.}, 48 (2012), no. 2, 518-550.



\bibitem{egorov} Y.V. Egorov and M.A. Shubin.
{\em Foundations of the Classical Theory of Partial Differential Equations}, Encyclopaedia of Mathematical Sciences, 30. Springer, 1998.





\bibitem{GLS12note} M. J. Garrido-Atienza, K. Lu and B. Schmalfu{ss}, Compensated Fractional Derivatives and Stochastic Evolution Equations, {\it Comptes Rendus Math\'ematique,}  350 (2012), no. 23--24, 1037--1042.

\bibitem{GLS14d} M. J. Garrido-Atienza, K. Lu and B. Schmalfu{ss}. L\'evy areas of Ornstein--Uhlenbeck processes in Hilbert spaces.
{\it ArXiv:1411.4765}.

\bibitem{GaLuSch} M. J. Garrido-Atienza, K. Lu and B. Schmalfu{ss}. Local pathwise solutions to stochastic evolution equations driven by fractional Brownian motions with Hurst parameters $H\in (1/3,1/2]$.
{\it ArXiv:1411.5237}.


\bibitem{GSch11}  M. J. Garrido-Atienza and B. Schmalfu{ss}. Ergodicity of the Infinite Dimensional Fractional
Brownian Motion. {\it J. Dyn. Diff. Equat.} 23 (2011), 671--681.

\bibitem{HuNu09} Y. Hu, D. Nualart.
\newblock Rough path analysis via fractional calculus. {\em Trans. Amer. Math. Soc.}, 361 (2009), no. 5, 2689--2718.

\bibitem{Led01} C. Lederer,
\newblock {\em Konjugation stochastischer und zuf\"alliger station\"arer Differentialgleichungen und eine Version des lokalen Satzes von Hartman-Grobman
f\"ur stochastische Differentialgleichungen.}
\newblock{PhD Thesis.}
\newblock{HU Berlin, 2001.}

\bibitem{KadRin97}
R.V. Kadison and J.R. Ringrose.
\newblock {\em Fundamentals of the Theory of Operator Algebras: Elementary theory}.
\newblock Graduate Studies in mathematics, AMS, 1997.

\bibitem{KagScheu97}
G. Kager and M. Scheutzow.
\newblock
Generation of one-sided random dynamical systems by stochastic differential equations.
{\em  Electron. J. Probab.}, 2 (1997), no. 8, 1--17.

\bibitem{KA}
L.W. Kantorowitsch and  G. P. Akilow.
\newblock {\em Funktionalanalysis in normierten {R}\"aumen},
\newblock Verlag Harri Deutsch, 1978.

\bibitem{Kum14}
K.~K{\"u}mmel.
\newblock {\em Dynamics of Marcus type SDEs.}
\newblock In preparation.

\bibitem{Kun90}
{H.~Kunita}, {\em Stochastic flows and stochastic differential equations},
Cambridge University Press, 1990.





\bibitem{MasNua03}
B. Maslowski and D. Nualart.
\newblock Evolution equations driven by a fractional {B}rownian motion.
\newblock {\em J. Funct. Anal.}, 202 (2003), no. 1, 277--305.



\bibitem{NuaRas02}
D. Nualart and A. R{\u{a}}{\c{s}}canu.
\newblock Differential equations driven by fractional {B}rownian motion.
\newblock {\em Collect. Math.}, 53 (2002), no. 1, 55--81.

\bibitem{Pazy}
A. Pazy.
\newblock {\em Semigroups of Linear Operators and Applications to Partial Differential Equations}.
\newblock Springer Applied Mathematical Series. Springer-Verlag, Berlin, 1983.

\bibitem{Samko}
S.G. Samko, A.A. Kilbas and O.I. Marichev.
\newblock {\em Fractional integrals and derivatives: Theory and applications}.
\newblock Gordon and Breach Science Publishers (Switzerland and Philadelphia, Pa., USA), 1993.


\bibitem{Sche96}
M. Scheutzow.
\newblock On the perfection of crude cocycles.
{\it Random Comput. Dynam.} 4 (1996), no. 4, 235--255.

\bibitem{Sohr}
H. Sohr.
\newblock {\em The Navier-Stokes equations. An elementary functional analytic approach}.
\newblock Birkh\"auser Advances Texts, Birkh\"auser Verlag, Basel-Boston-Berlin, 2001.





\bibitem{You36}
{L.C. Young},  An integration of H{\"o}der type, connected with Stieltjes integration, {\it Acta Math.}, 67 (1936), 251--282.

\bibitem{Zah98}
M.~Z{\"a}hle.
\newblock Integration with respect to fractal functions and stochastic
  calculus. {I}.
\newblock {\em Probab. Theory Related Fields}, 111 (1998), no. 3, 333--374.

\bibitem{Zei85}
E. Zeidler.
\newblock {\em Non-linear Functional Analysis}.
\newblock Springer--Verlag, New York, Berlin, Heidelberg Tokyo, 1985.

\end{thebibliography}
\end{document}